\pgfplotsset{small,compat=newest}
\numberwithin{equation}{section}
\newtheorem{thm}{Theorem}[section]
\newtheorem{lem}[thm]{Lemma}
\newtheorem{asp}[thm]{Assumption}
\theoremstyle{definition}
\newtheorem{dfn}[thm]{Definition}
\newtheorem{example}[thm]{Example}
\theoremstyle{remark}
\numberwithin{equation}{section}
\def\ie{\emph{i.e.}}
\def\fall{\mbox{for all }}
\def\feac{\mbox{for each }}
\def\dive{\mathrm{div}}
\def\spn{\mathrm{span}}
\def\argmin{\mathrm{argmin}}
\def\ms{\mathrm{ms}}
\def\aux{\mathrm{aux}}
\def\E{\mathrm{E}}
\def\RK{\mathrm{RK}}
\def\Or{{\cal O}}
\def\Om{\Omega}
\def\pOm{\partial\Om}
\def\oi{\omega_{i}}
\def\Ki{K_{i}}
\def\Kim{K_{i,m}}
\def\lij{\lambda_{j}^{(i)}}
\def\pij{\varphi_{j}^{(i)}}
\def\psijms{\psi^{(i)}_{j,\ms}}
\def\k{\kappa}
\def\wk{\widetilde{\k}}
\def\gd{\nabla}
\def\p{\partial}
\def\pt{\p_{t}}
\def\ptt{\p_{tt}}
\def\R{\mathbb{R}}
\def\Rd{\R^{d}}
\def\L{\mathrm{L}}
\def\H{\mathrm{H}}
\def\V{\mathrm{V}}
\def\Vz{\V_{0}}
\def\nV{\widetilde{\V}}
\def\Ho{\H^{1}(\Om)}
\def\HoKi{\H^{1}(\Ki)}
\def\HoKim{\H^{1}(\Kim)}
\def\HozKim{\H^{1}_{0}(\Kim)}
\def\Vh{\V_{h}}
\def\Th{{\cal T}^{h}}
\def\TH{{\cal T}^{H}}
\def\Vax{\V_{\aux}}
\def\Viax{\V_{\aux}^{(i)}}
\def\Vms{\V_{\ms}}
\def\x{\mathrm{x}}
\def\Nc{N_{\mathrm{c}}}
\def\Nm{{\cal N}^{v}}
\def\Nf{N_{f}}
\def\Nt{N_{t}}
\def\dt{\delta^{n}}
\def\uh{u_{h}}
\def\vh{v_{h}}
\def\uhn{u_{h}^{n}}
\def\uhnm{u_{h}^{n-1}}
\def\ums{u_{\ms}}
\def\umsn{\ums^{n}}
\def\umsnm{\ums^{n-1}}
\def\pu{\hat{u}}
\def\pw{\hat{w}}
\def\Am{A}
\def\Mm{M}
\def\Amz{\Am_{0}}
\def\Mmz{\Mm_{0}}
\def\Rmz{R_{0}}
\def\Nm{N}
\def\Lms{\Nm_{\ms}}
\def\Im{I}
\def\Lh{L_{h}}
\def\Ph{P_{h}}
\def\Lms{L_{\ms}}
\def\Phf{\Ph f}
\def\Pms{P_{\ms}}
\def\Pmsf{\Pms f}
\def\Bv{B}
\def\Fv{F}
\def\Fvz{\Fv_{0}}
\def\tu{\hat{u}_{h}}
\def\uhn{\uh^{n}}
\def\uhnm{\uh^{n-1}}
\def\uhnRKo{u^{n}_{\E\RK1}}
\def\uhnRKt{u^{n}_{\E\RK22}}
\def\uhnmsro{u^{n}_{\ms,\E\RK1}}
\def\uhnmsrt{u^{n}_{\ms,\E\RK22}}
\def\uhz{\uh^{0}}
\def\Rv{r}
\def\uhms{u_{\ms}}
\def\uhnms{\uhms^{n}}
\def\uhnmms{\uhms^{n-1}}
\def\en{\varepsilon^{n}}
\def\enm{\varepsilon^{n-1}}
\title{A second-order exponential integration constraint energy minimizing generalized multiscale method for parabolic problems}
\author{Leonardo A. Poveda\thanks{Department of Mathematics, The Chinese University of Hong Kong, {\tt lpoveda@math.cuhk.de.hk}} \and Juan Galvis\thanks{Departamento de Matem\'aticas, Universidad Nacional de Colombia,  {\tt jcgalvisa@unal.edu.co} } \and Eric T. Chung\thanks{Department of Mathematics, The Chinese University of Hong Kong, {\tt eric.t.chung@cuhk.edu.hk}}}
\date{}
\begin{document}
\maketitle
\begin{abstract}
This paper investigates an efficient exponential integrator generalized multiscale finite element method for solving a class of time-evolving partial differential equations in bounded domains. The proposed method first performs the spatial discretization of the model problem using constraint energy minimizing generalized multiscale finite element method (CEM-GMsFEM). This approach consists of two stages. First, the auxiliary space is constructed by solving local spectral problems, where the basis functions corresponding to small eigenvalues are captured. The multiscale basis functions are obtained in the second stage using the auxiliary space by solving local energy minimization problems over the oversampling domains.  The basis functions have exponential decay outside the corresponding local oversampling regions. We shall consider the first and second-order explicit exponential Runge-Kutta approach for temporal discretization and to build a fully discrete numerical solution. The exponential integration strategy for the time variable allows us to take full advantage of the CEM-GMsFEM as it enables larger time steps due to its stability properties. We derive the error estimates in the energy norm under the regularity assumption. Finally, we will provide some numerical experiments to sustain the efficiency of the proposed method.
\end{abstract}

\section{Introduction}

In recent decades, the scientific community has devoted considerable effort to developing efficient numerical methods to approximate nonlinear and semilinear parabolic partial differential equations in high-contrast media, which arise from various practical problems. In general, it is hard to find the exact solutions of this kind of model; numerical approaches are currently the essential tools to approximate these solutions. As well-known, these approximations are made at two stages. First, high-contrast ratios require fine-scale meshes in spatial discretization, drastically increasing the degrees of freedom and producing inadequate and inefficient computational costs. On the other hand, high-contrast ratios frequently deteriorate the convergence of the fine-scale approximation, which becomes a common challenge. There have been many existing approaches to handle high-contrast problems, which are usually referred to as numerical upscaling methods, multiscale finite element methods, variational multiscale methods, heterogeneous multiscale methods, mortar multiscale methods, localized orthogonal decomposition methods (LOD),  generalized multiscale finite element methods (GMsFEM), and so on in the literature \cite{Hou1997multiscale, Hughes1998variational,Efendiev2013generalized,Arbogast2013multiscale,chung2023multiscale}. The proposed method in this work belongs to the class of generalized multiscale finite element methods, where the objective is to seek multiscale basis functions to represent the local heterogeneities by solving local constraint-minimizing problems. This approach was initially proposed by \cite{chung2018constraint} and has widely been applied to many applications such as in \cite{li2019constraint,fu2020constraint,chung2020computational,wang2021constraint,poveda2023convergence}. In particular, CEM-GMsFEM can be divided into two steps. First, this method constructs auxiliary basis functions via local spectral problems. Then, constraint energy minimization problems are solved to obtain the required multiscale basis functions. These basis functions are shown to decay exponentially from the target coarse block and can be computed locally.

In the second stage, explicit, semi-implicit, and fully-implicit schemes are frequently used for temporal discretization. These latter are unconditionally stable in contrast with explicit methods, which could be easier to compute but require certain constraints in the time step size. Nevertheless, in implicit schemes, we need to solve non-linear equations at each time step using some iterative method, which can be a bottleneck
in computations. In recent years, alternative techniques have emerged to solve the corresponding nonlinearity. Among the most outstanding, the so-called Exponential Integrators are a good candidate for this purpose. These schemes are robust time-stepping methods that do not need the solution of large linear systems \cite{hochbruck1998exponential,cox2002exponential,hochbruck2010exponential}. Instead, they solve explicitly the problem using the matrix exponential computation for each time step. In literature, there exist several types of methods such as exponential Runge-Kutta methods \citep{hochbruck2005exponential,hochbruck2005explicit}, exponential Rosenbrock methods \citep{hochbruck2009exponential,caliari2009implementation}, exponential multistep method \citep{hochbruck2011exponential}, exponential splitting methods \citep{hansen2009exponential} and Lawson methods \citep{lawson1967generalized}. 

This paper is mainly motivated by \cite{contreras2023exponential,huang2023efficient}. We design and analyze an exponential integration CEM-GMsFEM for semilinear parabolic problems in high-contrast multiscale media. We present a rigorous convergence analysis, which has been lacking so far. The proof is provided under some assumption on the nonlinear reaction term and exact solution.

The remainder of this paper is as follows. Section~\ref{sec:prob-setup} describes the problem and its spatial discretization. The construction of CEM-GMsFEM basis functions using constraint energy minimization is given in Section~\ref{sec:basis}. The multiscale basis functions are constructed by solving a class of local spectral problems and constrained minimization problems. In Section~\ref{sec:expo-inte}, we present the explicit exponential Runge-Kutta method. Under appropriate assumptions of the exact solution and nonlinear reaction term, we show the fully discrete error analysis in Appendix~\ref{sec:convergence}. Numerical experiments are presented in Section~\ref{sec:numerical}. Finally,  conclusions and final remarks are drawn in Section~\ref{sec:conclusion}.

\section{Problem setup}
\label{sec:prob-setup}
In this section, we study the numerical solution of the semilinear parabolic problem by taking the following form:
\begin{equation}
\label{eq:strong-prob}
\left\{\begin{split}
\pt u -\dive(\k(\x)\gd u) &= f(u),\quad\mbox{in }\Om\times[0,T],\\
u(\x,t) &= 0,\quad\mbox{on }\pOm\times[0,T],\\
u(\x,t=0) &= \hat{u},\quad\mbox{on }\Om,
\end{split}\right.
\end{equation}
where $\Om$ is an open domain in $\Rd\,(d=2,3)$ with a boundary defined by $\pOm$, $u(t,\x)$ is the unknown function, $\k$ denotes the high-contrast multiscale field, such that $\k_{0}\leq \k\leq \k_{1}$, where $0<\k_{0}<\k_{1}<\infty$ and $f(u)$ is the nonlinear reaction term of the underlying system and explicitly independent of time. For simplicity of presentation, we present the parabolic problems subject to homogeneous Dirichlet boundary conditions. However, we cite to  \cite{ye2023constraint} for a detailed analysis of CEM-GMsFEM for high-contrast elliptic problems with inhomogeneous boundary conditions.

We briefly present the notation of the function spaces and norms to be used throughout this paper. We denote by $\H^{m}(D)$, with $m\geq 0$ the Sobolev spaces on subdomain $D\subset\Om$ equipped with the norm $\|\cdot\|_{m,D}$. If $D=\Om$, we will omit the index $D$. We denote by $\|\cdot\|_{0,D}$ the norm associated with the inner product $(\cdot,\cdot)$ in the space $\L^{2}(D)$. We further set $\H^{1}_{0}(D)$ the subspace of $\H^{1}(D)$. We will use the notation $x\preceq y$ to show us that there exists a positive constant $C$ independent of the grid size, such as $x\leq Cy$.

\subsection{A semi-discretization by finite element grid approximation}
In this subsection, we introduce the notions of fine and coarse grids to discretize the problem \eqref{eq:strong-prob}. Let $\TH$ be a usual conforming partition of the computational domain $\Om$ into coarse block $K\in\TH$ with diameter $H$. Then, we denote this partition as the coarse grid and assume each coarse element is partitioned into a connected union of fine-grid blocks. In this case, the fine grid partition will be denoted by $\Th$ and is, by definition, a refinement of the coarse grid $\TH$, such that $h\ll H$. We shall denote $\{\x_{i}\}_{i=1}^{\Nc}$ as the vertices of the coarse grid $\TH$, where $\Nc$ denotes the number of coarse nodes. We define the neighborhood of the node $\x_{i}$ by 
\[
\oi=\bigcup\{K_{j}\in\TH:\x_{i}\in\overline{K}_{j}\}.
\]
In addition, for CEM-GMsFEM considered in this paper, we have that given a coarse block $\Ki$, we represent the oversampling region $\Kim\subset\Om$ obtained by enlarging $\Ki$ with $m\geq1$ coarse grid layers, see Fig.~\ref{fig:grid}.
\begin{figure}[h!]
\centering
\begin{tikzpicture}[scale=4.7]
\draw[step=0.02cm,color=Gray, line width = 0.01mm,opacity=0.3] (0,0) grid (1,1);
\draw[step=0.1cm,color=Black,line width = 0.2mm] (0,0) grid (1,1);
\draw [draw=MidnightBlue, line width=1.5pt, fill=Blue, opacity=0.50] (0.1,0.1) rectangle (0.6,0.6);
\draw [draw=Sepia, line width=0.02mm, fill=Mulberry, opacity=0.50] (0.04,0.94) rectangle (0.06,0.96);
\draw [draw=MidnightBlue, line width=1.5pt, fill=Green, opacity=0.50] (0.6,0.7) rectangle (0.8,0.9);
\node at (-0.09,0.35) {$K_{i,2}$};
\draw [draw=black, line width=0.3mm, fill=Yellow, opacity=0.6] (0.3,0.3) rectangle (0.4,0.4);
\node at (0.35,0.35) {\tiny{$\Ki$}};
\node at (-0.3,0.95) {\small{\mbox{fine element}}};
\filldraw (0.7,0.8) circle (0.3pt);
\node at (0.73,0.83) {\small{$\x_{i}$}};
\node at (1.10,0.8) {$\oi$};
\node at (0.04,1.05) {\large{$\Om$}};
\coordinate (A) at (-0.1,0.3);
\coordinate (B) at (-0.1,0.41);
\coordinate (C) at (0.1,0.1);
\coordinate (D) at (0.1,0.6);
\coordinate (E) at (0.05,0.95);
\coordinate (F) at (-0.1,0.95);

\coordinate (G) at (1.09,0.75);
\coordinate (H) at (1.09,0.85);
\coordinate (I) at (0.8,0.7);
\coordinate (J) at (0.8,0.9);

\draw [<-] (A) -- (C);
\draw [<-] (B) -- (D);
\draw [<-] (E) -- (F);
\draw [<-] (G) -- (I);
\draw [<-] (H) -- (J);
\end{tikzpicture}
\caption{\label{fig:grid} Illustration of the $2$D multiscale grid with a typical coarse element $\Ki$ and oversampling domain $K_{i,2}$, the fine grid element and neighborhood $\oi$ of the node $\x_{i}$.}
\end{figure}
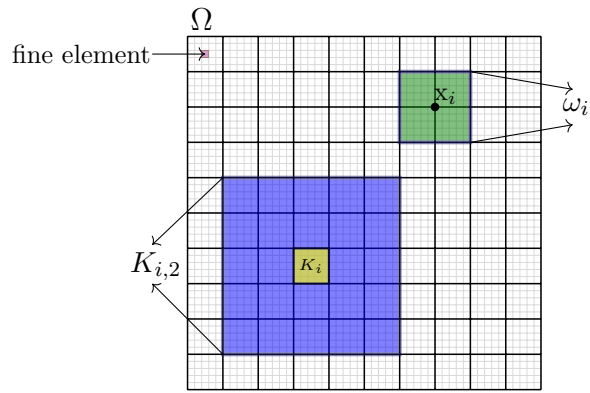

We consider the linear finite element space $\Vh$ associated with the grid $\Th$, where the basis functions in this space are the standard Lagrange basis functions defined as $\{\eta^{i}\}_{i=1}^{\Nf}$, where $\Nf$ denotes the number of interior nodes of $\Th$. Then, the semi-discrete finite element approximation to \eqref{eq:strong-prob} on the fine grid is to find $\uh\in\Vh$ such that
\begin{equation}
\label{eq:weak-prob}
\left\{\begin{split}
(\pt \uh,\vh) +a(\uh,\vh) &= (f(\uh),\vh),\quad\feac \vh\in\Vh,\fall t\in[0,T],\\
(\uh(0),\vh) &= (\hat{u},\vh),\quad\feac \vh\in\Vh.
\end{split}\right.
\end{equation}
Here we use the bilinear forms
\begin{align*}
(u,v)&=\int_{\Om}u(\x)v(\x)d\x,\quad\feac u,v\in\L^{2}(\Om),\\
a(u,v)&=\int_{\Om}\k(\x)\gd u(\x)\cdot\gd v(\x)d\x,\quad\feac u,v\in\H^{1}_{0}(\Om).
\end{align*}
We consider the following representation for the solution of \eqref{eq:weak-prob}:
\begin{equation}
\label{eq:fem-rep}
\uh(\x,t)=\sum_{i=1}^{\Nf}u_{i}(t)\eta_{i}(\x),\quad\fall t\in[0,T].
\end{equation}
We use the expression above into \eqref{eq:fweak-prob} and take $v=\eta_{j}$ for 
$j=1,\dots,\Nf$, then
\begin{equation}
\label{eq:xweak-prob}
\left\{\begin{split}
\sum_{i=1}^{\Nf}\frac{d}{dt}u_{i}(t)(\eta_{i},\eta_{j})+\sum_{i=1}^{\Nf}u_{i}(t)a(\eta_{i},\eta_{j})&=\left(f\left(\sum_{i=1}^{\Nf}u_{i}(t)\eta_{i}\right),\eta_{j}\right),\quad j=1,\dots,\Nf,\\
\sum_{i=1}^{\Nf}u_{i}(0)(\eta_{i},\eta_{j})&=(\hat{u},\eta_{j}),\quad j=1,\dots,\Nf.
\end{split}\right.
\end{equation}
We recast \eqref{eq:xweak-prob} in a continuous-time matrix formulation
\begin{equation}
\label{eq:mat-prob}
\left\{\begin{split}
\Mm\frac{d}{dt}\uh(t)+\Am\uh(t)&=\Fv(\uh(t)),\\
\Mm\uh(0) &= \hat{u},
\end{split}\right.
\end{equation}
where matrices $\Mm,\Am\in\R^{\Nf\times\Nf}$, and term $\Fv(\uh)\in\R^{\Nf}$, with
\begin{equation}
\begin{split}
\label{eq:def-mat}
[\Mm]_{ij} & = \int_{\Om}\eta_{i}(\x)\eta_{j}(\x)d\x,\quad [\Am]_{ij} = \int_{\Om}\k(\x)\gd\eta_{i}(\x)\cdot\gd\eta_{j}(\x)d\x,\\
\Fv_{i}(\uh) & =\int_{\Om}f(\uh)\eta_{i}(\x)d\x,\quad \uh = [u_{1}(t),\dots,u_{\Nf}(t)].
\end{split}
\end{equation}

\section{Construction of CEM-GMsFEM basis functions}
\label{sec:basis}
In this section, we shall describe the construction of CEM-GMsFEM basis functions using the framework from \cite{chung2018constraint}. This procedure can be divided into two stages. The first stage involves constructing the auxiliary spaces by solving a local spectral problem in each coarse element $K$. The second stage provides the multiscale basis functions by solving local constraint energy minimization problems in oversampling regions, see \cite{chung2018constraint}.

\subsection{Auxiliary basis function}

We present the construction of the auxiliary multiscale basis functions by solving the local eigenvalue problem for each coarse element $\Ki$. We consider $\HoKi:=\Ho\big|_{\Ki}$ the restriction of the space $\Ho$ to the coarse element $\Ki$. We solve the following local eigenvalue problem: find $\{\lij,\pij\}$ such that
\begin{equation}
\label{eq:eigen-prob}
a_{i}(\pij,w)=\lij s_{i}(\pij,w),\quad\feac w\in\HoKi,
\end{equation}
where
\[
a_{i}(v,w):=\int_{\Ki}\k\gd v(\x)\cdot\gd w(\x)d\x,\quad s_{i}(v,w):=\int_{\Ki}\wk v(\x)w(\x)d\x.
\]
Here, $\wk=\k\sum_{i=1}^{\Nc}|\gd\chi_{i}|^{2}$, where $\Nc$ is the total number of neighborhoods and $\{\chi_{i}\}$ is a set of partition of unity functions for the coarse grid $\TH$. The problem defined above is solved on the fine grid in the actual computation. We assume that the eigenfunctions satisfy the normalized condition $s_{i}(\pij,\pij)=1$. We shall use $L_{i}$ eigenvectors corresponding to the first $\lij$ eigenvalues that are arranged in increasing order and to construct the local auxiliary multiscale space $\Viax:=\{\pij:1\leq j\leq L_{i}\}$. We can define the global auxiliary multiscale space as $\Vax:=\bigoplus_{i=1}^{\Nc}\Viax$.

For the local auxiliary space $\Viax$, the bilinear form $s_{i}$ given above defines an inner product with norm $\|v\|_{s(\Ki)}=s_{i}(v,v)^{1/2}$. Then, we can define the inner product and norm for the global auxiliary multiscale space $\Vax$, which are defined by
\[
s(v,w)=\sum_{i=1}^{\Nc}s_{i}(v,w),\quad\|v\|_{s}:=s(v,v)^{1/2},\quad\feac v,w\in\Vax.
\]
To construct the CEM-GMsFEM basis functions, we use the following definition.

\begin{dfn}[\citealp{chung2018constraint}]
Given a function $\pij\in\Vax$, if a function $\psi\in\V$ satisfies 
\[
s(\psi,\pij):=1,\quad s(\psi,\varphi_{j'}^{(i')})=0,\quad\mbox{if }j'\neq j\mbox{ or }i'\neq i,
\]
then, we say that is $\pij$-orthogonal where $s(v,w)=\sum_{i=1}^{\Nc}s_{i}(v,w)$.
\end{dfn}
Now, we define $\pi:\V\to\Vax$ as the projection of the inner product $s(v,w)$. More precisely, $\pi$ is defined by
\[
\pi(v):=\sum_{i=1}^{\Nc}\pi_{i}(v)=\sum_{i=1}^{\Nc}\sum_{j=1}^{L_{i}}s_{i}(v,\pij)\pij,\quad\feac v\in\V,
\]
where $\pi_{i}:L^{2}(\Ki)\to\Viax$ denotes the projection with respect to inner product $s_{i}(\cdot,\cdot)$. The null space of the operator $\pi$ is defined by $\nV=\{v\in\V:\pi(v)=0\}$.
Now, we will construct the multiscale basis functions. Given a coarse block $\Ki$, we denote the oversampling region $\Kim\subset\Om$ obtained by enlarging $\Ki$ with an arbitrary number of coarse grid layers $m\geq1$, see Figure \ref{fig:grid}. Then, we define the multiscale basis functions by 
\begin{equation}
\label{eq:main-argmin}
\psijms=\argmin\{a(\psi,\psi):\psi\in\HozKim,\,\psi\mbox{ is $\pij$-orthogonal}\},
\end{equation}
where $\HoKim$ is the restriction of $\Ho$ in $\Kim$ and $\HozKim$ is the subspace of $\HoKim$ with zero trace on $\p\Kim$. The multiscale finite element space $\Vms$ is defined by
\[
\Vms=\spn\{\psijms:1\leq j\leq L_{i}, 1\leq i\leq \Nc\}.
\]
By introducing the Lagrange multiplier, the problem \eqref{eq:main-argmin} is equivalent to the explicit form: find $\psijms\in\HozKim$, $\xi\in\Viax(\Ki)$ such that
\[
\begin{cases}
a(\psijms,\mu)+s(\mu,\upsilon)&=0,\quad\fall \mu\in \HozKim,\\
s(\psijms-\pij,\nu)&=0,\quad\fall \nu\in\Viax(\Kim),
\end{cases}
\]
where $\Viax(\Kim)$ is the union of all local auxiliary spaces for $\Ki\subset\Kim$. Thus, the semi-discrete multiscale approximation reads as follows: find $\ums\in\Vms$ such that
\begin{equation}
\label{eq:sms-prob}
(\pt \ums,v) +a(\ums,v) = (f(\ums),v),\quad\feac v\in\Vms.
\end{equation}
Each multiscale basis function $\psijms$ is eventually represented on the fine grid. Therefore, each $\psijms$ can be represented by a vector. Using \eqref{eq:main-argmin}, we shall construct the coarse-scale matrix of local basis functions as:
\[
\Rmz = [\psi_{1,\ms}^{(i)},\dots,\psi_{L_{i},\ms}^{(i)}],
\]
which maps quantities from the multiscale space $\Vms$ to the fine-scale space $\Vh$. Similarly to \eqref{eq:mat-prob}, we can define the matrix coarse-scale nonlinear system as
\begin{equation}
\label{eq:mat-cprob}
\Mmz\frac{d}{dt}\uhms^{H}+\Amz\uhms^{H}=\Fvz(\uhms^{H}),
\end{equation}
where $\uhms^{H}$ is the coarse-scale approximation, and the coarse-scale stiffness and mass matrices are given by
\begin{equation}
\label{eq:coarse-mat}
\Amz=\Rmz^{T}\Am\Rmz,\quad\Mmz=\Rmz^{T}\Mm\Rmz,
\end{equation}
respectively, and $T$ represents the transpose operator. We also have the coarse-scale vector as $\Fvz=\Rmz^{T}\Fv$.

\section{Temporal discretization}
\label{sec:expo-inte}
This section considers the fully-discrete scheme for the discrete formulation \eqref{eq:weak-prob}. Let $0=t_{0}<t_{1}<\cdots<t_{\Nt-1}<t_{\Nt}=T$ be a partition of the interval $[0,T]$, with time-step size given by $\dt=t_{n}-t_{n-1}>0$,  for $n=1,\dots,\Nt$, where $\Nt$ is an integer. 


\subsection{Classical implicit time integration schemes}
\label{ssec:fd-scheme}
To solve the ODE matrix systems \eqref{eq:mat-prob} and \eqref{eq:mat-cprob} in an interval $[0, T]$ and comparing the proposed method, we introduce the  implicit time integration scheme, which leads to finding $\uhn\in\Vh$ such that
\begin{equation}
\label{eq:fweak-prob}
(\uhn-\uhnm,v)+\dt a(\uhn,v)=\dt(f(\uhn),v),\quad\feac v\in\Vh,
\end{equation}
with a small enough time step size $\dt$. By using \eqref{eq:fem-rep} into \eqref{eq:fweak-prob} and take $v=\eta_{j}$ for $j=1,\dots,\Nf$, we then have the matrix problem
\begin{equation}
\label{eq:full-mat-prob}
\left\{\begin{split}
\Mm(\uhn-\uhnm)&=\dt\theta\left(\Fv(\uhn)-\Am\uhn\right)+\dt(1-\theta)\left(\Fv(\uhnm)-\Am\uhnm\right),\\
\Mm\uhz &= \tu,
\end{split}\right.
\end{equation}
where $\tu=\int_{\Om}\hat{u}\eta_{j}dx$ and matrices $\Mm,\Am$ and vector $\Fv(\uhn)$ are defined as in \eqref{eq:def-mat}. If $\theta=1$, we obtain the backward Euler scheme. On the other hand, by taking $\theta=\tfrac{1}{2}$, we obtain the Crank-Nicolson scheme.

 Analogously, we can define a full-discrete multiscale formulation: find $\umsn\in\Vms$ such that
\begin{equation}
\label{eq:fms-prob}
(\umsn-\umsnm,v) +\dt a(\umsn,v) = \dt(f(\umsn),v),\quad\feac v\in\Vms.
\end{equation}
Therefore, denoting the fine-scale residue by $\Rv=\Mm\uhnm+\dt\Fv(\uhn)$, we obtain that
\[
\uhnms = \Rmz(\Mmz + \dt\theta\Amz)^{-1}\Rmz^{T}\left((\Mm-\dt(1-\theta)\Am)\uhnmms+\dt((1-\theta)\Fv(\uhnmms)-\Fv(\uhnms))\right). 
\]

\subsection{Explicit exponential time integration}

In this subsection, we recall exponential integration techniques relevant to the proposed constraint energy minimizing generalized multiscale finite element approach. Firstly, We define the discrete solution $\uh(t)$ evolving in time. Since that $a(\cdot,\cdot)$ is a bounded bilinear functional over $\Vh\times\Vh$, by invoking Riesz's representation Theorem, we have that there exists a bounded linear operator $\Lh:\Vh\to\Vh$ such that
\[
a(\uh,\vh)=\langle\Lh\uh,\vh\rangle,\quad\feac\vh\in\Vh.
\]
Then, we can be rewritten \eqref{eq:weak-prob} as
\begin{equation}
\label{eq:Lh-fem}
\begin{cases}
\pt\uh+\Lh\uh &=\Phf(\uh),\quad \feac\x\in\Om,\quad 0\leq t\leq T,\\
\uh(0)& = \Ph\hat{u},\quad \x\in\Om,
\end{cases}
\end{equation}
where $\Ph:\L^{2}\to\Vh$ is the $\L^{2}$-orthogonal projection operator.  Analogous to the matrix problem \eqref{eq:mat-prob},  we assume that $\uh(t_{n-1})$ is given for a current time $t_{n-1}$, and we pretend to compute $\uh(t_{n})$, with $t_{n}=t_{n-1}+\dt$. Thus, we introduce the integrating factor problem given by
\begin{equation}
\label{eq:homo-ode}
\frac{d}{dt}\Bv(t)=\Bv(t)\Lh,\quad \Bv(t_{n-1})=\Im,\quad n = 1,\dots,\Nt,
\end{equation}
where $\Im$ is the identity matrix. The problem \eqref{eq:homo-ode} has a unique solution given by	$\Bv(t)=e^{(t-t_{n-1})\Lh}$. Then, using the matrix $\Lh$ and the integration factor above, one can rewrite the problem \eqref{eq:mat-prob} as
\[
\frac{d}{dt}(\Bv(t)\Lh\uh(t))=\Bv(t)\Phf(\uh),
\]
This problem has an exact solution implicitly represented as
\begin{equation}
\label{eq:ex-expint}
\uh(t_{n}) = e^{-\dt\Lh}\uh(t_{n-1})+\int_{0}^{\dt}e^{(s-\dt)\Lh}\Phf(\uh(s+t_{n-1}))ds.
\end{equation}
Notice that $\Bv^{-1}(t):=e^{-(t-t_{n-1})\Nm}$ denotes the inverse of $\Bv(t)$. This last equation is well-known as the variation-of-constants formula \citep{hochbruck2010exponential,hochbruck1998exponential}.

\subsubsection{Numerical time integration}


We now pretend an approximation to the nonlinear term within the integral of \eqref{eq:ex-expint} via an interpolating polynomial on certain quadrature nodes. We denote $\uhn\approx\uh(t_{n})$ the fully discrete numerical solution at the time step $t_{n}$. Then, we shall apply the classic explicit exponential Runge-Kutta scheme \citep{hochbruck2010exponential} and obtain a fully-discrete numerical method for solving the problem \eqref{eq:strong-prob} as follows: for $n=1,\dots,\Nt$,
\begin{equation}
\label{eq:RK-s}
\left\{\begin{split}
U^{n,i} & = e^{-c_{i}\dt\Lh}\uhnm+\dt\sum_{j=1}^{i-1}\alpha_{ij}(-\dt\Lh)\Phf(U^{n,j}),\quad i = 1,\dots, m,\\
\uhn& = e^{-\dt\Lh}\uhnm+\dt\sum_{i=1}^{m}\beta_{i}(-\dt\Lh)\Phf(U^{n,i}),\\
\end{split}\right.
\end{equation}
where $m$ denotes the number of stages for the exponential Runge-Kutta method and $U^{n,i}\approx\uh(t_{n-1}+c_{i}\dt)$.  Here the interpolation nodes $c_{1},\dots,c_{m}$ are $m$ distinct nodes selected in $[0,1]$. The coefficients $\alpha_{ij}(-\dt\Lh)$ and $\beta_{j}(-\dt\Lh)$ are  chosen as linear combinations of the $\phi$-functions $\phi_{k}(-c_{i}\dt\Lh)$ and $\phi_{k}(-\dt\Lh)$, respectively. These functions are given by
\begin{equation}
\label{eq:phi-function}
\phi_{0}(\Nm) = e^{\Nm},\quad\phi_{k}(\Nm) = \int_{0}^{1}e^{(1-\theta)\Nm}\frac{\theta^{k-1}}{(k-1)!}d\theta,\quad k\geq 1.
\end{equation}
These $\phi$-functions satisfy the following recurrence relation (see for instance \citealp{cox2002exponential}),
\[
\phi_{k+1}(\Nm)=\Nm^{-1}\left(\phi_{k}(\Nm)-\frac{1}{k!}\Im\right),\quad\mbox{for }k\leq0,
\]
and we are assuming $\Nm^{-1}$ exists. Thus,
\[
\phi_{k}(-\dt\Lh)=\frac{1}{(\dt)^{k}}\int_{0}^{\dt}e^{-(\dt-\tau)\Lh}\frac{\tau^{k-1}}{(k-1)!}d\tau,\quad \feac k\geq 1.
\]
For reasons of consistency, we will assume throughout the paper that the following assumptions hold \citep{hochbruck2010exponential}, 
\begin{equation}
\label{eq:cons-cond}
\sum_{j=1}^{m}\beta_{j}(-\dt\Lh)=\phi_{1}(-\dt\Lh),\quad\sum_{j=1}^{i-1}\alpha_{ij}(-\dt\Lh)=c_{i}\phi_{1}(-\dt\Lh),\quad 1\leq i\leq m.
\end{equation}
From the latter, we can infer that $c_{1}=0$. Taking $m=1$ in \eqref{eq:RK-s}, we have a first-order scheme given as
\begin{equation}
\label{eq:EIRK1}
\uhnRKo = \uhnm+\dt\phi_{1}(-\dt\Lh)\left[\Phf(\uhnm)-\Lh\uhnm\right],
\end{equation}
which is well-known as the first-order Exponential Euler method. Analogously,
for $m=2$, we have two interpolation nodes $c_{1}=0$ and $c_{2}\in(0,1]$. In particular, we shall use $c_{2}=1$. Thus, we obtain the two-stage second-order exponential Runge-Kutta scheme for  \eqref{eq:RK-s} correspondingly reads
\begin{equation}
\label{eq:EIRK22}
\uhnRKt = \uhnRKo+\dt\phi_{2}(-\dt\Lh)\left[\Phf(\uhnRKo)-\Phf(\uhnm)\right].
\end{equation}
More general (higher-order) exponential time stepping schemes need more complicated order conditions and may be derived using higher-order $\phi$-functions defined above (see \cite{hochbruck2005explicit,hochbruck2005exponential} for more details). Analogously, we propose the approximation
\[
\phi_{k}(-\dt\Lh)\approx \Rmz\phi_{k}(-\dt\Lms)\Rmz^{T},\quad k\geq 1,
\]
where $\Lms$ is the linear operator associated with the bilinear functional $a(\cdot,\cdot)$ in \eqref{eq:sms-prob}. Then, we can define a first-order, fully discrete multiscale exponential integrator formulation
\begin{equation}
\label{eq:ms-RK1}
\uhnmsro = \uhnmms + \dt\Rmz\phi_{1}(-\dt\Lms)\Rmz^{T}\left(\Phf(\uhnmms)-\Lh\uhnmms\right),
\end{equation}
and a second-order formulation
\begin{equation}
\label{eq:ms-RK22}
\uhnmsrt = \uhnmsro + \dt\Rmz\phi_{2}(-\dt\Lms)\Rmz^{T}\left(\Phf(\uhnmsro)-\Phf(\uhnmms)\right),
\end{equation}
for $n=1,\dots,\Nt$.

\section{Numerical experiments}
\label{sec:numerical}
In this section, we present some numerical results by using the exponential integration CEM-GMsFEM to solve the parabolic equation with the multiscale permeability field $\k(\x)$ up to $t=T$ using various time-step sizes $\dt=T/\Nt$ and a fixed number of fine grid nodes. Our numerical experiments evaluate the $\phi$-functions using the Pad\'e approximation implemented EXPINT package in \cite{berland2007expint}. We consider spatial variable $\x=(x_{1},x_{2})$, in $\Om=[0,1]\times[0,1]$ and a $128\times128$ fine grid to compute a reference solution. To show spatial accuracy, we will choose a different coarse grid size to compute the relative error between the fine-scale and first-order CEM-GMsFEM-EIRK1 (Exponential Runge-Kutta) and CEM-GMsFEM-FDBE (Finite Difference Backward Euler) solutions. For temporal accuracy, we consider different time steps in all experiments. We also consider different high-contrast permeability fields $\k_{i}(\x)$, with $i\in\{1,2,3,4\}$ used in the numerical examples shown in Figure~\ref{fig:perm-fields}.
\begin{figure}[t]
\centering
\subfloat[perm1][$\k_{1}$.]{\includegraphics[width=0.25\textwidth]{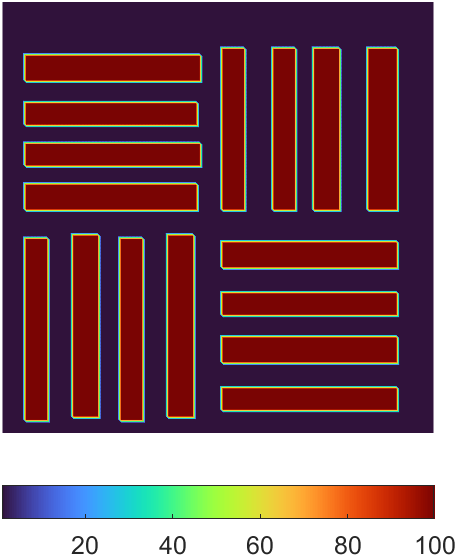}
\label{fig:perm1}}
\subfloat[perm2][$\k_{2}$.]{\includegraphics[width=0.25\textwidth]{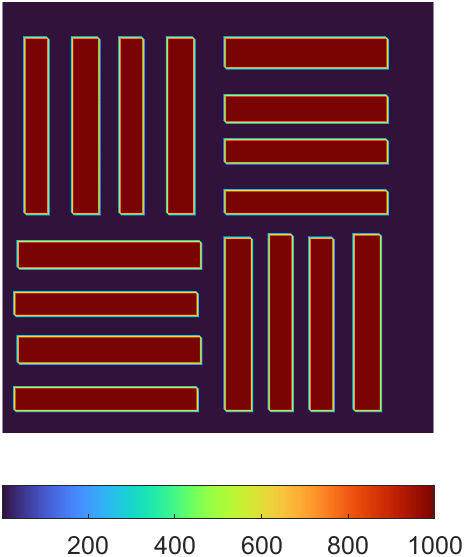}
\label{fig:perm2}}
\subfloat[perm3][$\k_{3}$.]{\includegraphics[width=0.25\textwidth]{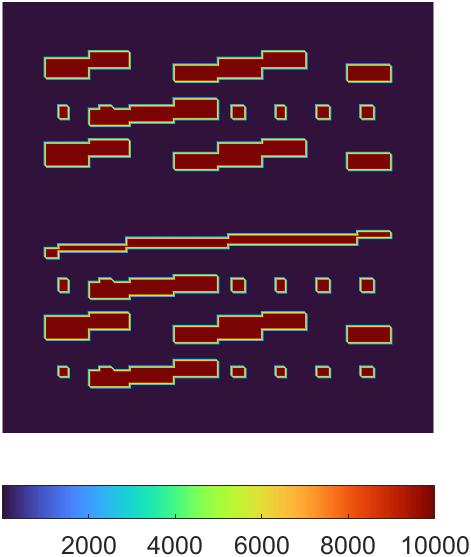}\label{fig:perm3}}
\subfloat[perm4][$\k_{4}$.]{\includegraphics[width=0.25\textwidth]{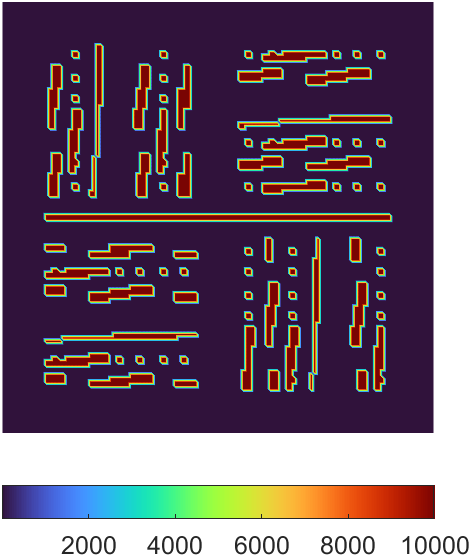}\label{fig:perm4}}
\caption{\label{fig:perm-fields} Permeability fields.}
\end{figure}

In addition, to quantify the accuracy of the multiscale solutions obtained from the proposed method, we define the relative $\L^{2}$, $\H^{1}$ and $\max$ error as follows:
\[
\varepsilon_{0}=\frac{\|\uh^{\Nt}-\ums^{\Nt}\|_{0}}{\|\uh^{\Nt}\|_{0}},\quad \varepsilon_{1}=\frac{\|\uh^{\Nt}-\ums^{\Nt}\|_{a}}{\|\uh^{\Nt}\|_{a}},\quad \varepsilon_{\infty}=\frac{\|\uh^{\Nt}-\ums^{\Nt}\|_{\infty}}{\|\uh^{\Nt}\|_{\infty}},
\]
where $\uh^{\Nt}$ denotes the reference solution at instant $t=T$.

\begin{example}
\label{example1}
In this experiment, we consider the problem:
\begin{equation}
\label{eq:example1}
\left\{\begin{split}
\pt u -\dive(\k_{1}(\x)\gd u)& = 0,\quad\mbox{in }\Om\times[0,T],\\
u(x_{1},x_{2},t)& = 0,\quad\mbox{on }\pOm\times[0,T],\\
u(x_{1},x_{2},t=0)& = x_{1}(1-x_{1})x_{2}(1-x_{2}),\quad\mbox{on }\Om.
\end{split}\right.
\end{equation}

\begin{figure}
\centering
\begin{tikzpicture}
\pgfplotsset{samples=10}
\centering
\begin{groupplot}[
group style = {group size = 2 by 1, 
horizontal sep = 2cm,
vertical sep = 20pt}, 
width  = 6.6cm, 
height = 6cm
]
\nextgroupplot[
xlabel={\small{Number of local basis functions}}, 
xtick={1,...,8},
ylabel={\small{Relative error}},
ymode=log,
legend style = { column sep = 3pt, legend columns = -1, legend to name = grouplegend,}
]
\addplot[Blue,mark=triangle*,mark size=1.2pt] table [x=NB,y=H1] {error/ex1_svd0_nb_200.dat};\addlegendentry{$\varepsilon_{a}$(EIRK1)}
\addplot[BrickRed,mark=diamond*,mark size=1.2pt] table [x=NB,y=L2] {error/ex1_svd0_nb_200.dat};\addlegendentry{$\varepsilon_{0}$(EIRK1)}
\addplot[ForestGreen,mark=square*,mark size=1.2pt] table [x=NB,y=H1] {error/ex1_fd_nb_200.dat};\addlegendentry{$\varepsilon_{a}$(FDBE)}
\addplot[JungleGreen,mark=*,mark size=1.2pt] table [x=NB,y=L2] {error/ex1_fd_nb_200.dat};;\addlegendentry{$\varepsilon_{0}$(FDBE)}
\nextgroupplot[
xlabel={\small{Number of oversampling layers}}, 
xtick={1,...,8},
ylabel={\small{Relative error}},
ymode=log,
]
\addplot[Blue,mark=triangle*,mark size=1.2pt] table [x=OS,y=H1] {error/ex1_svd0_ov_200.dat};
\addplot[BrickRed,mark=diamond*,mark size=1.2pt] table [x=OS,y=L2] {error/ex1_svd0_ov_200.dat};
\addplot[ForestGreen,mark=square*,mark size=1.2pt] table [x=OS,y=H1] {error/ex1_fd_ov_200.dat};
\addplot[JungleGreen,mark=*,mark size=1.2pt] table [x=OS,y=L2] {error/ex1_fd_ov_200.dat};
\end{groupplot}
\node at ($(group c2r1) + (-3.5cm,-4.0cm)$) {\ref{grouplegend}}; 
\end{tikzpicture}
\caption{\label{fig:example1nb} Relative error for the CEM-GMsFEM-EIRK1 and CEM-GMsFEM-FDBE solution with increasing the number of local multiscale basis functions (left) and the number of oversampling layers (right) for problem \eqref{eq:example1} at final time $T=0.2$.}
\end{figure}
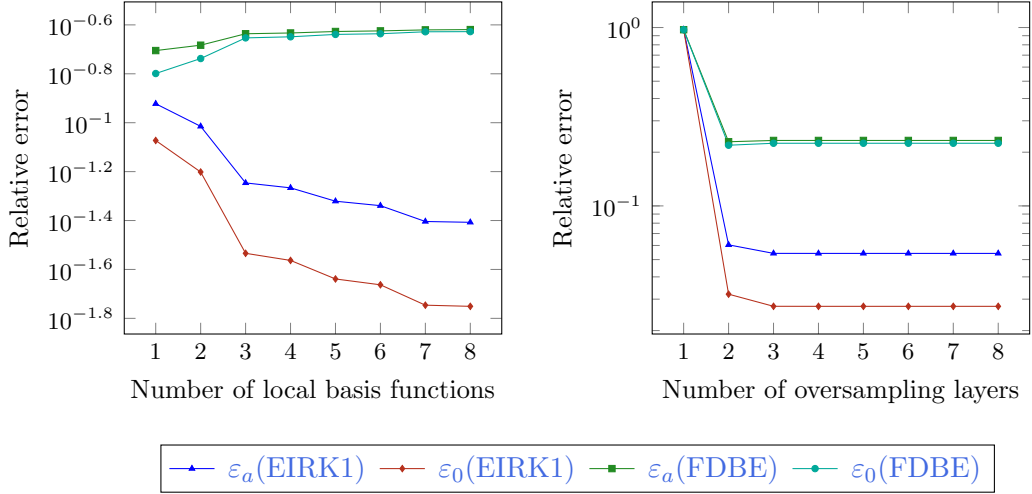

We test with a vanishing reaction term $f:=0$ and use a high-contrast media $\k_{1}$ with the value of $10^{2}$ in the high-contrast channels (see Figure~\ref{fig:perm1}). The final time of this simulation is $T=0.2$, and we consider $200$ time steps for CEM-GMsFEM-EIRK1 and CEM-GMsFEM-FDBE. The reference solution is approximated in the fine-scale grid using the backward Euler scheme with $1000$ time steps. In Figure \ref{fig:example1nb}, we present the relative error estimates between the reference solution and CEM-GMsFEM-EIRK1 and CEM-GMsFEM-FDBE schemes for problem \eqref{eq:example1} with a coarse grid size of $H=\tfrac{1}{8}$. From Figure (left), we observe that the errors decrease as the number of local basis functions increases using CEM-GMsFEM-EIRK1 in contrast to the CEM-GMsFEM-FDBE scheme. Therefore, we obtain a good agreement using only a few local basis functions on each coarse block. From Figure \ref{fig:example1nb} (right), we can observe that the accuracy will improve as the number of oversampling layers increases. Further, when enough oversampling layers are given, the error tends to be constant. 

\begin{table}
\centering
\begin{tabular}{|c|c|c|c|c|c|}
\hline 
Scheme & $H$ & $m$ & $\varepsilon_{a}$ & $\varepsilon_{0}$ & $\varepsilon_{\infty}$\tabularnewline
\hline 
\hline 
\multirow{4}{*}{EIRK1} & $\tfrac{1}{2}$ & 1 & 1.3299E--01 & 8.6967E--02 & 1.2217E--01\tabularnewline
\cline{2-6} \cline{3-6} \cline{4-6} \cline{5-6} \cline{6-6} 
 & $\frac{1}{4}$ & 2 & 1.1094E--01 & 6.9012E--02 & 1.0449E--01 \tabularnewline
\cline{2-6} \cline{3-6} \cline{4-6} \cline{5-6} \cline{6-6} 
 & $\frac{1}{8}$ & 2 & 5.4157E--02 & 2.9707E--02 & 3.9885E--02
\tabularnewline
\cline{2-6} \cline{3-6} \cline{4-6} \cline{5-6} \cline{6-6} 
 & $\frac{1}{16}$ & 3 & 2.5312E--02 & 1.0422E--02 & 1.6499E--02
\tabularnewline
\hline 
\multirow{4}{*}{FDBE} & $\frac{1}{2}$ & 1 & 1.8257E--01 & 1.5295E--01 & 1.5246E--01\tabularnewline
\cline{2-6} \cline{3-6} \cline{4-6} \cline{5-6} \cline{6-6} 
 & $\frac{1}{4}$ & 2 & 1.9529E--01 & 1.7438E--01 & 1.7775E--01\tabularnewline
\cline{2-6} \cline{3-6} \cline{4-6} \cline{5-6} \cline{6-6} 
 & $\frac{1}{8}$ & 2 & 2.2888E--01 & 2.2171E--01 & 2.1371E--01 \tabularnewline
 \cline{2-6} \cline{3-6} \cline{4-6} \cline{5-6} \cline{6-6} 
 & $\frac{1}{16}$ & 3 & 2.5040E--01 & 2.4381E--01 & 2.5981E--01\tabularnewline
\hline 
\end{tabular}
\caption{\label{tab:example1} Spatial convergence rate for problem~\eqref{eq:example1} at the final time $T=0.2$ with varying coarse grid size $H$ and oversampling coarse layers $m$ using a contrast of $10^{2}$.}
\end{table}
Table \ref{tab:example1} shows the convergence behavior concerning the coarse mesh size in $\H^{1}$, $\L^{2}$ and $\max$-norm for the problem~\eqref{eq:example1} at the final time. We only use $4$ basis functions on each coarse block with varying coarse grid sizes $H=\tfrac{1}{2},\tfrac{1}{4},\tfrac{1}{8}$,  and $\tfrac{1}{16}$, associated with an appropriate number of oversampling layers $m$. Observe that for CEM-GMsFEM-EIRK1 with fixed $H=\tfrac{1}{8}$, the relative errors are just about $6.05\%,2.97\%$ and $3.98\%$ in $\H^{1}$, $\L^{2}$ and $\max$-norm respectively. Furthermore, for CEM-GMsFEM-EIRK1 schemes, the errors decay as coarse mesh size. On the other hand, the relative errors for the CEM-GMsFEM-FDBE scheme are just about $22.89\%,22.17\%$, and $21.37\%$ in the respective norms. We also observe that errors are increased even with increasing time steps as we increment coarse mesh size.  Table~\ref{tab:example12} shows the order of temporal accuracy for CEM-GMsFEM-EIRK1 scheme in $\L^{2}$ and $\H^{1}$-norms, where $CR$ is defined as 
\[
CR =\frac{|\ln\varepsilon^{\Nt(i)}_{\star}-\ln\varepsilon^{\Nt(i-1)}_{\star}|}{\ln 2},\quad i \in\{2,...,5\},
\]
where $\star\in\{0,a\}$. We notice that the order of the temporal accuracy in the $\varepsilon_{a}$ is just about $1$, which coincides with the result given in Theorem~\ref{thm:RK1dtH}. In addition, the order of the temporal accuracy in the $\varepsilon_{0}$ is much better than expected from theoretical results given in Theorem~\ref{thm:RK22dt2H2}. Therefore, we have a good performance of the CEM-GMsFEM-EIRK1 scheme. Figure~\ref{fig:example1} depicts the solution profiles at the final instant $T=0.2$ using the two schemes.
\begin{table}
\centering
\begin{tabular}{|c|c|c|c|c|c|}
\hline 
Scheme & $\Nt$ & $\varepsilon_{a}$ & $CR$ & $\varepsilon_{0}$ & $CR$ \tabularnewline
\hline 
\hline 
\multirow{5}{*}{EIRK1} & 8 & 2.1449E+00 & -- & 2.1405E+00 & -- \tabularnewline
\cline{2-6} \cline{3-6} \cline{4-6} \cline{5-6} \cline{6-6}
 & 16 & 7.1026E--01 & 1.5945E+00 & 7.0602E--01 & 1.6001E+00\tabularnewline
\cline{2-6} \cline{3-6} \cline{4-6} \cline{5-6} \cline{6-6} 
 & 32 & 2.6403E--01 & 1.4276E+00 & 2.5743E--01 & 1.4555E+00\tabularnewline
\cline{2-6} \cline{3-6} \cline{4-6} \cline{5-6} \cline{6-6} 
 & 64 & 9.4676E--02 & 1.4796E+00 & 7.9631E--02 & 1.6928E+00\tabularnewline
 \cline{2-6} \cline{3-6} \cline{4-6} \cline{5-6} \cline{6-6} 
 & 128 & 4.8138E--02 & 9.7581E--01 & 4.9546E--03 & 4.0064E+00\tabularnewline
 \hline
\end{tabular}
\caption{\label{tab:example12} Temporal convergence rate for problem~\eqref{eq:example1} with  contrast of $10^{2}$ and final time of $T=0.2$ with varying time steps $\Nt$ and fixed coarse size of $H=\tfrac{1}{8}$ and $4$ basis functions.}
\end{table}
\begin{figure}
\centering
\subfloat[EIexa1][EIRK1.]{\includegraphics[width=0.30\textwidth]{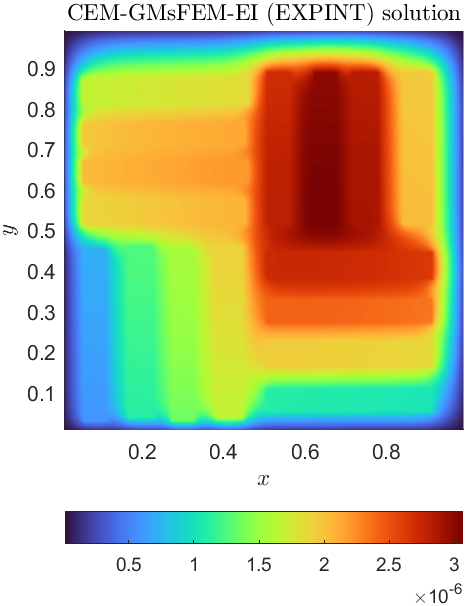}\label{fig:EI1exa1}}~
\subfloat[FDexa1][FDBE.]{\includegraphics[width=0.30\textwidth]{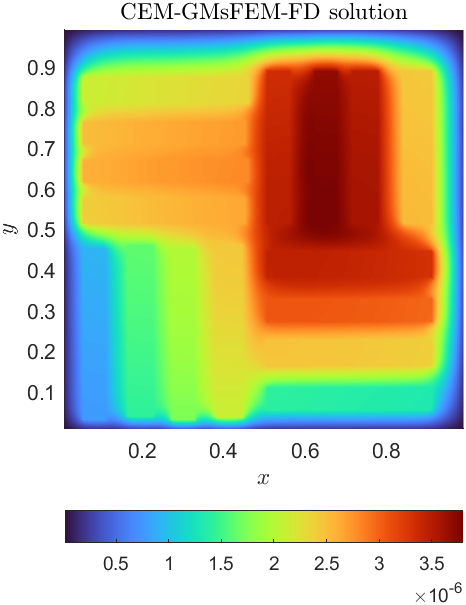}\label{fig:FDexa1}}~
\subfloat[RFexa1][Reference solution.]{\includegraphics[width=0.30\textwidth]{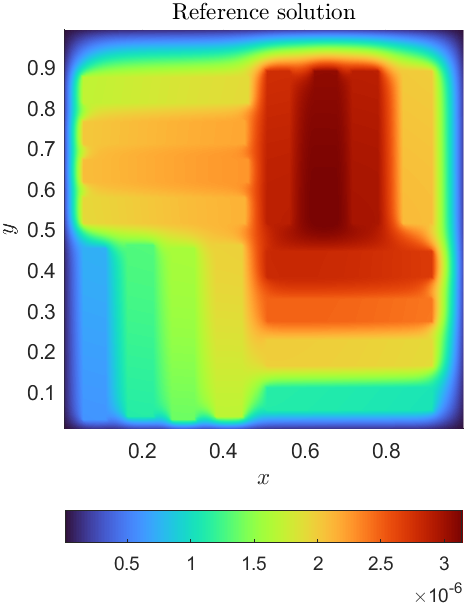}\label{fig:RFexa1}}
\caption{\label{fig:example1} (a) CEM-GMsFEM-EIRK1, (b) CEM-GMsFEM-FDBE, and (c) The reference solution of problem~\eqref{eq:example1} at final time $T=0.2$, using coarse grid size $H=\tfrac{1}{8}$, $4$ local multiscale basis functions and $m=4$ oversampling layers.}
\end{figure}
\end{example}

\begin{example}
In this experiment, we consider the problem:
\begin{equation}
\label{eq:example2}
\left\{\begin{split}
\pt u -\dive(\k_{2}(\x)\gd u)& = u-u^{3},\quad\mbox{in }\Om\times[0,T],\\
u(x_{1},x_{2},t)& = 0,\quad\mbox{on }\pOm\times[0,T],\\
u(x_{1},x_{2},t=0)& = x_{1}(1-x_{1})x_{2}(1-x_{2}),\quad\mbox{on }\Om.
\end{split}\right.
\end{equation}
We use the high-contrast media $\k_{2}$ with the value of $10^{3}$ in the high-contrast channels (see Figure~\ref{fig:perm2}). The final time is $T=0.2$, and we consider $200$ time steps for CEM-GMsFEM-EIRK1 and $300$ time steps for CEM-GMsFEM-FDBE. The reference solution is again approximated in the fine-scale grid using the Backward Euler scheme with $1000$ time steps. 
\begin{figure}
\centering
\begin{tikzpicture}
\pgfplotsset{samples=10}
\centering
\begin{groupplot}[
group style = {group size = 2 by 1, 
horizontal sep = 2cm,
vertical sep = 20pt}, 
width = 6.6cm, 
height = 6cm
]
\nextgroupplot[
xlabel={\small{Number of local basis functions}}, 
xtick={1,...,8},
ylabel={\small{Relative error}},
ymode=log,
legend style = { column sep = 3pt, legend columns = -1, legend to name = grouplegend,}
]
[Blue,mark=triangle*]
\addplot[Blue,mark=triangle*,mark size=1.2pt] table [x=NB,y=H1] {error/ex2_svd0_nb_200.dat};\addlegendentry{$\varepsilon_{a}$(ERK1)}
\addplot[BrickRed,mark=diamond*,mark size=1.2pt] table [x=NB,y=L2] {error/ex2_svd0_nb_200.dat};\addlegendentry{$\varepsilon_{0}$(ERK1)}
\addplot[ForestGreen,mark=square*,mark size=1.2pt] table [x=NB,y=H1] {error/ex2_fd_nb_300.dat};\addlegendentry{$\varepsilon_{a}$(FDBE)}
\addplot[JungleGreen,mark=*,mark size=1.2pt] table [x=NB,y=L2] {error/ex2_fd_nb_300.dat};;\addlegendentry{$\varepsilon_{0}$(FDBE)}
\nextgroupplot[
xlabel={\small{Number of oversampling layers}}, 
xtick={1,...,8},
ylabel={\small{Relative error}},
ymode=log,
]
\addplot[Blue,mark=triangle*,mark size=1.2pt] table [x=OS,y=H1] {error/ex2_svd0_ov_200.dat};
\addplot[BrickRed,mark=diamond*,mark size=1.2pt] table [x=OS,y=L2] {error/ex2_svd0_ov_200.dat};
\addplot[ForestGreen,mark=square*,mark size=1.2pt] table [x=OS,y=H1] {error/ex2_fd_ov_300.dat};
\addplot[JungleGreen,mark=*,mark size=1.2pt] table [x=OS,y=L2] {error/ex2_fd_ov_300.dat};
\end{groupplot}
\node at ($(group c2r1) + (-3.5cm,-4.0cm)$) {\ref{grouplegend}}; 
\end{tikzpicture}
\caption{\label{fig:example2nb} Relative error between the CEM-GMsFEM-EIRK1 solution and the reference solution with increasing the number of local basis functions (left) and the number of oversampling layers (right) for problem \eqref{eq:example2} at the final time $T=0.2$.}
\end{figure}
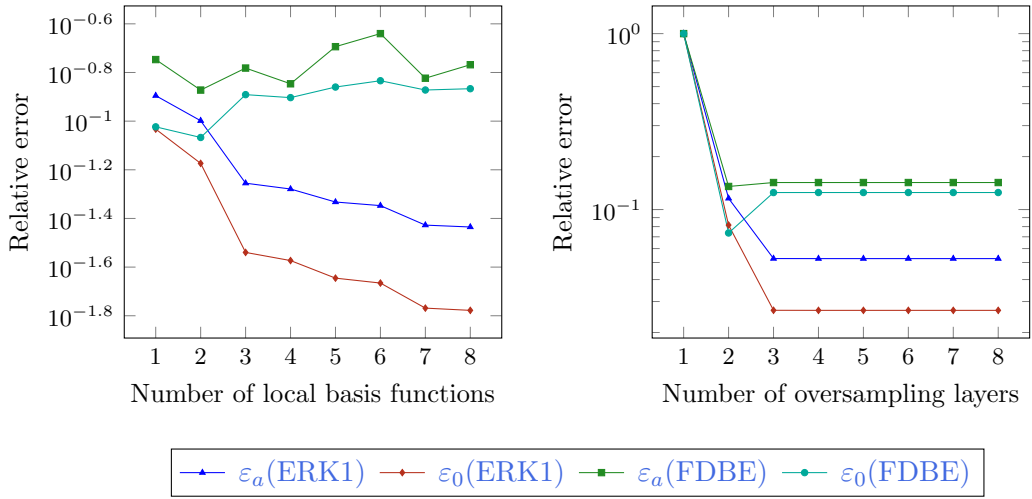
As in Example~\ref{example1}, Figure~\ref{fig:example2nb} (left) shows the relative errors with different numbers of local basis functions. Observe that errors decrease as the number of local basis functions increases using the CEM-GMsFEM-EIRK1 scheme in contrast to the CEM-GMsFEM-FDBE scheme. Therefore, we obtain a good agreement using only a few local basis functions on each coarse block. Furthermore, in Figure~\ref{fig:example2nb} (right), we show the error when we increase the numbers of oversampling layers using a fixed coarse grid size $H=\tfrac{1}{8}$ and $4$ basis functions on each coarse block. We observe that when the number of oversampling layers increases, the approximation becomes more accurate, and the error decreases very slowly once the number of oversampling layers attains a certain number. 

Table \ref{tab:example2} shows the $\H^{1}$, $\L^{2}$ and $\max$-error obtained at the final time simulation for the problem~\eqref{eq:example2}. We use $4$ basis functions on each coarse block with increasing the coarse grid size $H=\tfrac{1}{2},\tfrac{1}{4},\tfrac{1}{8}$,  and $\tfrac{1}{16}$, associated with some appropriate oversampling layers $m$. We obtain a good approximation for all coarse scale solutions with big-time steps by CEM-GMsFEM-EIRK1 compared with the CEM-GMsFEM-FDBE. In the latter, even when using a sufficiently small time step, the scheme does not converge and is more drastic than in example~\ref{example1}. In Table~\ref{tab:example22}, we show the temporal convergence order in both $\L^{2}$ and $\H^{1}$-norms. Similar to Example~\ref{example1}, we have that the order of $\H^{1}$-norm is about $1$ and  $\L^{2}$-norm is much better that theoretical result. Figure~\ref{fig:example2} depicts the solution profiles at the final instant $T=0.2$ using the two schemes and reference solution for the problem \eqref{eq:example2}. 
\begin{table}[!h]
\centering
\begin{tabular}{|c|c|c|c|c|c|}
\hline 
Scheme & $H$ & $m$ & $\varepsilon_{a}$ & $\varepsilon_{0}$ & $\varepsilon_{\infty}$\tabularnewline
\hline 
\hline 
\multirow{4}{*}{EIRK1} & $\frac{1}{2}$ & 1 & 1.3711E--01 & 9.1747E--02 & 1.2567E--01\tabularnewline
\cline{2-6} \cline{3-6} \cline{4-6} \cline{5-6} \cline{6-6} 
 & $\frac{1}{4}$ & 2 & 1.1267E--01 & 7.1108E--02 & 1.0623E--01 \tabularnewline
\cline{2-6} \cline{3-6} \cline{4-6} \cline{5-6} \cline{6-6} 
 & $\frac{1}{8}$ & 3 & 5.2644E--02 & 2.4200E--02 & 4.0774E--02
\tabularnewline
\cline{2-6} \cline{3-6} \cline{4-6} \cline{5-6} \cline{6-6} 
 & $\frac{1}{16}$ & 4 & 1.9081E--02 & 8.1719E--03 & 1.2419E--02
\tabularnewline
\hline 
\multirow{4}{*}{FDBE} & $\frac{1}{2}$ & 1 & 1.4664E--01 & 7.8413E--02 & 8.7485E--02\tabularnewline
\cline{2-6} \cline{3-6} \cline{4-6} \cline{5-6} \cline{6-6} 
 & $\frac{1}{4}$ & 2 & 1.3394E--01 & 8.3843E--02 & 9.9419E--02 \tabularnewline
\cline{2-6} \cline{3-6} \cline{4-6} \cline{5-6} \cline{6-6} 
 & $\frac{1}{8}$ & 3 & 1.4239E--01 & 1.2660E--01& 1.6325E--01\tabularnewline
 \cline{2-6} \cline{3-6} \cline{4-6} \cline{5-6} \cline{6-6} 
 & $\frac{1}{16}$ & 4 & 3.8840E--01 & 1.7666E--01 & 3.6423E--01\tabularnewline
\hline 
\end{tabular}
\caption{\label{tab:example2} Errors between the coarse-scale and the reference solution at the final time $T=0.2$ with different coarse grid sizes and different numbers of associated oversampling layers $m$ for problem~\eqref{eq:example2} using a contrast of $10^{3}$.}
\end{table}
\begin{table}
\centering
\begin{tabular}{|c|c|c|c|c|c|}
\hline 
Scheme & $\Nt$ & $\varepsilon_{a}$ & $CR$ & $\varepsilon_{0}$ & $CR$ \tabularnewline
\hline 
\hline 
\multirow{5}{*}{EIRK1} & 8 & 2.7427E+00 & -- & 2.7372E+00 & -- \tabularnewline
\cline{2-6} \cline{3-6} \cline{4-6} \cline{5-6} \cline{6-6}
 & 16 & 8.3854E--01 & 1.7096E+00 & 8.3422E--01 & 1.7142E+00\tabularnewline
\cline{2-6} \cline{3-6} \cline{4-6} \cline{5-6} \cline{6-6} 
 & 32 & 3.0339E--01 & 1.4667E+00 & 2.9734E--01 & 1.4882E+00\tabularnewline
\cline{2-6} \cline{3-6} \cline{4-6} \cline{5-6} \cline{6-6} 
 & 64 & 1.0627E--01 & 1.5134E+00 & 9.3312E--02 & 1.6720E+00\tabularnewline
 \cline{2-6} \cline{3-6} \cline{4-6} \cline{5-6} \cline{6-6} 
 & 128 & 4.7160E--02 & 1.1720E+00 & 6.3001E--03 & 3.8886E+00\tabularnewline
 \hline
\end{tabular}
\caption{\label{tab:example22} Temporal accuracy for problem~\eqref{eq:example2} with  contrast of $10^{3}$ and final time of $T=0.2$ with varying time steps $\Nt$ and fixed coarse size of $H=\tfrac{1}{8}$ and $4$ basis functions.}
\end{table}

\begin{figure}[!h]
\centering
\subfloat[EIexa2][EIRK1.]{\includegraphics[width=0.30\textwidth]{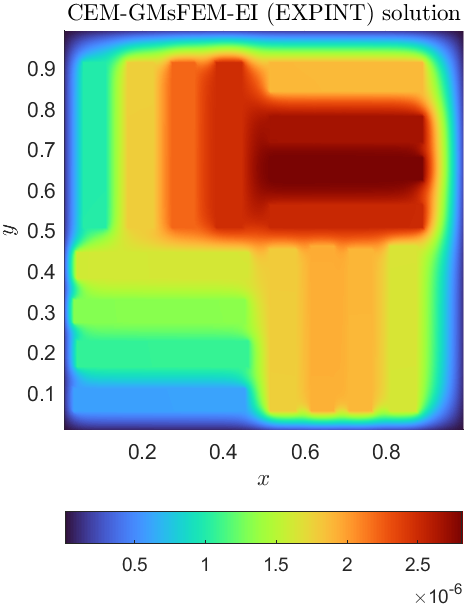}\label{fig:EIexa2}}
\subfloat[FDexa2][FDBE.]{\includegraphics[width=0.30\textwidth]{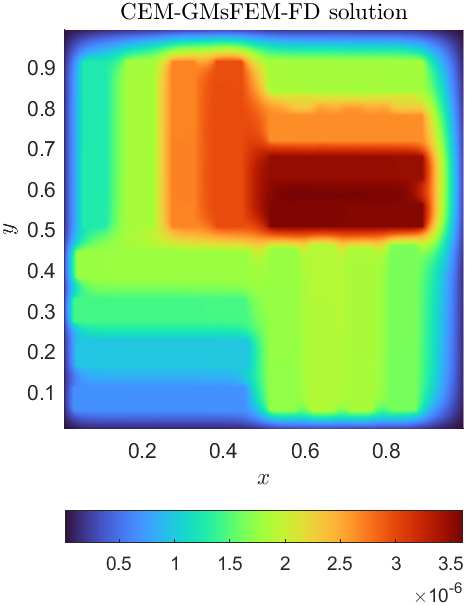}\label{fig:FDexa2}}
\subfloat[RFexa2][Reference solution.]{\includegraphics[width=0.30\textwidth]{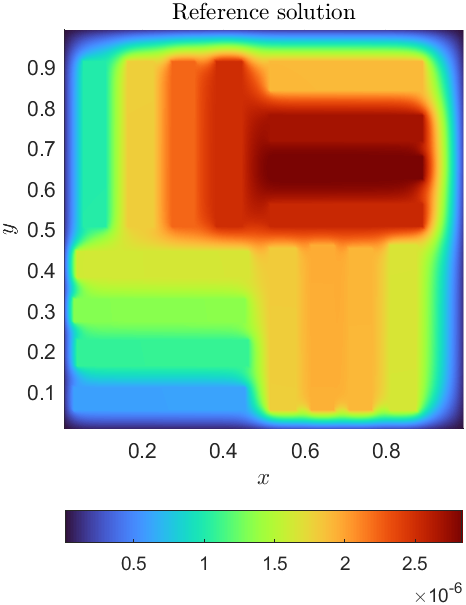}\label{fig:RFexa2}}
\caption{\label{fig:example2} (a) CEM-GMsFEM-EIRK1, (b) CEM-GMsFEM-FDBE, and (c) the reference solution of problem~\eqref{eq:example2} at final time $T=0.2$, using $H=\tfrac{1}{8}$, $4$ local multiscale basis functions, and $m=4$ oversampling layers.}
\end{figure}
\end{example}

In the examples below, we shall use a second-order explicit exponential Runge-Kutta and Crank-Nicolson schemes denoted by CEM-GMsFEM-EIRK22 and CEM-GMsFEM-FDCN, respectively.

\begin{example}
\label{example3}
We consider the problem:
\begin{equation}
\label{eq:example3}
\left\{\begin{split}
\pt u -\dive(\k_{3}(\x)\gd u)& = \frac{1}{\epsilon^{2}}(u-u^{3}),\quad\mbox{in }\Om\times[0,T],\\
u(x_{1},x_{2},t)& = 0,\quad\mbox{on }\pOm\times[0,T],\\
u(x_{1},x_{2},t=0)& = \epsilon x_{1}(1-x_{1})x_{2}(1-x_{2}),\quad\mbox{on }\Om.
\end{split}\right.
\end{equation}
Here, $\epsilon=0.1$ measures the interface thickness and we use the high-contrast media $\k_{3}$ with the value of $10^{4}$ in the high-contrast channels (see Figure~\ref{fig:perm3}). The final time of this simulation is $T=0.016$.  We consider $100$ and $500$ time steps for spatial accuracy for CEM-GMsFEM-ERK22 and CEM-GMsFEM-FDCN, respectively. We also consider uniform refined coarse grids with $H=\tfrac{1}{2},\tfrac{1}{4},\tfrac{1}{8}$ and $\tfrac{1}{16}$. The reference solution is approximated in the fine-scale grid using the Backward Euler scheme with $1000$ time steps. The errors for spatial accuracy in the $\L^{2}$ and $\H^{1}$-norms are reported in Table~\ref{tab:example31}. We notice that the spatial convergence rates for CEM-GMsFEM-EIRK22 are higher than one in both norms, as expected in contrast with CEM-GMsFEM-FDCN, which fails to maintain spatial accuracy in $\H^{1}$-norm. 
\begin{table}
\centering
\begin{tabular}{|c|c|c|c|c|c|c|}
\hline 
Scheme & $H$ & $m$ & $\varepsilon_{a}$ & $CR$ & $\varepsilon_{0}$ & $CR$ \tabularnewline
\hline 
\hline 
\multirow{4}{*}{EIRK22} & $\tfrac{1}{2}$ & 2 & 1.8813E--01 & -- & 4.4512E--02 & -- \tabularnewline
\cline{2-7} \cline{3-7} \cline{4-7} \cline{5-7} \cline{6-7} \cline{7-7}
 & $\frac{1}{4}$ & 3 & 1.3651E--01 & 4.6267E--01 & 3.0293E--02 & 5.5520E--01\tabularnewline
\cline{2-7} \cline{3-7} \cline{4-7} \cline{5-7} \cline{6-7} \cline{7-7}
 & $\frac{1}{8}$ & 4 & 4.5901E--02 & 1.5724E+00 & 1.0694E--02 & 1.5020E+00\tabularnewline
\cline{2-7} \cline{3-7} \cline{4-7} \cline{5-7} \cline{6-7} \cline{7-7}
 & $\frac{1}{16}$ & 5 & 1.7469E--02 & 1.3937E+00 & 4.1275E--03 & 1.3735E+00\tabularnewline
\hline 
\multirow{4}{*}{FDCN} & $\frac{1}{2}$ & 2 & 1.8865E--01 & -- & 4.1144E--02 & --\tabularnewline
\cline{2-7} \cline{3-7} \cline{4-7} \cline{5-7} \cline{6-7} \cline{7-7}
 & $\frac{1}{4}$ & 3 & 1.9529E--01 & 4.6429E--01 & 2.6865E--02 & 6.1495E--01\tabularnewline
\cline{2-7} \cline{3-7} \cline{4-7} \cline{5-7} \cline{6-7} \cline{7-7}
 & $\frac{1}{8}$ & 4 & 4.6756E--02 & 1.5481E+00 & 4.8988E--03 & 2.4552E+00\tabularnewline
\cline{2-7} \cline{3-7} \cline{4-7} \cline{5-7} \cline{6-7} \cline{7-7}
 & $\frac{1}{16}$ & 5 & 8.7217E-01 & -4.2213E+00 & 2.0979E-03 & 1.2234E+00\tabularnewline
\hline 
\end{tabular}
\caption{\label{tab:example31} Spatial convergence rate for problem~\eqref{eq:example3} at the final time $T=0.016$ with varying coarse grid size $H$ and oversampling coarse layers $m$ using a contrast of $10^{4}$.}
\end{table}

In Table~\ref{tab:example32}, we show that the temporal accuracy is about $1$. This is probably due to the influence of spatial accuracy, which coincides very well with the estimates given in Theorems~\ref{thm:RK22dt2H} and \ref{thm:RK22dt2H2}. Figure~\ref{fig:example3} depicts the solution profiles at the final instant $T=0.016$ using the two schemes and reference solution for the problem \eqref{eq:example3}.

\begin{table}
\centering
\begin{tabular}{|c|c|c|c|c|c|}
\hline 
Scheme & $\Nt$ & $\varepsilon_{a}$ & $CR$ & $\varepsilon_{0}$ & $CR$\tabularnewline
\hline 
\hline 
\multirow{5}{*}{EIRK22} & 8 & 1.4146E--01 & -- & 1.3598E--01 & -- \tabularnewline
\cline{2-6} \cline{3-6} \cline{4-6} \cline{5-6} \cline{6-6}
 & 16 & 8.0467E--02 & 8.1401E--01 & 6.8636E--02 & 9.8635E--01\tabularnewline
\cline{2-6} \cline{3-6} \cline{4-6} \cline{5-6} \cline{6-6} 
 & 32 & 5.5277E--02 & 5.4172E--01 & 3.4029E--02 & 1.0122E+00\tabularnewline
\cline{2-6} \cline{3-6} \cline{4-6} \cline{5-6} \cline{6-6} 
 & 64 & 2.7406E--02 & 1.0122E+00 & 1.6740E--02 & 1.0234E+00\tabularnewline
 \cline{2-6} \cline{3-6} \cline{4-6} \cline{5-6} \cline{6-6} 
 & 128 & 1.3411E--02 & 1.0310E+00 & 8.4554E--03 & 9.8542E--01\tabularnewline
\hline 
\multirow{5}{*}{FDCN} & 8 & 6.8599E+00 & -- & 2.0627E--01 & -- \tabularnewline
\cline{2-6} \cline{3-6} \cline{4-6} \cline{5-6} \cline{6-6}
 & 16 & 6.6305E+00 & 4.9063E--02 & 1.1583E--01 & 8.3254E--01\tabularnewline
\cline{2-6} \cline{3-6} \cline{4-6} \cline{5-6} \cline{6-6} 
 & 32 & 5.9835E+00 & 1.4813E--01 & 6.2416E--02 & 8.9204E--01\tabularnewline
\cline{2-6} \cline{3-6} \cline{4-6} \cline{5-6} \cline{6-6} 
 & 64 & 4.6921E+00 & 3.5076E--01 & 3.2232E--02 & 9.5340E--01\tabularnewline
 \cline{2-6} \cline{3-6} \cline{4-6} \cline{5-6} \cline{6-6} 
 & 128 & 2.6097E+00 & 8.4632E--01 & 1.5789E--02 & 1.0296E+00\tabularnewline
\hline 
\end{tabular}
\caption{\label{tab:example32} Temporal accuracy for problem~\eqref{eq:example3} with contrast of $10^{4}$ and final time of $T=0.016$ with varying time steps $\Nt$ and fixed coarse size of $H=\frac{1}{8}$ and $4$ basis functions.}
\end{table}

\begin{figure}[!h]
\centering
\subfloat[EIexa3][EIRK22.]{\includegraphics[width=0.30\textwidth]{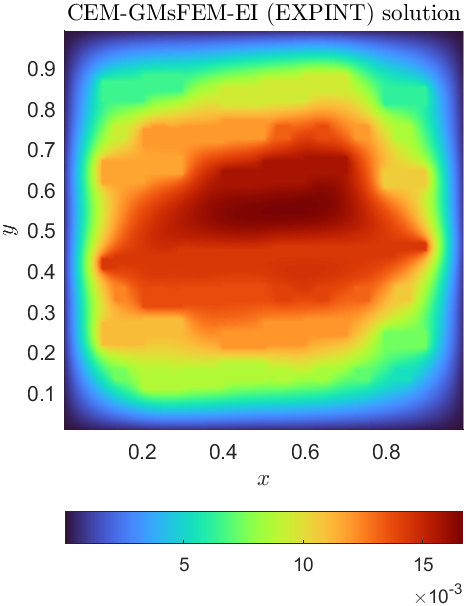}\label{fig:EIexa3}}
\subfloat[FDexa3][FDCN.]{\includegraphics[width=0.30\textwidth]{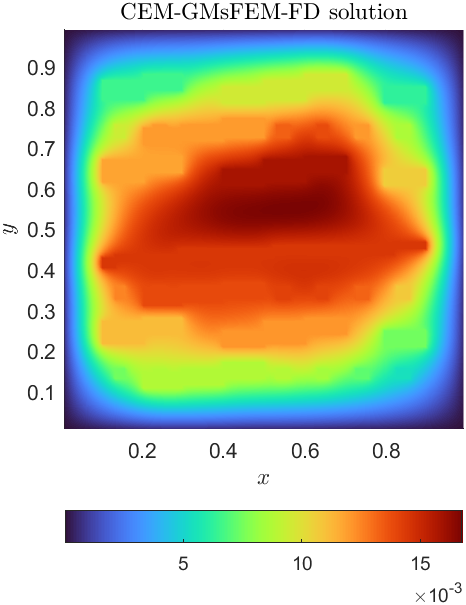}\label{fig:FDexa3}}
\subfloat[RFexa3][Reference solution.]{\includegraphics[width=0.30\textwidth]{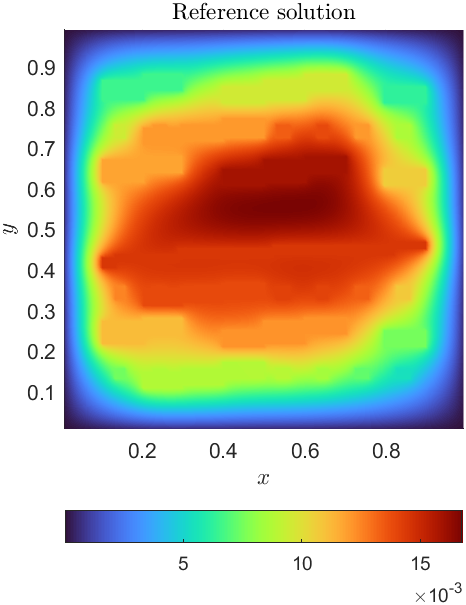}\label{fig:RFexa3}}
\caption{\label{fig:example3} (a) CEM-GMsFEM-EIRK22 (b) CEM-GMsFEM-FDCN, with $100$ and $500$ time-steps, respectively, and (c) the reference solution of problem~\eqref{eq:example3} at final time $T=0.016$, using coarse grid size $H=\tfrac{1}{8}$, and $4$ local multiscale basis functions, with $m=4$ oversampling layers.}
\end{figure}
\end{example}

\begin{example}
\label{example4}
We consider the problem:
\begin{equation}
\label{eq:example4}
\left\{\begin{split}
\pt u -\dive(\k_{4}(\x)\gd u)& = \frac{1}{\epsilon^{2}}(u-u^{3}),\quad\mbox{in }\Om\times[0,T],\\
u(x_{1},x_{2},t)& = 0,\quad\mbox{on }\pOm\times[0,T],\\
u(x_{1},x_{2},t=0)& = \tanh\left( \frac{0.25-\sqrt{(x_{1}-0.5)^{2}-(x_{2}-0.5)^{2}}}{\sqrt{\epsilon}}\right),\quad\mbox{on }\Om.
\end{split}\right.
\end{equation}
Here, $\epsilon=0.05$ measures the interface thickness and we use the high-contrast media $\k_{4}$ with the value of $10^{4}$ in the high-contrast channels (see Figure~\ref{fig:perm4}). Similarly to Example~\ref{example3}, we set the final time of $T=0.016$ and consider $100$ and $500$ time steps for CEM-GMsFEM-ERK22 and CEM-GMsFEM-FDCN, respectively. The reference solution is approximated in the fine-scale grid using the backward Euler scheme with $1000$ time steps. The errors for spatial accuracy in the $\L^{2}$ and $\H^{1}$-norms are reported in Table~\ref{tab:example41}. We notice that the spatial convergence rates for CEM-GMsFEM-EIRK22 and CEM-GMsFEM-FDCN are higher than one, but the first is slightly higher than the second. Table~\ref{tab:example42} reports the temporal convergence, which is about $1$ in both norms, but the CEM-GMsFEM-EIRK22 is slightly higher than the other one.  Figure~\ref{fig:example4} depicts the solution profiles at the final instant $T=0.016$ using the two schemes.
\begin{table}
\centering
\begin{tabular}{|c|c|c|c|c|c|c|}
\hline 
Scheme & $H$ & $m$ &$\varepsilon_{a}$ & $CR$ & $\varepsilon_{0}$ & $CR$\tabularnewline
\hline 
\hline 
\multirow{4}{*}{EIRK22} & $\tfrac{1}{2}$ & 2 & 1.8826E--01 & -- & 	3.9401E--02 & -- \tabularnewline
\cline{2-7} \cline{3-7} \cline{4-7} \cline{5-7} \cline{6-7} \cline{7-7}
 & $\frac{1}{4}$ & 3 & 1.6799E--01 & 1.6432E--01 & 	3.0689E--02 & 3.6048E--01\tabularnewline
\cline{2-7} \cline{3-7} \cline{4-7} \cline{5-7} \cline{6-7} \cline{7-7}
 & $\frac{1}{8}$ & 4 & 8.0971E--02 & 1.0529E+00 & 	1.0951E--02 & 	1.4866E+00\tabularnewline
\cline{2-7} \cline{3-7} \cline{4-7} \cline{5-7} \cline{6-7} \cline{7-7}
 & $\frac{1}{16}$ & 5 & 2.3621E--02 & 1.7773E+00 & 	4.3121E--03 & 	1.3446E+00\tabularnewline
\hline 
\multirow{4}{*}{FDCN} & $\frac{1}{2}$ & 2 & 1.8760E--01 & -- & 4.3753E--02 & --\tabularnewline
\cline{2-7} \cline{3-7} \cline{4-7} \cline{5-7} \cline{6-7} \cline{7-7}
 & $\frac{1}{4}$ & 3 & 1.6741E--01 & 1.6425E--01 & 	3.5695E--02 & 	2.9364E-01\tabularnewline
\cline{2-7} \cline{3-7} \cline{4-7} \cline{5-7} \cline{6-7} \cline{7-7}
 & $\frac{1}{8}$ & 4 & 8.1543E--02 & 1.0378E+00 & 1.8700E--02 & 	9.3262E-01\tabularnewline
\cline{2-7} \cline{3-7} \cline{4-7} \cline{5-7} \cline{6-7} \cline{7-7}
 & $\frac{1}{16}$ & 5 & 2.8306E--02 & 1.5264E+00 & 	1.4437E--02 & 	3.7328E--01\tabularnewline
\hline 
\end{tabular}
\caption{\label{tab:example41}  Spatial convergence rate for problem~\eqref{eq:example4} at the final time $T=0.016$ with varying coarse grid size $H$ and oversampling coarse layers $m$ using a contrast of $10^{4}$.}
\end{table}
 
\begin{table}
\centering
\begin{tabular}{|c|c|c|c|c|c|}
\hline 
Scheme & $\Nt$ & $\varepsilon_{a}$ & $CR$ & $\varepsilon_{0}$ & $CR$\tabularnewline
\hline 
\hline 
\multirow{5}{*}{EIRK22} & 8 & 5.8435E--01 & -- &	5.7640E--01 & -- \tabularnewline
\cline{2-6} \cline{3-6} \cline{4-6} \cline{5-6} \cline{6-6}
 & 16 & 3.0472E--01 & 9.3934E--01 & 	2.9074E--01 & 	9.8732E--01\tabularnewline
\cline{2-6} \cline{3-6} \cline{4-6} \cline{5-6} \cline{6-6} 
 & 32 & 1.5125E--01 & 1.0105E+00 & 	1.2894E--01 & 	1.1730E+00\tabularnewline
\cline{2-6} \cline{3-6} \cline{4-6} \cline{5-6} \cline{6-6} 
 & 64 & 9.4282E--02 & 6.8196E--01 & 	5.3729E--02 & 	1.2630E+00\tabularnewline
 \cline{2-6} \cline{3-6} \cline{4-6} \cline{5-6} \cline{6-6} 
 & 128 & 5.1733E--02 & 8.6590E--01 & 	2.0252E--02 & 1.4076E+00\tabularnewline
\hline 
\multirow{5}{*}{FDCN} & 8 & 8.5341E--01 & -- & 	8.4866E--01 & -- \tabularnewline
\cline{2-6} \cline{3-6} \cline{4-6} \cline{5-6} \cline{6-6}
 & 16 & 6.5339E--01 & 3.8530E--01	& 6.4422E--01 & 3.9763E--01\tabularnewline
\cline{2-6} \cline{3-6} \cline{4-6} \cline{5-6} \cline{6-6} 
 & 32 & 4.1568E--01 & 6.5245E--01 & 	4.0038E--01 & 6.8616E--01\tabularnewline
\cline{2-6} \cline{3-6} \cline{4-6} \cline{5-6} \cline{6-6} 
 & 64 & 2.3385E--01 & 8.2991E--01 & 	2.1352E--01 & 9.0701E--01\tabularnewline
 \cline{2-6} \cline{3-6} \cline{4-6} \cline{5-6} \cline{6-6} 
 & 128 & 1.3157E--01 & 8.2966E-01 & 1.0372E--01 & 1.0416E+00\tabularnewline
\hline 
\end{tabular}
\caption{\label{tab:example42} Temporal convergence rate for problem~\eqref{eq:example4} with contrast of $10^{4}$ and final time $T=0.016$ with varying time steps $\Nt$ and fixed coarse size of $H=\frac{1}{8}$ and $4$ basis functions.}
\end{table}

\begin{figure}[!h]
\centering
\subfloat[EIexa4][EIRK22.]{\includegraphics[width=0.30\textwidth]{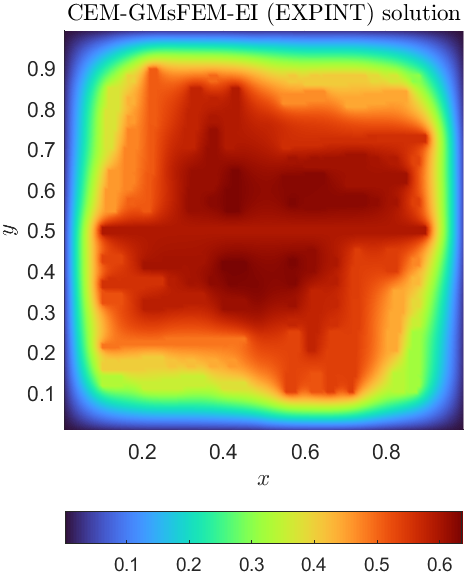}\label{fig:EIexa4}}
\subfloat[FDexa4][FDCN.]{\includegraphics[width=0.30\textwidth]{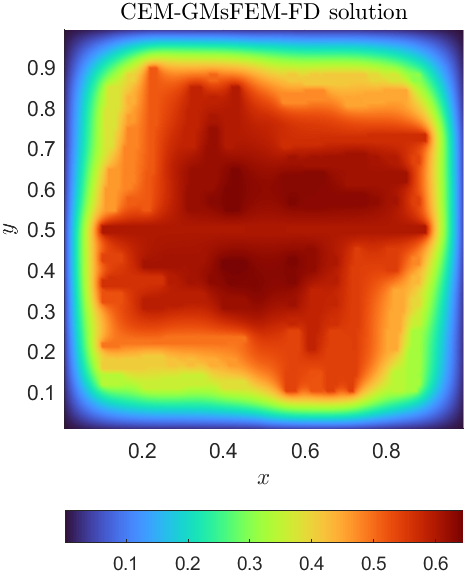}\label{fig:FDexa4}}
\subfloat[RFexa4][Reference solution.]{\includegraphics[width=0.30\textwidth]{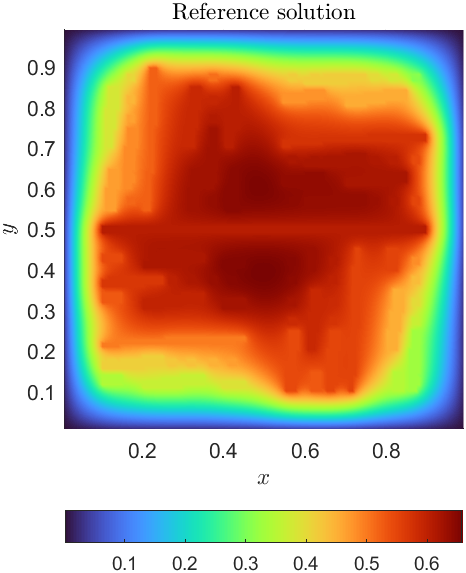}\label{fig:RFexa4}}
\caption{\label{fig:example4} (a) CEM-GMsFEM-EIRK22 (b) CEM-GMsFEM-FDCN, with $100$ and $500$ time-steps, respectively, and (c) the reference solution of problem~\eqref{eq:example4} at final time $T=0.016$, using coarse grid size $H=\frac{1}{8}$,  and $4$ local multiscale basis functions, with $m=4$ oversampling layers.}
\end{figure}

\end{example}


\begin{example}
\label{example5}
Finally, we consider the reaction-diffusion system:
\begin{equation}
\label{eq:example5}
\left\{\begin{split}
\pt u -\dive(\k_{3}(\x)\gd u)& = u-u^{3}-v,\quad\mbox{in }\Om_{1}\times[0,T],\\
\pt v -\dive(\k_{1}(\x)\gd v)& = u-v,\quad\mbox{in }\Om_{2}\times[0,T],\\
u(x_{1},x_{2},t)=v(x_{1},x_{2},t)& = 0,\quad\mbox{on }\pOm_{i}\times[0,T],\quad \mbox{for }i=1,2,\\
u(x_{1},x_{2},t=0)& = 0.05\sin(x_{1})\sin(x_{2}),\quad\mbox{on }\Om_{1},\\
v(x_{1},x_{2},t=0)& = \sin(\pi(x_{1}-0.25))\cos(2\pi(x_{2}-0.125)),\quad\mbox{on }\Om_{2}.
\end{split}\right.
\end{equation}
Here, we use the high-contrast media $\k_{3}$ in $\Om_{1}$ and $\k_{4}$ in $\Om_{2}$ with the value of $10^{4}$. The final time of this simulation is $T=0.016$, and we consider $100$ time steps for CEM-GMsFEM-ERK22. The reference solution is approximated in the fine-scale grid using the backward Euler scheme with $1000$ time steps. Table \ref{tab:example51} shows the convergence behavior concerning the coarse mesh size in $\H^{1}$ and $\L^{2}$ for the problem~\eqref{eq:example5} at the final time $T$. We only use $4$ basis functions on each coarse block with varying coarse grid sizes $H=\tfrac{1}{2},\tfrac{1}{4},\tfrac{1}{8}$,  and $\tfrac{1}{16}$, associated with some appropriate oversampling layers $m$. We observe that the spatial accuracy is higher than one for solutions $u$ and $v$, which matches our theoretical results. Table~\ref{tab:example52} shows the temporal convergence in both $\L^{2}$ and $\H^{1}$-norms.  Therefore, we have a good performance of the CEM-GMsFEM-EIRK22 scheme. Figure~\ref{fig:example5} depicts the solution profiles at the final instant $T=0.016$ using the two schemes.

\begin{table}
\centering
\begin{tabular}{|c|c|c|c|c|c|c|}
\hline 
Solution & $H$ & $m$ & $\varepsilon_{a}$ & $CR$ & $\varepsilon_{0}$ & $CR$\tabularnewline
\hline 
\hline 
\multirow{4}{*}{$u$} & $\tfrac{1}{2}$ & 2 & 2.0263E--01 & -- & 	3.4693E--02
 & -- \tabularnewline
\cline{2-7} \cline{3-7} \cline{4-7} \cline{5-7} \cline{6-7} \cline{7-7}
 & $\frac{1}{4}$ & 3 & 1.7844E--01 & 1.8337E--01 & 1.8615E--02 & 8.9816E--01\tabularnewline
\cline{2-7} \cline{3-7} \cline{4-7} \cline{5-7} \cline{6-7} \cline{7-7}
 & $\frac{1}{8}$ & 4 & 1.0640E--01 & 7.4599E--01 & 	6.4657E--03 & 	1.5256E+00\tabularnewline
\cline{2-7} \cline{3-7} \cline{4-7} \cline{5-7} \cline{6-7} \cline{7-7}
 & $\frac{1}{16}$ & 5 & 2.9204E--02 & 1.8652E+00 & 	1.8762E--03 & 	1.7850E+00\tabularnewline
\hline 
\multirow{4}{*}{$v$} & $\frac{1}{2}$ & 2 & 3.3754E--01 & -- & 1.3785E--01 & --\tabularnewline
\cline{2-7} \cline{3-7} \cline{4-7} \cline{5-7} \cline{6-7} \cline{7-7}
 & $\frac{1}{4}$ & 3 & 2.7226E--01 & 3.1005E--01 & 	9.7684E--02 & 	4.9688E--01\tabularnewline
\cline{2-7} \cline{3-7} \cline{4-7} \cline{5-7} \cline{6-7} \cline{7-7}
 & $\frac{1}{8}$ & 4 & 7.6753E--02 & 1.8267E+00 & 1.1111E--02 & 	3.1362E+00\tabularnewline
\cline{2-7} \cline{3-7} \cline{4-7} \cline{5-7} \cline{6-7} \cline{7-7}
 & $\frac{1}{16}$ & 5 & 2.0903E--02 & 1.8765E+00 & 3.3035E--03 & 	1.7499E+00\tabularnewline
\hline 
\end{tabular}
\caption{\label{tab:example51}  Spatial convergence rate for problem~\eqref{eq:example5} at the final time $T=0.016$ with varying coarse grid size $H$ and oversampling coarse layers $m$ using a contrast of $10^{4}$.}
\end{table}

\begin{table}
\centering
\begin{tabular}{|c|c|c|c|c|c|}
\hline 
Solution & $\Nt$ & $\varepsilon_{a}$ & $CR$ & $\varepsilon_{0}$ & $CR$\tabularnewline
\hline 
\hline 
\multirow{5}{*}{$u$} & 8 & 1.3509E--01 & -- &	6.7057E--02 & -- \tabularnewline
\cline{2-6} \cline{3-6} \cline{4-6} \cline{5-6} \cline{6-6}
 & 16 & 1.1382E--01 & 1.1869E+00 & 	3.4229E--02 & 	1.9590E+00\tabularnewline
\cline{2-6} \cline{3-6} \cline{4-6} \cline{5-6} \cline{6-6} 
 & 32 & 1.0823E--01 & 1.0517E+00 & 	1.8507E--02 & 	1.8496E+00\tabularnewline
\cline{2-6} \cline{3-6} \cline{4-6} \cline{5-6} \cline{6-6} 
 & 64 & 1.0682E--01 & 1.0132E+00 & 	1.1017E--02 & 	1.6799E+00\tabularnewline
 \cline{2-6} \cline{3-6} \cline{4-6} \cline{5-6} \cline{6-6} 
 & 128 & 1.0647E--01  & 1.0033E+00 & 7.5750E--03 & 	1.4544E+00\tabularnewline
\hline 
\multirow{5}{*}{$v$} & 8 & 2.0157E--01 & -- & 	1.4416E--01 & -- \tabularnewline
\cline{2-6} \cline{3-6} \cline{4-6} \cline{5-6} \cline{6-6}
 & 16 & 1.1423E--01 & 1.7646E+00 & 6.4472E--02 & 2.2360E+00\tabularnewline
\cline{2-6} \cline{3-6} \cline{4-6} \cline{5-6} \cline{6-6} 
 & 32 & 8.5618E--02 & 1.3342E+00 & 2.8341E--02 & 2.2749E+00\tabularnewline
\cline{2-6} \cline{3-6} \cline{4-6} \cline{5-6} \cline{6-6} 
 & 64 & 7.8317E--02 & 1.0932E+00 & 1.3250E--02 & 2.1390E+00\tabularnewline
 \cline{2-6} \cline{3-6} \cline{4-6} \cline{5-6} \cline{6-6} 
 & 128 & 7.6890E--02 & 1.0186E+00 & 1.0147E--02 & 1.3058E+00\tabularnewline
\hline 
\end{tabular}
\caption{\label{tab:example52} Temporal convergence rate for problem~\eqref{eq:example5} at the final time $T=0.016$ with varying coarse grid size $H$ and oversampling coarse layers $m$ using a contrast of $10^{4}$.}
\end{table}

\begin{figure}[!h]
\centering
\subfloat[EIexa5][ERK22.]{\includegraphics[width=0.50\textwidth]{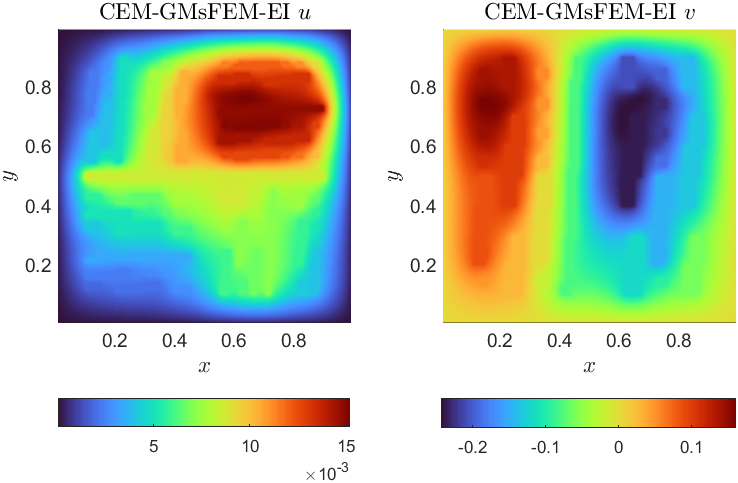}\label{fig:EIexa5}}
\subfloat[RFexa5][Reference solution.]{\includegraphics[width=0.50\textwidth]{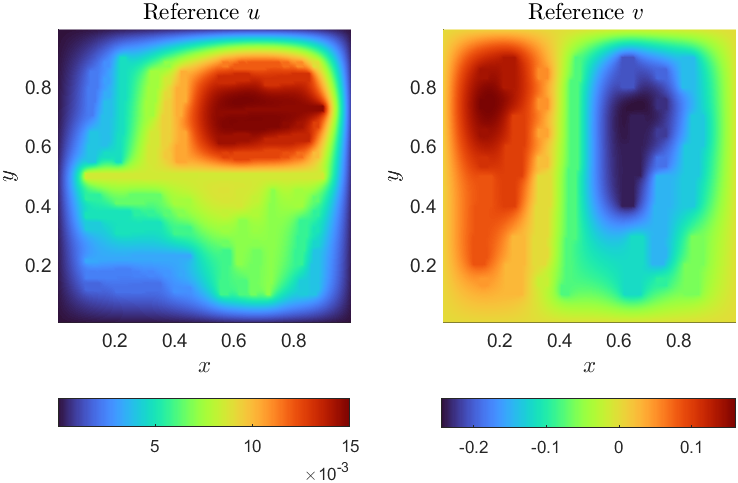}\label{fig:RFexa5}}
\caption{\label{fig:example5} (a) Second-order CEM-GMsFEM-ERK22 with $100$ time-steps and (b) The reference solution of problem~\eqref{eq:example5} at final time $T=0.016$, using $H=1/8$, $4$ local multiscale basis functions, and $m=4$ oversampling layers.}
\end{figure}
\end{example}

\section{Conclusion}
\label{sec:conclusion}
We have presented the explicit exponential integration CEM-GMsFEM for solving the semilinear parabolic problems in high-contrast media. As noted by \cite{contreras2023exponential}, the disparity of scales in the heterogeneous coefficients can affect the stability of the usual implicit schemes. In this work, we presented an alternative technique to handle this kind of scenario. We have used CEM-GMsFEM for spatial discretization. The first step is constructing the auxiliary space by solving local spectral problems. Next, we solve a constraint energy minimization problem to construct the multiscale basis functions in the oversampling regions. We introduce the first- and second-order explicit exponential Runge-Kutta integrators for temporal discretization. Rigorous convergence analysis of the proposed method shows optimal error estimates in the $\H^{1}$- and $\L^{2}$-norms with one and two Runge-Kutta stages, respectively. Extensive numerical examples all verify the spatial and temporal accuracy of the proposed scheme and confirm the theoretical results.

\section*{Acknowledgement}

The research of EC is partially supported by the Hong Kong RGC General Research Fund (Projects: 14305222 and 14304021). 

\appendix

\section{Convergence analysis}
\label{sec:convergence}
This section focuses on error estimates of fully discrete solutions produced by the proposed exponential integrator multiscale finite element method for solving the semilinear parabolic problem with homogeneous Dirichlet boundary conditions. We start with some notations and basic approximation results for the multiscale finite element approximations to estimate the error bound. We define the following norms for our analysis
\[
\|u\|_{a}^{2}:=\int_{\Om}\k|\gd u|^{2}d\x,\quad\|u\|_{s}^{2}:=\int_{\Om}\wk|u|^{2}d\x.
\]
In our estimates, we assume that the oversampling size is $m=\Or(\log(\k_{\max}/H))$, see \cite{chung2018constraint}. We introduce from \cite{thomee2006galerkin,huang2023efficient} some regularity and growth conditions for functions $f$ and $u$ to carry out the error analysis of our proposed method.

\begin{asp}
\label{asp:01}
The function $f(v)$ grows mildly with respect to $v$, i.e., there exists a number $p>0$ for $d=1,2$ or $p\in(0,2]$ for $d=3$ such that
\begin{equation}
\label{eq:asp1}
\left|\frac{\p f}{\p v}(v)\right|\preceq 1 + \|v\|^{p},\quad\feac v\in\R.
\end{equation}
\end{asp}
\begin{asp}
\label{asp:02}
The function $f(v(t))$ is smooth enough with respect to $t$,~i.e., for any given constant $C>0$, it holds
\begin{equation}
\label{eq:asp2}
\sum_{|\alpha|\leq2}\left|D^{\alpha}f(v(t))\right|\preceq 1,\quad\feac t\in[0,T],\quad v\in[-C,C].
\end{equation}
\end{asp}
\begin{asp}
\label{asp:03}
The exact solution $u(t)$ satisfies some of the following regularity conditions:
\begin{subequations}
\begin{align}
\sup_{0\leq t\leq T}\|u(t)\|_{2}&\preceq 1,\label{eq:asp3-1}\\
\sup_{0\leq t\leq T}\|\pt u(t)\|_{\infty}&\preceq 1,\label{eq:asp3-2}\\
\sup_{0\leq t\leq T}\|\ptt u(t)\|_{\infty}&\preceq 1\label{eq:asp3-3},
\end{align}
\end{subequations}
where the hidden constants may depend on $T$.
\end{asp}
We shall introduce the following result on the locally-Lipschitz continuity of the function $f$, see \cite{thomee2006galerkin}.

\begin{lem}
\label{lem:local-f}
Suppose that the function $f$ satisfies Assumption \ref{asp:01}, and the exact solution $u(t)$ satisfies \eqref{eq:asp3-1} in Assumption \ref{asp:03}. Then, $f$ is locally-Lipschitz continuous in a strip along the exact solution $u(t)$,~\ie, for any given positive constant $C$,
\begin{equation}
\label{eq:lem2-1}
\|f(v)-f(w)\|_{0}\preceq \|v-w\|_{a},
\end{equation}
for any $t\in[0,T]$ and $v,w\in\Vh$ satisfying $\max\{\|(v-u(t)\|_{a},\|w-u(t)\|_{a}\}\leq C$, where the hidden constant in \eqref{eq:lem2-1} may depend on $C$.
\end{lem}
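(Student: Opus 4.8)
The statement is a local Lipschitz bound for $f$ along the exact solution, with the $\L^2$-norm on the left and the energy norm on the right. The plan is to reduce everything to a pointwise/Sobolev estimate using the mean value theorem together with Assumption~\ref{asp:01} and a Sobolev embedding, and then absorb the extra $\|\cdot\|^p$-factors using the boundedness hypothesis $\max\{\|v-u(t)\|_a,\|w-u(t)\|_a\}\le C$ plus \eqref{eq:asp3-1}. First I would write
\[
f(v)-f(w)=\Big(\int_{0}^{1}\frac{\p f}{\p v}\big(w+\theta(v-w)\big)\,d\theta\Big)(v-w),
\]
so that pointwise $|f(v)-f(w)|\le \sup_{0\le\theta\le1}\big|\tfrac{\p f}{\p v}(w+\theta(v-w))\big|\,|v-w|$. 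Taking $\L^2$-norms and using Hölder with a conjugate pair $(r,r')$ gives
\[
\|f(v)-f(w)\|_{0}\le \Big\|\sup_{\theta}\big|\tfrac{\p f}{\p v}(w+\theta(v-w))\big|\Big\|_{L^{r}}\,\|v-w\|_{L^{2r'}}.
\]

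The second step is to control the first factor via Assumption~\ref{asp:01}: $\big|\tfrac{\p f}{\p v}(\xi)\big|\preceq 1+\|\xi\|^{p}$ — here I read $\|\xi\|$ as the relevant norm making the growth condition meaningful, so that along the strip $w+\theta(v-w)$ stays within a bounded ball around $u(t)$ in the norm controlling the embedding, and hence $1+\|w+\theta(v-w)\|^{p}\preceq 1$ with a constant depending on $C$ and on $\sup_t\|u(t)\|_2\preceq1$ from \eqref{eq:asp3-1}. This is exactly where the "strip along the exact solution" hypothesis is used: without the a priori bound on $v,w$ the factor is not controlled. The dimensional restriction on $p$ ($p>0$ for $d=1,2$, $p\in(0,2]$ for $d=3$) is what guarantees the relevant Sobolev embedding $\H^1\hookrightarrow L^{q}$ (with $q$ finite for $d=2$, $q=6$ for $d=3$) is available so that the $L^{2r'}$-norm of $v-w$ is dominated by $\|v-w\|_{1}$, and then by $\|v-w\|_a$ using the uniform ellipticity $\k_0\le\k\le\k_1$ together with the Poincaré inequality on $\H^1_0(\Om)$.

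The third step is therefore just to collect: $\|v-w\|_{L^{2r'}}\preceq \|v-w\|_{1}\preceq \|v-w\|_{a}$, which combined with the bounded first factor yields $\|f(v)-f(w)\|_{0}\preceq\|v-w\|_{a}$ with hidden constant depending on $C$, as claimed. I expect the main obstacle to be purely bookkeeping: choosing the Hölder exponents $r,r'$ compatibly with the growth exponent $p$ and the available Sobolev embedding in each dimension, and making precise in which norm Assumption~\ref{asp:01} is to be read so that the boundedness of $v,w$ in $\|\cdot-u(t)\|_a$ (hence, via embedding, in a strong enough norm) actually controls $1+\|\cdot\|^{p}$. Once the exponent arithmetic is pinned down, the estimate is a one-line chain; the substantive content is that uniform ellipticity converts the energy norm into the $\H^1$-norm and the dimension-dependent cap on $p$ keeps the embedding subcritical.
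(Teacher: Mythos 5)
The paper never proves this lemma---it is imported verbatim from Thom\'ee (2006)---and your argument is essentially the standard proof from that reference: the mean-value representation $f(v)-f(w)=\bigl(\int_0^1 f'(w+\theta(v-w))\,d\theta\bigr)(v-w)$, H\"older, the growth condition on $f'$, a Sobolev embedding made available by the dimension-dependent cap on $p$, and the strip hypothesis together with \eqref{eq:asp3-1} to bound the $p$-power factors, with uniform ellipticity ($\k\geq\k_0>0$) and Poincar\'e converting $\|\cdot\|_a$ into the $\H^1$-norm. Two clean-ups are needed to make your sketch airtight. First, Assumption \ref{asp:01} must be read pointwise in the real argument, $|f'(s)|\preceq 1+|s|^p$ for $s\in\R$ (a pointwise nonlinearity cannot be evaluated at a function-space norm); the correct chain is then $|f'(w+\theta(v-w))|\preceq 1+|v|^p+|w|^p$ pointwise, followed by taking an $\L^q$-norm of this factor and using the embedding plus the strip bound and $\|u(t)\|_{2}\preceq1$ to see it is $\preceq 1$ with constant depending on $C$. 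Second, for the product, H\"older gives $\|gh\|_{0}\leq\|g\|_{\L^{2r}}\|h\|_{\L^{2r'}}$ (not $\|g\|_{\L^{r}}$), and in $d=3$ one needs both $2r'\leq6$ and $2rp\leq6$ simultaneously, which is exactly where $p\leq2$ enters---the cap on $p$ is needed to control the $f'$-factor, not the $\|v-w\|_{\L^{2r'}}$ factor; in $d\leq2$ any finite exponent works. With these adjustments your proof is correct and coincides with the argument the paper relies on by citation.
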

\subsection{Fully-discrete error estimates}

This section presents the error between the exact solution $u(t_{n})$ and the fully discrete multiscale solution $\umsn$. For simplicity of presentation, we shall assume that the partition is uniform in $[0,T]$ with time step $\dt$. Let $\ums(t)$ be the multiscale solution of the semi-discrete problem \eqref{eq:sms-prob}, and $\umsn$ the multiscale fully-discrete solution produced by the exponential integrator multiscale finite element method \eqref{eq:ms-RK1} or \eqref{eq:ms-RK22}.


Let $\pu\in\Vms$ be the elliptic projection of the solution $u$ that satisfies
\begin{equation}
\label{eq:proju}
(\gd(u-\pu)(t),\gd v)=0,\quad\feac v\in\Vms,\mbox{and } t>0.
\end{equation}
The following lemma gives the error estimate of $\pu(t)$ for the semi-parabolic problem.

\begin{lem}
\label{lem:esti-proj}
Let $u$ be the solution of \eqref{eq:strong-prob}. For each $t\in[0,T]$, we define the elliptic projection $\pu\in\Vms$ satisfies \eqref{eq:proju}. Then,
\begin{subequations}
\begin{align}
\|(u-\pu)(t)\|_{a}&\preceq H\Lambda^{-\frac{1}{2}}\k_{\min}^{-\frac{1}{2}},\label{eq:lemE1-01}\\
\|(u-\pu)(t)\|_{0}&\preceq  H^{2}\Lambda^{-1}\k_{\min}^{-1}\label{eq:lemE1-02},
\end{align}
\end{subequations}
where $C$ is a constant independent of the mesh size $H$ and $\Lambda=\min_{1\leq i\leq N}\lambda_{L_{i}+1}^{(i)}$.
\end{lem}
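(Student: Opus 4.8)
The plan is to reinterpret $\pu(t)$ as a CEM-GMsFEM Galerkin solution of an elliptic problem and then invoke the a priori error theory of \cite{chung2018constraint}. Fix $t\in[0,T]$. Since $u(t)$ solves \eqref{eq:strong-prob}, we have $-\dive(\k\gd u(t))=g(t)$ in $\Om$, where $g(t):=f(u(t))-\pt u(t)\in\Lt$; equivalently $a(u(t),v)=(g(t),v)$ for all $v\in\Hoz$, and in particular for all $v\in\Vms$. Reading \eqref{eq:proju} in the energy inner product $a(\cdot,\cdot)$, the projection $\pu(t)\in\Vms$ then satisfies $a(\pu(t),v)=(g(t),v)$ for all $v\in\Vms$, i.e. $\pu(t)$ is exactly the CEM-GMsFEM approximation with load $g(t)$. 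Assumptions~\ref{asp:01} and \ref{asp:03} give $\|g(t)\|_{0}\leq\|f(u(t))\|_{0}+\|\pt u(t)\|_{0}\preceq 1$ uniformly in $t\in[0,T]$.

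To prove \eqref{eq:lemE1-01} I would invoke the energy-norm convergence estimate for CEM-GMsFEM established in \cite{chung2018constraint}. Its proof splits the error $u-\pu$ into (i) the error of the \emph{global} (non-localized) CEM projection, governed by the spectral gap through $\|v-\pi v\|_{s}\preceq\Lambda^{-1/2}\|v\|_{a}$ with $\Lambda=\min_{i}\lambda_{L_{i}+1}^{(i)}$, which together with a weighted Poincar\'e/Friedrichs inequality contributes a term $\preceq H\Lambda^{-1/2}\k_{\min}^{-1/2}\|g(t)\|_{0}$; and (ii) the \emph{localization} error incurred by minimizing the energy in \eqref{eq:main-argmin} over $\Kim$ instead of over $\Om$, which decays like $e^{-cm}$ because of the exponential decay of the multiscale basis functions. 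Under the standing hypothesis $m=\Or(\log(\k_{\max}/H))$ the localization term is absorbed into the first, and using $\|g(t)\|_{0}\preceq 1$ yields \eqref{eq:lemE1-01}.

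For \eqref{eq:lemE1-02} I would run an Aubin--Nitsche duality argument. Let $z\in\Hoz$ solve $-\dive(\k\gd z)=u-\pu$ in $\Om$ with $z=0$ on $\pOm$, and let $\hat z\in\Vms$ be its elliptic projection defined as in \eqref{eq:proju}. Since $u-\pu\in\Hoz$ and $a$ is symmetric, Galerkin orthogonality over $\Vms$ gives
\[
\|u-\pu\|_{0}^{2}=a(z,u-\pu)=a(z-\hat z,u-\pu)\leq\|z-\hat z\|_{a}\,\|u-\pu\|_{a}.
\]
Applying \eqref{eq:lemE1-01} to the pair $(z,\hat z)$, whose data is $u-\pu\in\Lt$, so that $\|z-\hat z\|_{a}\preceq H\Lambda^{-1/2}\k_{\min}^{-1/2}\|u-\pu\|_{0}$, and also to the factor $\|u-\pu\|_{a}$, then dividing by $\|u-\pu\|_{0}$, delivers \eqref{eq:lemE1-02}. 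Both estimates are uniform in $t$ since all the constants above are.

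The main obstacle is step (ii): showing that the oversampled basis functions $\psijms$ are exponentially (in $m$) close in the $a$-norm to their global energy-minimizing counterparts, and tracking the $\k$-dependence carefully so that the final constants collapse to the advertised $\k_{\min}^{-1/2}$ (rather than involving $\k_{\max}$). This is precisely the technical core of \cite{chung2018constraint}, valid under the oversampling choice $m=\Or(\log(\k_{\max}/H))$; rather than reproduce that analysis, I would cite it and only spell out the reduction to the elliptic problem and the duality step above.
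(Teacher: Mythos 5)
Your proposal is correct and takes essentially the same route as the paper: you recast $\pu(t)$ as the CEM-GMsFEM elliptic approximation with source $g(t)=f(u(t))-\pt u(t)$, bounded in $\L^{2}$ via Assumptions~\ref{asp:01} and \ref{asp:03}, obtain the energy bound \eqref{eq:lemE1-01} from the spectral quantity $\Lambda$ together with the oversampling assumption $m=\Or(\log(\k_{\max}/H))$, and prove \eqref{eq:lemE1-02} by the identical Aubin--Nitsche duality with the dual CEM projection. The only difference is presentational: the paper rederives the energy estimate in a few lines (testing with $u-\pu$, the weighted Cauchy--Schwarz bound $\|\wk^{-1/2}g\|_{0}\preceq H\k_{\min}^{-1/2}\|g\|_{0}$, and $\|u-\pu\|_{s}^{2}\leq\Lambda^{-1}\|u-\pu\|_{a}^{2}$), while you invoke the a priori estimate of \cite{chung2018constraint} as a black box and explicitly flag the localization error that the paper absorbs into its standing assumption on $m$.
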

\begin{proof}
Note that $u\in\Vz$ of \eqref{eq:strong-prob} satisfies
\[
a(u,v)=(f(u),v)-(\pt u,v)=(f(u)-\pt u,v),\quad\feac v\in\Vz,\quad\fall t\in[0,T].
\]
Thus, let $\pu(t)$ be the elliptic projection of $u$ in $\Vms$, that satisfies
\[
a(\pu,v)=a(u,v)=(f(u)-\pt u,v),\quad\feac v\in\Vms,\quad\fall t>0.
\]
For $v=u-\pu$ and by using Lemma 1 from \cite{chung2018constraint} we have,
\[
\|u-\pu\|_{a}^{2}=|a(u-\pu,u-\pu)|\leq \left|(f(u)-\pt u,u-\pu)\right|
 \leq \|\wk^{-1/2}(f(u)-\pt u)\|_{0}\|u-\pu\|_{s}.
\]
Observe that, by using the orthogonality of the eigenfunctions $\pij$ of \eqref{eq:eigen-prob}, we arrive at
\[
\|u-\pu\|_{s}^{2}=\sum_{i=1}^{\Nc}\|(I-\pi_{i})(u-\pu)\|_{s_{i}}^{2}\leq \Lambda^{-1}\sum_{i=1}^{\Nc}\|u-\pu\|_{a_{i}}^{2}=\Lambda^{-1}\|u-\pu\|_{a}^{2},
\]
where $\Lambda=\min_{1\leq i\leq N}\lambda_{L_{i}+1}^{(i)}$. Thus, by gathering the two expressions above, we have that
\[
\|u-\pu\|_{a}\leq \Lambda^{-\frac{1}{2}}\|\wk^{-\frac{1}{2}}(f(u)-\pt u)\|_{0}.
\]
By using $f(0)=0$, Assumption \ref{asp:01} and that $u$ satisfies the Assumption \ref{asp:03}, we obtain that $\|f(u)\|_{0}=\|f(u)-f(0)\|_{0}\preceq1$. Now, using \eqref{eq:asp3-2} and $|\gd\chi_{i}|=\Or(H^{-1})$, we can arrive at
\[
\|u-\pu\|_{a}\preceq H\Lambda^{-\frac{1}{2}}\k_{\min}^{-\frac{1}{2}},
\]
which is \eqref{eq:lemE1-01}. We shall use the duality argument for \eqref{eq:lemE1-02}. For $t\in[0,T]$, we define $w\in\Vz$ such that
\[
a(w,v)=(u-\pu,v),\quad\feac v\in\Vz,
\]
and define $\pw$ as the elliptic projection of $w$ in the space $\Vms$, that is,
\[
a(\pw,v)=(u-\pu,v),\quad\feac v\in\Vms.
\]
By using \eqref{eq:lemE1-01}, we obtain 
\[
\|u-\pu\|_{0}^{2}=a(w,u-\pu)=(w-\pw,u-\pu)\leq \|w-\pw\|_{a}\|u-\pu\|_{a}\preceq H^{2}\Lambda^{-1}\k_{\min}^{-1}\|u-\pu\|_{0}.
\]
Hence, dividing by $\|u-\pu\|_{0}$, we obtain \eqref{eq:lemE1-02}. This completes the proof.
\end{proof}

The following theorems give the $\H^{1}$ and $\L^{2}$ error estimates.
\begin{thm}
\label{thm:estiH1}
Let $u$ be the solution of \eqref{eq:weak-prob} and satisfies \eqref{eq:asp3-1} and \eqref{eq:asp3-2} in Assumption \ref{asp:03}. We define the elliptic projection $\pu\in\Vms$ satisfies \eqref{eq:proju}. There exists a constant $H_{0}>0$ such that if the coarse grid size $H\leq H_{0}$, then 
\begin{equation}
\label{eq:thmH1main}
\|(u-\ums)(\cdot,t)\|_{a}\preceq H\Lambda^{-\frac{1}{2}}\k_{\min}^{-\frac{1}{2}}+H^{2}\Lambda^{-1}\k_{\min}^{-1},
\end{equation}
where the hidden constant is independent of the coarse grid size $H$.
\end{thm}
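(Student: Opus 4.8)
I would follow the classical elliptic-projection (Ritz) argument for semilinear parabolic Galerkin methods, adapted here to the multiscale space $\Vms$. Split $u-\ums=\rho+\theta$ with $\rho:=u-\pu$ and $\theta:=\pu-\ums\in\Vms$, where $\pu$ is the elliptic projection \eqref{eq:proju}, which (as used in the proof of Lemma~\ref{lem:esti-proj}) satisfies $a(u-\pu,v)=0$ for all $v\in\Vms$. The component $\rho$ is controlled directly by Lemma~\ref{lem:esti-proj}: the bounds \eqref{eq:lemE1-01} and \eqref{eq:lemE1-02} already supply the two terms $H\Lambda^{-1/2}\k_{\min}^{-1/2}$ and $H^{2}\Lambda^{-1}\k_{\min}^{-1}$ appearing in \eqref{eq:thmH1main}. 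I would fix the discrete initial datum as $\ums(0)=\pu(0)$ so that $\theta(0)=0$; it then remains only to bound $\|\theta(t)\|_{a}$ by the same two quantities.

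Next I would derive the evolution equation for $\theta$. Testing the weak form \eqref{eq:weak-prob} and the semi-discrete multiscale problem \eqref{eq:sms-prob} against $v\in\Vms$ and subtracting gives $(\pt(u-\ums),v)+a(u-\ums,v)=(f(u)-f(\ums),v)$; since $a(\rho,v)=0$ for $v\in\Vms$, this reduces to
\begin{equation*}
(\pt\theta,v)+a(\theta,v)=(f(u)-f(\ums),v)-(\pt\rho,v),\qquad v\in\Vms.
\end{equation*}
Choosing $v=\pt\theta$ yields the energy identity $\|\pt\theta\|_{0}^{2}+\tfrac12\tfrac{d}{dt}\|\theta\|_{a}^{2}=(f(u)-f(\ums),\pt\theta)-(\pt\rho,\pt\theta)$. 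Using Cauchy--Schwarz and Young's inequality to absorb $\|\pt\theta\|_{0}^{2}$ into the left-hand side, together with the local Lipschitz bound $\|f(u)-f(\ums)\|_{0}\preceq\|u-\ums\|_{a}\le\|\rho\|_{a}+\|\theta\|_{a}$ from Lemma~\ref{lem:local-f}, one arrives at
\begin{equation*}
\tfrac{d}{dt}\|\theta\|_{a}^{2}\preceq\|\theta\|_{a}^{2}+\|\rho\|_{a}^{2}+\|\pt\rho\|_{0}^{2}.
\end{equation*}
A Gr\"onwall argument on $[0,T]$ with $\theta(0)=0$ then gives $\|\theta(t)\|_{a}^{2}\preceq\int_{0}^{T}(\|\rho\|_{a}^{2}+\|\pt\rho\|_{0}^{2})\,ds$, so that, after combining with Lemma~\ref{lem:esti-proj} and the triangle inequality $\|u-\ums\|_{a}\le\|\rho\|_{a}+\|\theta\|_{a}$, the estimate \eqref{eq:thmH1main} follows once the time-derivative bound $\|\pt\rho\|_{0}\preceq H^{2}\Lambda^{-1}\k_{\min}^{-1}$ is available.

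For that $\|\pt\rho\|_{0}$ bound I would differentiate \eqref{eq:proju} in $t$ to see that $\pt\pu$ is the elliptic projection of $\pt u$, and differentiate the equation in \eqref{eq:strong-prob} in $t$ to identify the source of $\pt u$ as $f'(u)\pt u-\ptt u$. Using $f(0)=0$ together with Assumptions~\ref{asp:01} and \ref{asp:03} (in particular $\|u\|_{2}\preceq1$, $\|\pt u\|_{\infty}\preceq1$ and the corresponding bound on $\ptt u$) one gets $\|f'(u)\pt u-\ptt u\|_{0}\preceq1$, so repeating verbatim the duality computation of Lemma~\ref{lem:esti-proj}, now with $\pt u$ in place of $u$, delivers $\|\pt\rho\|_{a}\preceq H\Lambda^{-1/2}\k_{\min}^{-1/2}$ and $\|\pt\rho\|_{0}\preceq H^{2}\Lambda^{-1}\k_{\min}^{-1}$.

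The main obstacle, and the reason the smallness assumption $H\le H_{0}$ is needed, is that Lemma~\ref{lem:local-f} gives the Lipschitz estimate for $f$ only inside a fixed tube $\|(u-\ums)(t)\|_{a}\le C$ around the exact solution, i.e.\ only while the very quantity one is estimating stays bounded. I would close this circular dependence by a continuation (bootstrap) argument: set $T^{\ast}=\sup\{\tau\le T:\|(u-\ums)(t)\|_{a}\le C\ \text{for all}\ t\le\tau\}$. Since $\|(u-\ums)(0)\|_{a}=\|\rho(0)\|_{a}\preceq H\Lambda^{-1/2}\k_{\min}^{-1/2}$, continuity gives $T^{\ast}>0$ for $H$ small; on $[0,T^{\ast}]$ the argument above applies and yields $\|(u-\ums)(t)\|_{a}\preceq H\Lambda^{-1/2}\k_{\min}^{-1/2}+H^{2}\Lambda^{-1}\k_{\min}^{-1}\le C/2$ as soon as $H\le H_{0}$, and continuity of $t\mapsto\|(u-\ums)(t)\|_{a}$ then rules out $T^{\ast}<T$. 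Hence the bound holds on all of $[0,T]$. A secondary technical point worth flagging is that the $\|\pt\rho\|_{0}$ estimate genuinely uses the second-order time regularity in Assumption~\ref{asp:03}, not only \eqref{eq:asp3-1}--\eqref{eq:asp3-2}.
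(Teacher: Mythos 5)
Your proof follows essentially the same route as the paper: the elliptic-projection splitting $u-\ums=\rho+\theta$, testing the error equation for $\theta$ with $\pt\theta$, Young's inequality plus the local Lipschitz bound on $f$, Gr\"onwall with $\theta(0)=0$, and then Lemma~\ref{lem:esti-proj} together with the triangle inequality. The only difference is that you spell out two points the paper leaves implicit, namely the bound $\|\pt\rho\|_{0}\preceq H^{2}\Lambda^{-1}\k_{\min}^{-1}$ obtained by differentiating the elliptic projection (and your correct remark that this tacitly invokes \eqref{eq:asp3-3}) and the continuation argument behind the smallness condition $H\leq H_{0}$, which the paper delegates to Thom\'ee's Theorem~14.2.
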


\begin{proof}
We denote that 
\[
u-\ums=(u-\pu)+(\pu-\ums):=\rho +\theta,\quad \fall t\in[0,T],
\]
where $\pu$ is the elliptic projection in the space $\Vms$ of the exact solution $u$. 
About $\theta$, by \eqref{eq:sms-prob}, we obtain 
\begin{align*}
(\pt\theta,v)+a(\theta,v)&=(\pt \pu,v)-(f(\ums),v)+a(\pu,v)\\
& = (\pt (\pu-u),v)+(\pt u,v)+a(u,v)-(f(\ums),v)\\
& = (f(u)-f(\ums),v)-(\pt \rho,v),
\end{align*}
for all $v\in\Vms$. Taking $v=2\pt\theta=2\pt(\pu-\ums)\in\Vms$, we find that
\[
2(\pt\theta,\pt\theta)+2a(\theta,\pt\theta)=2(f(u)-f(\ums),\pt\theta)-2(\pt \rho,\pt\theta).
\]
By using the Cauchy-Schwarz and Young's inequality, we get that
\begin{align*}
2\|\pt\theta\|^{2}_{0}+\frac{d}{dt}\|\theta\|^{2}_{a}&\leq 2(\|f(u)-f(\ums)\|_{0}+\|\pt\rho\|_{0})\|\pt\theta\|_{0}\\
&\leq \|f(u)-f(\ums)\|^{2}_{0}+\|\pt\rho\|^{2}_{0}+2\|\pt\theta\|^{2}_{0}.
\end{align*}
Then,
\[
\frac{d}{dt}\|\theta\|^{2}_{a}\leq \|f(u)-f(\ums)\|^{2}_{0}+\|\pt\rho\|^{2}_{0}.
\]
Following \citet[Theorem 14.2, pp.~246]{thomee2006galerkin}, with $t$ large sufficiently in $[0,T]$, there exist $H_{0}>0$ such that the coarse grid size $H\leq H_{0}$, then, we can find that
\begin{align*}
\frac{d}{dt}\|\theta\|^{2}_{a}&\leq \|f(u)-f(\ums)\|^{2}_{0}+\|\pt\rho\|^{2}_{0}\\
& \leq \|f(u)-f(\pu)\|^{2}_{0}+\|f(\pu)-f(\ums)\|^{2}_{0}+\|\pt\rho\|^{2}_{0}\\
& \preceq \|\rho\|^{2}_{a}+\|\theta\|^{2}_{a}+\|\pt\rho\|^{2}_{0},
\end{align*}
where in the last inequality, we used Assumption \ref{asp:01} and fact that $\|\pu\|_{a}\leq C\|u\|_{a}\preceq 1$. Integrating with respect to time, we obtain
\[
\|\theta(\cdot,t)\|^{2}_{a}\preceq \int_{0}^{t}(\|\rho\|^{2}_{a}+\|\theta\|^{2}_{a})dt+\int_{0}^{t}\|\pt\rho\|^{2}_{0}ds+\|\theta(\cdot,0)\|^{2}_{a}.
\]
Note that the initial condition implies $\theta(0):=(\pu-\ums)(0)=0$. Then, Gronwall's inequality yields
\[
\|\theta(\cdot,t)\|^{2}_{a}\preceq \int_{0}^{t}(\|\rho\|^{2}_{a}+\|\pt\rho\|^{2}_{0})ds.
\]
By Lemma \ref{lem:esti-proj}, we obtain that
\begin{equation}
\label{eq:thetah1}
\|(\pu-\ums)(\cdot,t)\|^{2}_{a}\preceq\int_{0}^{t}\left(\|u-\pu\|^{2}_{0}+\|\pt(u-\pu)\|^{2}_{0}\right)ds\preceq H^{2}\Lambda^{-1}\k_{\min}^{-1}+H^{4}\Lambda^{-2}\k_{\min}^{-2}.
\end{equation}
Here, the hidden constant depends on $T$. Finally, using the triangle inequality and gathering \eqref{eq:lemE1-01} and \eqref{eq:thetah1}, we then have
\[
\|(u-\ums)(\cdot,t)\|^{2}_{a}\leq\|(u-\pu)(\cdot,t)\|^{2}_{a}+\|(\pu-\ums)(\cdot,t)\|^{2}_{a}\preceq H^{2}\Lambda^{-1}\k_{\min}^{-1}+H^{4}\Lambda^{-2}\k_{\min}^{-2}.
\]
This finishes the proof.
\end{proof}

\begin{thm}
\label{thm:estiL2}
Under assumptions of Theorem \ref{thm:estiH1}, we then have,
\[
\|(u-\ums)(\cdot,t)\|_{0}\preceq H^{2}\Lambda^{-1}\k_{\min}^{-1}.
\]
where $C$ is a constant independent of the mesh size $H$ and $\Lambda=\min_{1\leq i\leq N}\lambda_{L_{i}+1}^{(i)}$.
\end{thm}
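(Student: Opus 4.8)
The plan is to follow the same decomposition $u-\ums = (u-\pu)+(\pu-\ums) =: \rho+\theta$ as in the proof of Theorem~\ref{thm:estiH1}, but now to estimate $\theta = \pu-\ums$ directly in the $\L^2$-norm rather than the energy norm. The estimate on $\rho$ is already supplied by \eqref{eq:lemE1-02} in Lemma~\ref{lem:esti-proj}, namely $\|\rho(t)\|_0 \preceq H^2\Lambda^{-1}\k_{\min}^{-1}$, so by the triangle inequality it suffices to show $\|\theta(t)\|_0 \preceq H^2\Lambda^{-1}\k_{\min}^{-1}$.

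For $\theta$, I would start from the same error equation derived in the proof of Theorem~\ref{thm:estiH1}, namely
\[
(\pt\theta,v)+a(\theta,v) = (f(u)-f(\ums),v)-(\pt\rho,v),\qquad \fall v\in\Vms .
\]
Now, instead of testing with $2\pt\theta$, I would test with $v=\theta$, obtaining
\[
\tfrac{1}{2}\tfrac{d}{dt}\|\theta\|_0^2 + \|\theta\|_a^2 = (f(u)-f(\ums),\theta)-(\pt\rho,\theta).
\]
Using the local Lipschitz bound from Lemma~\ref{lem:local-f} together with the splitting $f(u)-f(\ums) = (f(u)-f(\pu))+(f(\pu)-f(\ums))$, one controls $\|f(u)-f(\ums)\|_0 \preceq \|\rho\|_a + \|\theta\|_a$ (here one needs the a priori bound $\|\theta\|_a\le C$, which follows from Theorem~\ref{thm:estiH1} and Lemma~\ref{lem:esti-proj} once $H\le H_0$). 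Applying Cauchy–Schwarz and Young's inequality to absorb the $\|\theta\|_a$ contribution into the $\|\theta\|_a^2$ term on the left, and bounding the remaining cross terms by $\|\theta\|_0$, one arrives at a differential inequality of the form
\[
\tfrac{d}{dt}\|\theta\|_0^2 \preceq \|\theta\|_0^2 + \|\rho\|_a^2 + \|\pt\rho\|_0^2 .
\]
Since $\theta(0)=0$ by the choice of initial data, Gronwall's inequality then gives $\|\theta(t)\|_0^2 \preceq \int_0^t(\|\rho\|_a^2+\|\pt\rho\|_0^2)\,ds$. Invoking Lemma~\ref{lem:esti-proj} for both $\rho$ and $\pt\rho$ (the latter by differentiating \eqref{eq:proju} in time and repeating the argument, using Assumption~\ref{asp:03}) yields $\|\theta(t)\|_0 \preceq H^2\Lambda^{-1}\k_{\min}^{-1} + H^3\Lambda^{-3/2}\k_{\min}^{-3/2}$, whose leading term is $H^2\Lambda^{-1}\k_{\min}^{-1}$.

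The main obstacle I anticipate is the handling of the term $(f(u)-f(\ums),\theta)$: one would like a clean bound by $\|\theta\|_0^2$ plus data, but the local Lipschitz estimate of Lemma~\ref{lem:local-f} only delivers $\|f(v)-f(w)\|_0\preceq\|v-w\|_a$, forcing an energy-norm factor $\|\theta\|_a$ to appear. The resolution is to exploit the $+\|\theta\|_a^2$ term on the left-hand side of the energy identity to absorb $\varepsilon\|\theta\|_a^2$ via Young's inequality, paying with a $C_\varepsilon\|\theta\|_0^2$ term on the right — this is exactly the standard nonsymmetric duality trick for parabolic $\L^2$ estimates. A secondary, more technical point is that the estimate requires an a priori $\L^\infty$-in-time $\H^1$ bound on $\ums$ (so that $\ums$ stays in the Lipschitz strip around $u$ where Lemma~\ref{lem:local-f} applies); this is precisely what Theorem~\ref{thm:estiH1} provides, and it is the reason the constant $H_0$ and the hypothesis $H\le H_0$ are inherited here.
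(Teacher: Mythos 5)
There is a genuine gap, and it sits exactly at the point you flagged as the ``main obstacle.'' Your decomposition, error equation, and choice of test function $v=\theta$ coincide with the paper's proof, but your treatment of the nonlinear term loses a power of $H$. Because you only use the energy-norm Lipschitz bound $\|f(u)-f(\ums)\|_{0}\preceq\|\rho\|_{a}+\|\theta\|_{a}$ and absorb $\|\theta\|_{a}$ into the left-hand side, your Gronwall bound reads $\|\theta(t)\|_{0}^{2}\preceq\int_{0}^{t}\bigl(\|\rho\|_{a}^{2}+\|\pt\rho\|_{0}^{2}\bigr)ds$. But Lemma~\ref{lem:esti-proj} only gives $\|\rho\|_{a}\preceq H\Lambda^{-1/2}\k_{\min}^{-1/2}$, so $\int_{0}^{t}\|\rho\|_{a}^{2}ds\preceq H^{2}\Lambda^{-1}\k_{\min}^{-1}$ and, after taking the square root, $\|\theta(t)\|_{0}\preceq H\Lambda^{-1/2}\k_{\min}^{-1/2}$ --- first order, not the claimed $H^{2}\Lambda^{-1}\k_{\min}^{-1}+H^{3}\Lambda^{-3/2}\k_{\min}^{-3/2}$. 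Your final sentence of the estimate simply does not follow from the differential inequality you derived; the presence of $\|\rho\|_{a}^{2}$ (rather than $\|\rho\|_{0}^{2}$) on the right-hand side caps the rate at $O(H)$.

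The paper avoids this by keeping the whole argument in $\L^{2}$: after testing with $\theta$ and discarding $a(\theta,\theta)\geq0$, it bounds $\|f(u)-f(\ums)\|_{0}\preceq\|u-\ums\|_{0}\preceq\|\rho\|_{0}+\|\theta\|_{0}$, so Gronwall yields $\|\theta(t)\|_{0}\preceq\int_{0}^{t}\bigl(\|\rho\|_{0}+\|\pt\rho\|_{0}\bigr)ds$, and both integrands are $O(H^{2}\Lambda^{-1}\k_{\min}^{-1})$ by \eqref{eq:lemE1-02}. (Strictly, this uses an $\L^{2}$-to-$\L^{2}$ local Lipschitz property of $f$, which is stronger than the literal statement of Lemma~\ref{lem:local-f} but is the standard estimate under Assumption~\ref{asp:01} once $u$ and $\ums$ stay in a bounded strip, as in Thom\'ee's Theorem~14.2.) To repair your proof you must replace the energy-norm Lipschitz step by such an $\L^{2}$ Lipschitz bound (or a genuine duality argument); the Young-absorption trick by itself cannot recover the second-order rate, since the lost information is in $\rho$, not in $\theta$. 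Your observations about $\theta(0)=0$ and the need for the a priori strip bound via Theorem~\ref{thm:estiH1} and $H\leq H_{0}$ are correct and consistent with the paper.
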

\begin{proof}
Similar to Theorem \ref{thm:estiH1}, we shall denote
\[
u-\ums=(u-\pu)+(\pu-\ums):=\rho +\theta,\quad \fall t>0,
\]
where $\pu$ is the elliptic projection in the space $\Vms$ of the exact solution $u$. For $\theta$, taking $v=\theta=\pu-\ums\in\Vms$, we find that
\[
(\pt\theta,\theta)+a(\theta,\theta)=(f(u)-f(\ums),\theta)-(\pt \rho,\theta).
\]
By using $a(\theta,\theta)\geq 0$ and the Cauchy-Schwarz inequality, it follows that
\[
\frac{1}{2}\frac{d}{dt}\|\theta\|^{2}_{0}=\|\theta\|_{0}\frac{d}{dt}\|\theta\|_{0}\leq (\|f(u)-f(\ums)\|_{0}+\|\pt\rho\|_{0})\|\theta\|_{0}.
\]
By Lemma \ref{lem:local-f}, we obtain
\[
\frac{d}{dt}\|\theta\|_{0}\leq C\|u-\ums\|_{0}+\|\pt\rho\|_{0}\preceq(\|\rho\|_{0}+\|\theta\|_{0})+\|\pt\rho\|_{0}.
\]
Integrating in time and invoking Gronwall's inequality, we get
\[
\|\theta(\cdot,t)\|_{0}\preceq \int_{0}^{t}\|\rho\|_{0}ds +\int_{0}^{t}\|\pt\rho\|_{0}.
\]
By Assumption \ref{asp:01}, equations \eqref{eq:asp3-2},\eqref{eq:asp3-3} and  \eqref{eq:lemE1-02}, we obtain that
\begin{equation}
\label{eq:thetaL2}
\|(\pu-\ums)(\cdot,t)\|_{0}\preceq\int_{0}^{t}\left(\|u-\pu\|_{0}+\|\pt(u-\pu)\|_{0}\right)ds\preceq H^{2}\Lambda^{-1}\k_{\min}^{-1}.
\end{equation}
Finally, by using triangle inequality and gathering \eqref{eq:lemE1-02} and \eqref{eq:thetaL2},we obtain the bound
\[
\|(u-\ums)(\cdot,t)\|_{0}\leq\|(u-\pu)(\cdot,t)\|_{0}+\|(\pu-\ums)(\cdot,t)\|_{0}\preceq H^{2}\Lambda^{-1}\k_{\min}^{-1},
\]
which finishes the proof.
\end{proof}
%

We shall recall some definitions and results related to the semigroup $\{e^{-\dt\Lh}\}_{\dt\geq 0}$ and some terms used in the exponential Runge-Kutta schemes, which are important to the forthcoming analysis of the proposed multiscale finite element method. The following stability bounds for the semigroup $\{e^{-\dt\Lh}\}_{\dt\geq 0}$ are crucial in our analysis.

\begin{lem}[\citealp{hochbruck2005explicit}]
\label{lem:hochbruck2005explicit}
\,
\begin{enumerate}
\item For any given parameter $\gamma\geq 0$, it holds
\begin{equation}
\label{lem:eq01}
\|e^{-\dt\Lh}\|_{0}+\|(\dt)^{\gamma}\Lh^{\gamma}e^{-\dt\Lh}\|_{0}\preceq 1,\quad\feac\dt>0,\,\feac h>0.
\end{equation}
\item For any given parameter $0\leq\gamma\leq 1$, it holds
\begin{equation}
\label{lem:eq02}
\left\|(\dt)^{\gamma}\Lh^{\gamma}\sum_{j=1}^{n-1}e^{-j\dt\Lh}\right\|_{0}\preceq 1,\quad\feac\dt>0,\,\feac h>0.
\end{equation}
\item For any given parameter $0\leq\gamma\leq 1$, it holds
\begin{equation}
\label{lem:eq03}
\|\phi(-\dt\Lh)\|_{0}+\|(\dt)^{\gamma}\Lh^{\gamma}\phi(-\dt\Lh)\|_{0}\preceq 1,\quad\feac\dt>0,\,\feac h>0,
\end{equation}
where $\phi(-\dt\Lh)=\beta_{i}(-\dt\Lh)$ or $\phi(-\dt\Lh)=\alpha_{ij}(-\dt\Lh)$, $i,j=1,\dots,m$.
\end{enumerate}
\end{lem}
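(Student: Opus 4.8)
Although this is precisely the estimate of \citet{hochbruck2005explicit}, let me sketch how it is obtained directly in the present finite-dimensional setting. The plan is to pass from operator norms to scalar suprema through the spectral calculus for $\Lh$. First I would record that $\Lh:\Vh\to\Vh$, defined by $a(\uh,\vh)=\langle\Lh\uh,\vh\rangle$, is self-adjoint for the $\L^2$ inner product (symmetry of $a$) and positive definite: on $\Vh\subset\Hoz$ one has $a(v,v)=\int_\Om\k|\gd v|^2\,d\x\ge\k_0\|\gd v\|_0^2$, so Poincar\'e's inequality gives $\langle\Lh v,v\rangle\ge\mu_1\|v\|_0^2$ with $\mu_1\gtrsim\k_0$. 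Hence $\Lh$ admits an $\L^2$-orthonormal eigenbasis $\{(\mu_k,\phi_k)\}_{k=1}^{\Nf}$ with $0<\mu_1\le\cdots\le\mu_{\Nf}$, every function of $\Lh$ acts diagonally, and $\|g(\Lh)\|_0=\max_{1\le k\le\Nf}|g(\mu_k)|$. Each of the three claims then reduces to the uniform boundedness of an explicit scalar function of $x:=\dt\mu_k>0$.

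For item~1, one checks $\|e^{-\dt\Lh}\|_0=\max_k e^{-\dt\mu_k}\le1$ since $\mu_k>0$, and $\|(\dt)^\gamma\Lh^\gamma e^{-\dt\Lh}\|_0=\max_k(\dt\mu_k)^\gamma e^{-\dt\mu_k}\le\sup_{x>0}x^\gamma e^{-x}=(\gamma/e)^\gamma$, a constant depending only on $\gamma$ --- the familiar parabolic smoothing bound. For item~3, since each $\beta_i(-\dt\Lh)$ and $\alpha_{ij}(-\dt\Lh)$ is a fixed linear combination of the functions $\phi_k(-\dt\Lh)$ and $\phi_k(-c_i\dt\Lh)$, it suffices to bound the $\phi_k$. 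From the integral representation \eqref{eq:phi-function}, $0\le\phi_k(-x)\le\int_0^1\frac{\theta^{k-1}}{(k-1)!}\,d\theta=\frac1{k!}$ for all $x\ge0$, which settles the unweighted term and the weighted term on $0<x\le1$; for $x\ge1$, an induction from $\phi_0(-x)=e^{-x}$ and $\phi_1(-x)=(1-e^{-x})/x$ using the recurrence $\phi_k(-x)=\big(\tfrac1{(k-1)!}-\phi_{k-1}(-x)\big)/x$ together with $0\le\phi_{k-1}(-x)\le\tfrac1{(k-1)!}$ yields $0\le\phi_k(-x)\le\tfrac1{(k-1)!\,x}$, so $x^\gamma\phi_k(-x)\le\tfrac1{(k-1)!}x^{\gamma-1}\le\tfrac1{(k-1)!}$ whenever $\gamma\le1$. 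Taking $x=\dt\mu_k$ and maximising over $k$ closes item~3.

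Item~2 is the delicate one, because the bound must be uniform in the step count $n$; reducing as above, one must control $x^\gamma\sum_{j=1}^{n-1}e^{-jx}$ for $x=\dt\mu_k>0$. For $\gamma=1$ --- the case that enters the telescoping estimates behind \eqref{eq:ms-RK1}--\eqref{eq:ms-RK22} --- the geometric series and $e^x-1\ge x$ give $x\sum_{j=1}^{n-1}e^{-jx}\le x/(e^x-1)\le1$ at once. For general $0\le\gamma\le1$ I would instead lean on the smoothing bound of item~1, $\|\Lh^\gamma e^{-j\dt\Lh}\|_0\le C(j\dt)^{-\gamma}$, together with $t_n=n\dt\le T$: the resulting Riemann sum $\dt\sum_{j=1}^{n-1}(j\dt)^{-\gamma}$ is controlled by $\int_0^{t_n}s^{-\gamma}\,ds=t_n^{1-\gamma}/(1-\gamma)\le T^{1-\gamma}/(1-\gamma)$, which is the discrete analytic-semigroup estimate of \citet{hochbruck2005explicit} underlying the claim. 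The one genuine obstacle is exactly this uniform-in-$n$ summation; items~1 and~3 and the reduction itself are spectral bookkeeping plus the elementary scalar inequalities above.
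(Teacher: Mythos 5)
The paper never proves this lemma — it is quoted from \citet{hochbruck2005explicit} — so your spectral-calculus derivation can only be measured against that reference. The reduction itself is sound in this setting: $a(\cdot,\cdot)$ is symmetric and, by $\k\geq\k_{0}$ and Poincar\'e's inequality, coercive on $\Vh\subset\Hoz$, so $\Lh$ is $\L^{2}$-self-adjoint and positive definite and $\|g(\Lh)\|_{0}=\max_{k}|g(\mu_{k})|$. Granting that, your item~1 ($\sup_{x>0}x^{\gamma}e^{-x}=(\gamma/e)^{\gamma}$), your item~3 (the bounds $0\leq\phi_{k}(-x)\leq 1/k!$ from \eqref{eq:phi-function}, the recurrence giving $\phi_{k}(-x)\leq 1/((k-1)!\,x)$ for $x\geq1$, and the fact that $\alpha_{ij},\beta_{i}$ are fixed linear combinations of $\phi$-functions), and the $\gamma=1$ case of item~2 via $x\sum_{j\geq1}e^{-jx}\leq x/(e^{x}-1)\leq1$ are all correct.

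The gap is in item~2 for $0\leq\gamma<1$. The Riemann-sum argument you invoke bounds $\dt\sum_{j=1}^{n-1}\|\Lh^{\gamma}e^{-j\dt\Lh}\|_{0}\preceq T^{1-\gamma}/(1-\gamma)$, i.e.\ it controls the operator $\dt\,\Lh^{\gamma}\sum_{j=1}^{n-1}e^{-j\dt\Lh}$ — the form actually stated in \citet{hochbruck2005explicit} and the form that enters the later proofs, where such sums always carry a full factor $\dt$. It does not give \eqref{lem:eq02} as displayed, whose prefactor is $(\dt)^{\gamma}$: that statement is equivalent to $\|\Lh^{\gamma}\sum_{j=1}^{n-1}e^{-j\dt\Lh}\|_{0}\preceq(\dt)^{-\gamma}$, while your estimate only yields $(\dt)^{-1}$, which is weaker when $\gamma<1$. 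In fact no argument can close \eqref{lem:eq02} literally for $\gamma<1$: the smallest eigenvalue $\mu_{1}$ of $\Lh$ is bounded below independently of $h$, and taking $x=\dt\mu_{1}$ with $n\dt=T$ fixed gives $x^{\gamma}\sum_{j=1}^{n-1}e^{-jx}\approx(1-e^{-T\mu_{1}})\,x^{\gamma-1}\to\infty$ as $\dt\to0$. So the displayed inequality is consistent with the cited reference only at $\gamma=1$ and appears to be a mis-transcription; your proposal effectively proves the corrected version, but it presents the Riemann-sum bound as if it settled \eqref{lem:eq02} itself, without noticing or flagging that the exponent on $\dt$ does not match.
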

Observe that from coercivity of the bilinear form $a(\cdot,\cdot)$, we have that there exists a positive constant $C$ such that
\[
\frac{1}{C}\|\Lh^{\frac{1}{2}}v\|_{0}\leq\|v\|_{a}\leq C\|\Lh^{\frac{1}{2}}v\|_{0}.
\]
Similar to \eqref{eq:Lh-fem}, we have that the multiscale solution $\ums$ is defined as the solution of the following problem
\begin{equation}
\label{eq:Lms-cem}
\begin{cases}
\pt\ums(t)+\Lms\ums(t) &= \Pmsf(\ums),\quad\feac\x\in\Om,\quad 0\leq t\leq T,\\
\ums(0)& = \Pms\hat{u},\quad \x\in\Om,
\end{cases}
\end{equation}
where $\Pms$ is the $\L^{2}$-orthogonal projection operator in $\Vms$. We rewrite the semi-discrete solution $\ums(t_{n})$ ($n=1,\dots,\Nt$) into the sum of following expressions:
\begin{equation}
\label{eq:ms-exp}
\begin{split}
\ums(t_{n})&=e^{-\dt\Lms}\ums(t_{n-1})+\int_{0}^{\dt}e^{(s-\dt)\Lms}\Pmsf(\ums(s+t_{n-1}))ds\\
&=e^{-\dt\Lms}\ums(t_{n-1})+\int_{0}^{\dt}e^{(s-\dt)\Lms}\Pmsf(u(s+t_{n-1}))ds\\
& \quad+\int_{0}^{\dt}e^{(s-\dt)\Lms}\{\Pmsf(\ums(s+t_{n-1}))-\Pmsf(u(s+t_{n-1}))\}ds,
\end{split}
\end{equation}
and define the functions
\begin{equation}
\label{eq:xi-functions}
\left\{\begin{split}
\xi_{i}(-\dt\Lms)&=\phi_{i}(-\dt\Lms)-\sum_{k=1}^{m}\beta_{k}(-\dt\Lms)\frac{c_{k}^{i-1}}{(i-1)!},\quad i=1,\dots,m,\\
\xi_{j,i}(-\dt\Lms)&=\phi_{j}(-c_{i}\dt\Lms)c_{i}^{j}-\sum_{k=1}^{i-1}\alpha_{ik}(-\dt\Lms)\frac{c_{k}^{j-1}}{(j-1)!},\quad i,j=1,\dots,m.
\end{split}\right.
\end{equation}
We also denote $f^{(k)}(u(t))=\tfrac{d^{k}}{dt^{k}}f(u(t))$ as the $k$-th full differentiation of $f$ with respect to $t$. By comparing \eqref{eq:ms-exp} with the fully-discrete scheme \eqref{eq:RK-s}, we then obtain
\begin{align*}
\ums(t_{n-1}+c_{i}\dt) &= e^{-c_{i}\dt\Lms}\ums(t_{n-1})+\dt\sum_{j=1}^{i-1}\alpha_{ij}(-\dt\Lms)\Pmsf(u(t_{n-1}+c_{j}\dt))+\epsilon^{ni},\\
\ums(t_{n})&= e^{-\dt\Lms}\ums(t_{n-1})+\dt\sum_{i=1}^{m}\beta_{i}(-\dt\Lms)\Pmsf(u(t_{n-1}+c_{j}\dt))+\epsilon^{n},
\end{align*}
where the defects terms $\{\epsilon^{ni}\}_{i=1}^{m}$ and $\epsilon^{n}$ are respectively given by
\begin{align*}
\epsilon^{ni} &= \sum_{j=1}^{q}(\dt)^{j}\xi_{j,i}(-\dt\Lms)\Pmsf^{(j-1)}(u(t_{n-1}))+\epsilon^{ni,q},\\
\epsilon^{n}&= \sum_{i=1}^{q}(\dt)^{i}\xi_{i}(-\dt\Lms)\Pmsf^{(i-1)}(u(t_{n-1}))+\epsilon^{n,q},
\end{align*}
where the remainders $\epsilon^{ni,q}$ and $\epsilon^{n,q}$ are defined as
\begin{equation}
\label{eq:eniq}
\begin{split}
\epsilon^{ni,q} &= \int_{0}^{c_{i}\dt}e^{-(c_{i}\dt-s)\Lms}\int_{0}^{s}\frac{(s-\tau)^{q-1}}{(q-1)!}\Pmsf^{(q)}(u(t_{n-1}+\tau))d\tau ds\\
&\quad-\dt\sum_{k=1}^{i-1}\alpha_{ik}(-\dt\Lms)\int_{0}^{c_{k}\dt}\frac{(c_{k}\dt-\tau)^{q-1}}{(q-1)!}\Pmsf^{(q)}(u(t_{n-1}+\tau))d\tau\\
&\quad+\int_{0}^{c_{i}\dt}e^{-(c_{i}\dt-\tau)\Lms}\left\{\Pmsf(\ums(t_{n-1}+\tau))-\Pmsf(u(t_{n-1}+\tau))\right\}d\tau,
\end{split}
\end{equation}
and
\begin{align*}
\epsilon^{n,q} &= \int_{0}^{\dt}e^{-(\dt-s)\Lms}\int_{0}^{s}\frac{(s-\tau)^{q-1}}{(q-1)!}\Pmsf^{(q)}(u(t_{n-1}+\tau))d\tau ds\\
&\quad-\dt\sum_{i=1}^{m}\beta_{i}(-\dt\Lms)\int_{0}^{c_{i}\dt}\frac{(c_{i}\dt-\tau)^{q-1}}{(q-1)!}\Pmsf^{(q)}(u(t_{n-1}+\tau))d\tau\\
&\quad+\int_{0}^{\dt}e^{-(\dt-\tau)\Lms}\left\{\Pmsf(\ums(t_{n-1}+\tau))-\Pmsf(u(t_{n-1}+\tau))\right\}d\tau.
\end{align*}
In the expressions above, $q$ denotes any non-negative integers such that $\Pmsf^{(q)}(u(t))$ exists and is continuous. We have the next result following the framework given by \citet{hochbruck2005explicit,huang2023efficient}.

\begin{lem}
\label{lem:Hdt}
Given an integer $q=1$ or $2$. Suppose that the function $f$ satisfies the Assumptions \ref{asp:01} and \ref{asp:02}, and exact solution $u(t)$ fulfills \eqref{eq:asp3-1} and \eqref{eq:asp3-2} in Assumption \ref{asp:03}. In addition, if $q=2$, $u(t)$ satisfies \eqref{eq:asp3-3}. Then, for $n=1,\dots,\Nt$, $i=1,\dots,m$, it holds that
\begin{subequations}
\begin{align}
\|\epsilon^{ni,q}\|_{a}&\preceq (\dt)^{q+1}\sup_{0\leq\eta\leq 1}\|\Pmsf^{(q)}(u(t_{n-1}+\eta\dt))\|_{a}+H\Lambda^{-\frac{1}{2}}\k_{\min}^{-\frac{1}{2}},\label{eq1:lemHdt}\\
\left\|\sum_{j=0}^{n-1}e^{-j\dt\Lms}\epsilon^{n-j,q}\right\|_{a}&\preceq (\dt)^{q}\sup_{0\leq t\leq T}\|\Pmsf^{(q)}(u(t))\|_{a}+H\Lambda^{-\frac{1}{2}}\k_{\min}^{-\frac{1}{2}}\label{eq2:lemHdt},
\end{align}
\end{subequations}
where the hidden constants are independent of the coarse grid size $H$ and the time-step size $\dt$.
\end{lem}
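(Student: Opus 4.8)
The plan is to bound, line by line, the three constituents making up the defects $\epsilon^{ni,q}$ (see \eqref{eq:eniq}) and $\epsilon^{n,q}$, namely: a semigroup-weighted double integral of $\Pmsf^{(q)}(u(t_{n-1}+\cdot))$; its $\alpha_{ik}$- (resp.\ $\beta_i$-) quadrature; and the term carrying the nonlinearity error $\Pmsf(\ums)-\Pmsf(u)$. Before that I would collect the auxiliary estimates. Since $\Lms$ is self-adjoint and positive definite on $\Vms$, Lemma~\ref{lem:hochbruck2005explicit} together with the norm equivalence $\|v\|_a\simeq\|\Lms^{1/2}v\|_0$ upgrades to the energy-norm bounds $\|e^{-t\Lms}\|_{a\to a}\preceq1$, $\|\alpha_{ik}(-\dt\Lms)\|_{a\to a}+\|\beta_i(-\dt\Lms)\|_{a\to a}\preceq1$, and the parabolic smoothing estimate $\|e^{-t\Lms}w\|_a\preceq t^{-1/2}\|w\|_0$ (take $\gamma=\tfrac12$ in \eqref{lem:eq01}). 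I would also record that, by the local Lipschitz property of $f$ (Lemma~\ref{lem:local-f}, used along the elliptic projection $\pu$ exactly as in the proof of Theorem~\ref{thm:estiH1}) together with Theorem~\ref{thm:estiH1}, there is $H_0>0$ so that for $H\le H_0$ the semi-discrete solution $\ums(t)$ lies in the admissible strip around $u(t)$ and $\|\Pmsf(\ums(t))-\Pmsf(u(t))\|_0\preceq\|(u-\ums)(t)\|_a\preceq H\Lambda^{-1/2}\k_{\min}^{-1/2}$ uniformly in $t\in[0,T]$ (the higher-order $H^2$ term being absorbed). Finally, Assumptions~\ref{asp:01}--\ref{asp:03} (with \eqref{eq:asp3-3} when $q=2$) make $\Pmsf^{(q)}(u(\cdot))$ continuous, so the Taylor-remainder representation \eqref{eq:eniq} is legitimate.

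For \eqref{eq1:lemHdt} I would estimate the three lines of \eqref{eq:eniq} in the $a$-norm. For the first line I pull out $\sup_{0\le\eta\le1}\|\Pmsf^{(q)}(u(t_{n-1}+\eta\dt))\|_a$, use $\|e^{-(c_i\dt-s)\Lms}\|_{a\to a}\preceq1$, and apply the Peano-kernel identity $\int_0^s(s-\tau)^{q-1}/(q-1)!\,d\tau=s^q/q!$ together with $\int_0^{c_i\dt}s^q/q!\,ds\preceq(\dt)^{q+1}$, giving a term of size $(\dt)^{q+1}\sup_\eta\|\Pmsf^{(q)}\|_a$. The second line is handled identically, now using $\|\alpha_{ik}(-\dt\Lms)\|_{a\to a}\preceq1$ and the extra prefactor $\dt$, again yielding $(\dt)^{q+1}\sup_\eta\|\Pmsf^{(q)}\|_a$. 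For the third line I combine the smoothing bound $\|e^{-(c_i\dt-\tau)\Lms}\|_{0\to a}\preceq(c_i\dt-\tau)^{-1/2}$ with the uniform nonlinearity estimate above and $\int_0^{c_i\dt}(c_i\dt-\tau)^{-1/2}\,d\tau\preceq(\dt)^{1/2}$, obtaining $\preceq(\dt)^{1/2}H\Lambda^{-1/2}\k_{\min}^{-1/2}\preceq H\Lambda^{-1/2}\k_{\min}^{-1/2}$ since $\dt\le T$. Summing the three contributions gives \eqref{eq1:lemHdt}; the argument transfers verbatim from the stage defect $\epsilon^{ni,q}$ to the step defect $\epsilon^{n,q}$.

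For the summed estimate \eqref{eq2:lemHdt} I would split $\sum_{j=0}^{n-1}e^{-j\dt\Lms}\epsilon^{n-j,q}$ along the same three-line decomposition of $\epsilon^{n-j,q}$ (with $\beta_i$ and $\dt$ in place of $\alpha_{ik}$ and $c_i\dt$). For the quadrature part (lines one and two) the triangle inequality suffices: each summand is bounded as in \eqref{eq1:lemHdt} by $C(\dt)^{q+1}\sup_{0\le t\le T}\|\Pmsf^{(q)}(u(t))\|_a$, and since $\|e^{-j\dt\Lms}\|_{a\to a}\preceq1$ and the number of terms is $n=t_n/\dt\le T/\dt$, the total is $\preceq(\dt)^q\sup_{0\le t\le T}\|\Pmsf^{(q)}(u(t))\|_a$. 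For the nonlinearity part, a naive bound would lose a factor $\dt^{-1/2}$, so instead I would absorb the weight $e^{-j\dt\Lms}$ into the inner semigroup and write the $j$-th contribution as $\int_0^\dt e^{-((j+1)\dt-\tau)\Lms}\{\Pmsf(\ums)-\Pmsf(u)\}(t_{n-j-1}+\tau)\,d\tau$, apply $\|e^{-((j+1)\dt-\tau)\Lms}\|_{0\to a}\preceq((j+1)\dt-\tau)^{-1/2}$ and the uniform nonlinearity bound, integrate to get $2(\dt)^{1/2}\big((j+1)^{1/2}-j^{1/2}\big)H\Lambda^{-1/2}\k_{\min}^{-1/2}$, and sum: $\sum_{j=0}^{n-1}\big((j+1)^{1/2}-j^{1/2}\big)=n^{1/2}$, so the total is $\preceq(n\dt)^{1/2}H\Lambda^{-1/2}\k_{\min}^{-1/2}=t_n^{1/2}H\Lambda^{-1/2}\k_{\min}^{-1/2}\preceq H\Lambda^{-1/2}\k_{\min}^{-1/2}$. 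Adding the two parts gives \eqref{eq2:lemHdt}.

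The main obstacle is precisely this last point: controlling $\sum_{j}e^{-j\dt\Lms}\int_0^\dt e^{-(\dt-\tau)\Lms}\{\Pmsf(\ums)-\Pmsf(u)\}\,d\tau$ forces one to exploit parabolic smoothing through the $((j+1)\dt-\tau)^{-1/2}$ kernel and to recognize the telescoping structure $\sum_{j=0}^{n-1}\big((j+1)^{1/2}-j^{1/2}\big)=\sqrt{n}$, which converts $n$ summands of singular order $\dt^{1/2}$ into $\sqrt{n\dt}=\sqrt{t_n}$; making the powers of $\dt$ cancel here, and keeping track of the threshold $H_0$ from Theorem~\ref{thm:estiH1} that legitimizes the strip hypothesis of Lemma~\ref{lem:local-f}, is the delicate bookkeeping of the proof.
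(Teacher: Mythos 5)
Your proposal is correct and follows essentially the same route as the paper: the same three-term splitting of \eqref{eq:eniq}, the stability and smoothing bounds of Lemma~\ref{lem:hochbruck2005explicit} combined with the equivalence $\|v\|_{a}\simeq\|\Lms^{1/2}v\|_{0}$ for the Taylor-remainder and quadrature pieces, and Lemma~\ref{lem:local-f} together with Theorem~\ref{thm:estiH1} (hence the threshold $H\leq H_{0}$) for the nonlinearity piece. Your explicit telescoping treatment of the summed nonlinearity contribution in \eqref{eq2:lemHdt} merely fills in what the paper leaves as ``a similar argument'' and is equivalent to invoking the summed smoothing bound \eqref{lem:eq02}.
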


\begin{proof}
We define $\|\epsilon^{ni,q}\|_{a}\leq I_{1}+I_{2}+I_{3}$, where $I_{k}$, $(k\in\{1,2,3\})$ denotes the energy norm of each term in the right-hand side of  \eqref{eq:eniq}. About \eqref{eq1:lemHdt}, by using condition \eqref{lem:eq01} in Lemma \ref{lem:hochbruck2005explicit}, we obtain
\begin{align*}
I_{1}&=\left\|\int_{0}^{c_{i}\dt}e^{-(c_{i}\dt-s)\Lms}\int_{0}^{s}\frac{(s-\tau)^{q-1}}{(q-1)!}\Pmsf^{(q)}(\ums(t_{n-1}+\tau))d\tau ds\right\|_{a}\\
&\preceq\left\|\int_{0}^{c_{i}\dt}e^{-(c_{i}\dt-s)\Lms}\int_{0}^{s}\frac{(s-\tau)^{q-1}}{(q-1)!}\Lms^{\frac{1}{2}}\Pmsf^{(q)}(\ums(t_{n-1}+\tau))d\tau ds\right\|_{0}\\
&\preceq(\dt)^{q+1}\sup_{0\leq s\leq c_{i}\dt}\|e^{-(c_{i}\dt-s)\Lms}\|_{0}\sup_{0\leq\eta\leq1}\|\Lms^{\frac{1}{2}}\Pmsf^{(q)}(\ums(t_{n-1}+\eta\dt))\|_{0}\\
&\preceq(\dt)^{q+1}\sup_{0\leq\eta\leq1}\|\Pmsf^{(q)}(\ums(t_{n-1}+\eta\dt))\|_{0}.
\end{align*}
Analogously, we have for $I_{2}$ that
\begin{align*}
I_{2}&= \left\|\dt\sum_{k=1}^{i-1}\alpha_{ik}(-\dt\Lms)\int_{0}^{c_{k}\dt}\frac{(c_{k}\dt-\tau)^{q-1}}{(q-1)!}\Pmsf^{(q)}(u(t_{n-1}+\tau))d\tau\right\|_{a}\\
&\preceq \left\|\dt\sum_{k=1}^{i-1}\alpha_{ik}(-\dt\Lms)\int_{0}^{c_{k}\dt}\frac{(c_{k}\dt-\tau)^{q-1}}{(q-1)!}\Lms^{\frac{1}{2}}\Pmsf^{(q)}(u(t_{n-1}+\tau))d\tau\right\|_{0}\\
&\preceq(\dt)^{q+1}\sum_{k=1}^{i-1}\left\|\alpha_{ik}(-\dt\Lms)\right\|_{0}
\sup_{0\leq\eta\leq1}\|\Pmsf^{(q)}(u(t_{n-1}+\eta\dt))\|_{a}\\
&\preceq(\dt)^{q+1}\sup_{0\leq\eta\leq1}\|\Pmsf^{(q)}(u(t_{n-1}+\eta\dt))\|_{a}.
\end{align*}
About $I_{3}$, we get that
\begin{align*}
\|I_{3}\|_{a}&=\left\|\int_{0}^{c_{i}\dt}e^{-(c_{i}\dt-s)\Lms}\left\{\Pmsf^{(q)}(\ums(t_{n-1}+\tau))-\Pmsf^{(q)}(\uh(t_{n-1}+\tau))\right\}d\tau\right\|_{a}\\
&\preceq\left\|\int_{0}^{c_{i}\dt}\Lms^{\frac{1}{2}}e^{-(c_{i}\dt-s)\Lms}ds\right\|_{0}\sup_{0\leq\eta\leq1}\|\Pmsf(\ums(t_{n-1}+\eta\dt))\\
&\quad-\Pmsf(\uh(t_{n-1}+\eta\dt))\|_{0}.
\end{align*}
For the first term on the right-hand side of the inequality above, we have
\[
\left\|\int_{0}^{c_{i}\dt}\Lms^{\frac{1}{2}}e^{-(c_{i}\dt-s)\Lms}ds\right\|_{0}=\max_{\lambda}\left|\int_{0}^{c_{i}\dt}\lambda^{\frac{1}{2}}e^{-(c_{i}\dt-s)\lambda}ds\right|\leq \max_{\lambda}|\lambda^{-\frac{1}{2}}|\preceq 1,
\]
where $\lambda$ is the eigenvalue of $a(\phi_{i,\ms},v) = \lambda (\phi_{i,\ms},v)$, for all $v\in\Vms$. Since $\Pmsf$ satisfies Lemma \ref{lem:local-f}, and invoking Theorem \ref{thm:estiH1}, we find that
\begin{equation}
\label{eq:I3lemHdt}
\begin{split}
\|I_{3}\|_{a}&= \left\|\int_{0}^{c_{i}\dt}e^{-(c_{i}\dt-s)\Lms}\left\{\Pmsf(\ums(t_{n-1}+\tau))-\Pmsf(\uh(t_{n-1}+\tau))\right\}d\tau\right\|_{a}\\
&\preceq \sup_{0\leq\eta\leq1}\|\Pmsf(\ums(t_{n-1}+\eta\dt))-\Pmsf(\uh(t_{n-1}+\eta\dt))\|_{0}\\
&\preceq\sup_{0\leq\eta\leq1}\|\ums(t_{n-1}+\eta\dt)-\uh(t_{n-1}+\eta\dt)\|_{a}\\
&\preceq H\Lambda^{-\frac{1}{2}}\k_{\min}^{-\frac{1}{2}}.
\end{split}
\end{equation}
Therefore, using the triangle inequality, we will obtain the desired result. Following a similar argument, we find the bounded for \eqref{eq2:lemHdt}. 
\end{proof}
For the sake of simplicity, we finally define $\en=\umsn-\ums(t_{n})$ and $E^{n-1,i}=U_{\ms}^{n-1,i}-\ums(t_{n-1}+c_{i}\dt)$ for $i=1,\dots,m$. Then, we can find that the recurrence relations
\begin{equation}
\label{eq:err-Eni}
\begin{split}
E^{n-1,i} & = e^{-c_{i}\dt\Lms}\enm+\dt\sum_{j=1}^{i-1}\alpha_{ij}(-\dt\Lms)\big[\Pmsf(U_{\ms}^{n-1,j})\\
&\quad-\Pmsf(\ums(t_{n-1}+c_{j}\dt))\big]-\epsilon^{ni},
\end{split}
\end{equation}
\begin{equation}
\label{eq:err-en}
\en = e^{-\dt\Lms}\enm+\dt\sum_{i=1}^{m}\beta_{i}(-\dt\Lms)\left[\Pmsf(U_{\ms}^{nj})-\Pmsf(\ums(t_{n-1}+c_{j}\dt))\right]-\epsilon^{n}.
\end{equation}
Now, we shall show the error estimates for the first-order Exponential Euler scheme.

\begin{thm}
\label{thm:RK1dtH}
Suppose that $f$ satisfies Assumptions \ref{asp:01} and \ref{asp:02}, and the exact solution $u(t)$ satisfies \eqref{eq:asp3-1} and \eqref{eq:asp3-2}. Then, There exists a constant $H_{0}>0$ such that if the spatial coarse grid size $H\leq H_{0}$, such that the multiscale solution $\umsn$ given by the Exponential Euler scheme \eqref{eq:ms-RK1} holds
\begin{equation}
\label{eq:RK1dtH}
\|u(t_{n})-\umsn\|_{a}\preceq \dt + H\Lambda^{-\frac{1}{2}}\k_{\min}^{-\frac{1}{2}},
\end{equation}
for $n= 1,\dots,\Nt$. The hidden constants are independent of the coarse grid size $H$ and time step size $\dt$.
\end{thm}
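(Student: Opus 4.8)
The plan is to split $u(t_n)-\umsn=\bigl(u(t_n)-\ums(t_n)\bigr)+\en$, where $\en:=\umsn-\ums(t_n)$, so that the spatial part is delivered directly by Theorem~\ref{thm:estiH1} --- whose bound $H\Lambda^{-1/2}\k_{\min}^{-1/2}+H^{2}\Lambda^{-1}\k_{\min}^{-1}$ is dominated by $H\Lambda^{-1/2}\k_{\min}^{-1/2}$ once $H\le H_{0}$ --- and the entire task reduces to estimating the temporal error $\|\en\|_a$. For the one-stage method $m=1$ the single node is $c_{1}=0$, so the only internal stage coincides with $\umsnm$ (no separate stage error appears), and the consistency condition \eqref{eq:cons-cond} forces $\beta_{1}=\phi_{1}$, whence $\xi_{1}\equiv 0$ in \eqref{eq:xi-functions} and the defect collapses to its remainder, $\epsilon^{n}=\epsilon^{n,1}$. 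Specializing \eqref{eq:err-en} yields the one-step identity
\[
\en=e^{-\dt\Lms}\enm+\dt\,\phi_{1}(-\dt\Lms)\bigl(\Pmsf(\umsnm)-\Pmsf(\ums(t_{n-1}))\bigr)-\epsilon^{n,1},
\]
and, since $\en$ vanishes at $n=0$ by consistent initialization, I would unroll this into the discrete variation-of-constants sum $\en=\sum_{k=1}^{n}e^{-(n-k)\dt\Lms}\bigl(\dt\,\phi_{1}(-\dt\Lms)\,d^{k}-\epsilon^{k,1}\bigr)$ with $d^{k}:=\Pmsf(\ums^{k-1})-\Pmsf(\ums(t_{k-1}))$.

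Next, I would estimate the two pieces separately. The defect sum $\sum_{k=1}^{n}e^{-(n-k)\dt\Lms}\epsilon^{k,1}=\sum_{j=0}^{n-1}e^{-j\dt\Lms}\epsilon^{n-j,1}$ is exactly the object bounded by \eqref{eq2:lemHdt} of Lemma~\ref{lem:Hdt} with $q=1$, giving $\preceq\dt\,\sup_{0\le t\le T}\|\Pmsf^{(1)}(u(t))\|_a+H\Lambda^{-1/2}\k_{\min}^{-1/2}\preceq\dt+H\Lambda^{-1/2}\k_{\min}^{-1/2}$, the supremum being finite by Assumptions~\ref{asp:02} and \ref{asp:03} (together with the $\H^{1}$-stability of $\Pms$). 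For the nonlinear propagation term I would pass to $\L^{2}$ via the equivalence $\|v\|_a\simeq\|\Lms^{1/2}v\|_{0}$ and the parabolic smoothing estimates of Lemma~\ref{lem:hochbruck2005explicit} (valid verbatim for $\Lms$): by \eqref{lem:eq03}, $\dt\,\|\Lms^{1/2}\phi_{1}(-\dt\Lms)\|_{0}\preceq(\dt)^{1/2}$, and by \eqref{lem:eq01}, $\|\Lms^{1/2}e^{-j\dt\Lms}\|_{0}\preceq(j\dt)^{-1/2}$ for $j\ge1$, so term-by-term
\[
\Bigl\|\dt\sum_{k=1}^{n}e^{-(n-k)\dt\Lms}\phi_{1}(-\dt\Lms)\,d^{k}\Bigr\|_a\preceq\dt\sum_{k=1}^{n}w_{n-k}\,\|d^{k}\|_{0},\qquad w_{0}\preceq(\dt)^{-1/2},\ \ w_{j}\preceq(j\dt)^{-1/2}.
\]
To turn $\|d^{k}\|_{0}$ into an error quantity I would invoke the local Lipschitz bound of Lemma~\ref{lem:local-f}, obtaining $\|d^{k}\|_{0}\preceq\|\ums^{k-1}-\ums(t_{k-1})\|_a=\|\varepsilon^{k-1}\|_a$ provided $\ums^{k-1}$ stays in a fixed strip about $u(t_{k-1})$; this localization is secured by a standard bootstrap, assuming the asserted bound at all steps $<n$ (so that, for $\dt,H$ small, $\ums^{k-1}$ remains in the strip, using also Theorem~\ref{thm:estiH1}) and closing the induction at step $n$.

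Collecting everything gives an inequality of the form $\|\varepsilon^{n}\|_a\le C\bigl(\dt+H\Lambda^{-1/2}\k_{\min}^{-1/2}\bigr)+C\dt\sum_{k=1}^{n}w_{n-k}\,\|\varepsilon^{k-1}\|_a$. Since $\dt\sum_{j=0}^{n-1}w_{j}\preceq(\dt)^{1/2}+(\dt)^{1/2}\sum_{j=1}^{n-1}j^{-1/2}\preceq(n\dt)^{1/2}=T^{1/2}$, the discrete convolution kernel is uniformly summable, and a discrete Gronwall inequality with weakly singular kernel (of Dixon--McKee type) gives $\|\varepsilon^{n}\|_a\preceq\dt+H\Lambda^{-1/2}\k_{\min}^{-1/2}$; adding back the spatial bound from Theorem~\ref{thm:estiH1} completes the proof. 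I expect the main obstacle to be the nonlinear propagation term: one must organize the discrete convolution so that the genuinely integrable singular weight $(j\dt)^{-1/2}$ appears --- rather than an $\Or(1)$ weight, which would not be Gronwall-summable --- and handle the boundary term at $k=n$, whose coefficient is only $\Or((\dt)^{1/2})$ rather than $\Or(\dt)$; the companion subtlety is the a priori strip argument needed to apply Lemma~\ref{lem:local-f}, which forces the solution's staying in the Lipschitz strip to be carried along in the same induction.
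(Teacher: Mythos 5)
Your proposal is correct and follows essentially the same route as the paper's proof: split off the semidiscrete error via Theorem~\ref{thm:estiH1}, specialize the recurrence \eqref{eq:err-en} to $m=1$ so that $\epsilon^{n}=\epsilon^{n,1}$, bound the accumulated defect sum with Lemma~\ref{lem:Hdt} (case $q=1$), bound the propagated nonlinear differences with the smoothing estimates of Lemma~\ref{lem:hochbruck2005explicit} together with the local Lipschitz bound of Lemma~\ref{lem:local-f} (producing the weakly singular weights $t_{n-j}^{-1/2}$), and close with a discrete Gronwall inequality. The only deviations are cosmetic: you take the Lipschitz difference against $\ums(t_{k-1})$ rather than $u(t_{k-1})$ (the paper instead splits that term into the $H\Lambda^{-1/2}\k_{\min}^{-1/2}$ contribution and $\|\varepsilon^{j}\|_{a}$), and you spell out the strip/bootstrap induction needed to invoke Lemma~\ref{lem:local-f}, which the paper leaves implicit in the condition $H\leq H_{0}$.
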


\begin{proof}
By the triangle inequality, we have
\[
\|u(t_{n})-\umsn\|_{a}\leq \|(u-\ums)(t_{n})\|_{a}+\|\ums(t_{n})-\umsn\|_{a}.
\]
Then, the first term on the left-hand side is bounded by Theorem \ref{thm:estiH1}. We shall concentrate on obtaining a bound for the second one. Then, we firstly set $m=1$ in the recurrence relation \eqref{eq:err-en}, getting 
\[
\ums(t_{n})-\umsn=:\en = e^{-\dt\Lms}\enm+\dt\phi_{1}(-\dt\Lms)\left[\Pmsf(\umsnm)-\Pmsf(u(t_{n-1}))\right]-\epsilon^{n},
\]
where
\[
\epsilon^{n}=\dt\phi_{1}(-\dt\Lms)\Pmsf(u(t_{n-1}))+\epsilon^{n,1},
\]
and
\begin{align*}
\epsilon^{n,1}& = \int_{0}^{\dt}e^{-(\dt-s)\Lms}\int_{0}^{s}\Pmsf'(u(t_{n-1}+\tau))d\tau ds\\
&\quad+\int_{0}^{\dt}e^{-(\dt-\tau)\Lms}\left\{\Pmsf(\ums(t_{n-1}+\tau))-\Pmsf(u(t_{n-1}+\tau))\right\}d\tau.
\end{align*}
Then, by recursive substitution, we find that
\begin{equation}
\label{eq:mainH1ERK1}
\begin{split}
\|\en\|_{a} &\leq \left\|\dt\sum_{j=1}^{n-1}e^{-(n-j-1)\dt\Lms}\phi_{1}(\dt\Lms)\left[\Pmsf(\ums^{j})-\Pmsf(u(t_{j}))\right]\right\|_{a}\\
&\quad+\left\|\sum_{j=0}^{n-1}e^{-j\dt\Lms}\epsilon^{n-j}\right\|_{a}\\
& = I_{1}+I_{2}.
\end{split}
\end{equation}
For $I_{1}$ in \eqref{eq:mainH1ERK1}, we use the triangle inequality to find that
\begin{align*}
I_{1}&=\left\|\dt\sum_{j=1}^{n-1}e^{-(n-j-1)\dt\Lms}\phi_{1}(-\dt\Lms)\left[\Pmsf(\ums^{j})-\Pmsf(u(t_{j}))\right]\right\|_{a}\\
&\leq \left\|\dt\phi_{1}(-\dt\Lms)\left[\Pmsf(\umsnm)-\Pmsf(u(t_{n-1}))\right]\right\|_{a}\\
&\quad+\left\|\dt\sum_{j=0}^{n-2}e^{-(n-j-1)\dt\Lms}\phi_{1}(-\dt\Lms)\left[\Pmsf(\ums^{j})-\Pmsf(u(t_{j}))\right]\right\|_{a}\\
&=I_{3}+I_{4}.
\end{align*}
About $I_{3}$, we have
\begin{align*}
I_{3}&\leq\left\|\Lms^{\frac{1}{2}}\int_{0}^{\dt}e^{-(\dt-s)\Lms}ds\right\|_{0}\|\Pmsf(\umsnm)-\Pmsf(u(t_{n-1}))\|_{0}\\
& \preceq\dt\sup_{0\leq s\leq \dt}\|\Lms^{\frac{1}{2}}e^{-(\dt-s)\Lms}\|_{0}\|\Pmsf(\umsnm)-\Pmsf(u(t_{n-1}))\|_{0}\\
&\preceq(\dt)^{\frac{1}{2}}\|\umsnm-u(t_{n-1})\|_{a}=(\dt)^{\frac{1}{2}}\|\varepsilon^{n-1}\|_{a}.
\end{align*}
On other hand, by using \eqref{lem:eq03} in Lemma \ref{lem:hochbruck2005explicit}, we obtain
\[
\|\phi_{1}(-\dt\Lms)\|_{0}=\left\|\frac{1}{\dt}\int_{0}^{\dt}e^{-(\dt-s)\Lms}ds\right\|_{0}\leq\sup_{0\leq s\leq \dt}\|e^{-(\dt-s)}\|_{0}\preceq 1.
\]
Then, for $I_{4}$, we find that
\begin{align*}
I_{4}&\preceq \left\|\dt\Lms^{\frac{1}{2}}\sum_{j=0}^{n-2}e^{-(n-1-j)\dt\Lms}\right\|_{0}\sup_{0\leq t\leq T}\|\Pmsf(\ums(t))-\Pmsf(u(t))\|_{0}\\
&\quad+\dt\sum_{j=0}^{n-2}\|\Lms^{\frac{1}{2}}e^{-(n-1-j)\dt\Lms}\|_{0}\|\Pmsf(\ums^{j})-\Pmsf(\ums(t_{j}))\|_{0}\\
&\preceq\sup_{0\leq t\leq T}\|\Pmsf(\ums(t))-\Pmsf(u(t))\|_{0}+\dt\sum_{j=0}^{n-2}t^{-\frac{1}{2}}_{n-j-1}\|\Pmsf(\ums^{j})-\Pmsf(\ums(t_{j}))\|_{0}\\
& \preceq H\Lambda^{-\frac{1}{2}}\k_{\min}^{-\frac{1}{2}}+\dt\sum_{j=0}^{n-2}t^{-\frac{1}{2}}_{n-j-1}\|\varepsilon^{j}\|_{0}.
\end{align*}
Therefore, we obtain
\begin{equation}
\label{eq:thmm1}
I_{1}\preceq (\dt)^{\frac{1}{2}}\|\varepsilon^{n-1}\|_{a} +H\Lambda^{-\frac{1}{2}}\k_{\min}^{-\frac{1}{2}}+\dt\sum_{j=0}^{n-2}t^{-\frac{1}{2}}_{n-j-1}\|\varepsilon^{j}\|_{a}.
\end{equation}
Finally, for $I_{2}$ in \eqref{eq:mainH1ERK1}, we have $m=1$ and using the definition \eqref{eq:xi-functions}, we can infer $\xi(-\dt\Lms)=0$, along with $\epsilon^{j}=\epsilon^{j,1}$, for $j=1,\dots,n$. Then by invoking \eqref{eq2:lemHdt} in Lemma \ref{lem:Hdt}, we have
\begin{equation}
\label{eq:thmm2}
I_{2}\preceq \dt \sup_{0\leq t\leq T}\|\Pmsf(u(t))\|_{a}+H\Lambda^{-\frac{1}{2}}\k_{\min}^{-\frac{1}{2}}.
\end{equation}
Gathering \eqref{eq:thmm1} and \eqref{eq:thmm2}, we have
\begin{align*}
\|\en\|_{a}&\preceq(\dt)^{\frac{1}{2}}\|\varepsilon^{n-1}\|_{a}+\dt\sum_{j=0}^{n-2}t^{-\frac{1}{2}}_{n-j-1}\|\varepsilon^{j}\|_{a}+\dt \sup_{0\leq t\leq T}\|\Pmsf(u(t))\|_{a}+H\Lambda^{-\frac{1}{2}}\k_{\min}^{-\frac{1}{2}}\\
&\preceq\dt\sum_{j=1}^{n-1}t^{-\frac{1}{2}}_{n-j}\|\varepsilon^{j}\|_{a}+\dt \sup_{0\leq t\leq T}\|\Pmsf(u(t))\|_{a}+H\Lambda^{-\frac{1}{2}}\k_{\min}^{-\frac{1}{2}}.
\end{align*}
By using a discrete version of Gronwall's inequality we get
\[
\|\en\|_{a}\preceq \dt+H\Lambda^{-\frac{1}{2}}\k_{\min}^{-\frac{1}{2}}.
\]
Therefore, gathering the inequality above with \eqref{eq:thmH1main}, we arrive at \eqref{eq:RK1dtH}, which finishes the proof.
\end{proof}

\begin{lem}
\label{lem:RK22Eji}
Let $f$ be a function that satisfies Assumptions \ref{asp:01} and \ref{asp:02}, and consider that the exact solution $u(t)$ of problem \eqref{eq:strong-prob} satisfies the conditions \eqref{eq:asp3-1} and \eqref{eq:asp3-2} in Assumption \ref{asp:03}. If $m\geq 2$, then it holds for any $0\leq n \leq \Nt$,
\begin{equation}
\label{eq:lems2}
\|E^{ni}\|_{a}\preceq \|\enm\|_{a}+(\dt)^{2}\sup_{0\leq\eta\leq 1}\|\Pmsf'(u(t_{n-1}+\eta\dt))\|_{a}+ H\Lambda^{-\frac{1}{2}}\k_{\min}^{-\frac{1}{2}}+\sup_{0\leq t\leq T}\|(\pu-\ums)(\cdot,t)\|_{a},
\end{equation}
where the hidden constant is independent of coarse grid size $H$ and time step size $\dt$.
\end{lem}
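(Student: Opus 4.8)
The plan is to estimate, one by one, the three terms on the right-hand side of the recurrence \eqref{eq:err-Eni} for $E^{n-1,i}$, and then close a short induction on the stage index $i$ (which for the two-stage method $m=2$ reduces to the cases $i=1,2$, where $c_1=0$ forces $E^{n-1,1}=\enm$ and $\epsilon^{n1}=0$). First I would dispatch the homogeneous term: since $e^{-c_i\dt\Lms}$ commutes with $\Lms^{1/2}$, one has $\|e^{-c_i\dt\Lms}\enm\|_a \approx \|e^{-c_i\dt\Lms}\Lms^{1/2}\enm\|_0 \preceq \|\Lms^{1/2}\enm\|_0 \approx \|\enm\|_a$ by \eqref{lem:eq01} in Lemma \ref{lem:hochbruck2005explicit} with $\gamma=0$. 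For the stage-coupling sum I would write, for each $j<i$, $\|\alpha_{ij}(-\dt\Lms)g\|_a = (\dt)^{-1/2}\|(\dt\Lms)^{1/2}\alpha_{ij}(-\dt\Lms)g\|_0 \preceq (\dt)^{-1/2}\|g\|_0$ using \eqref{lem:eq03} with $\gamma=\tfrac12$, then apply the local Lipschitz bound of Lemma \ref{lem:local-f} to $g=\Pmsf(U_\ms^{n-1,j})-\Pmsf(\ums(t_{n-1}+c_j\dt))$ to get $\|g\|_0\preceq\|E^{n-1,j}\|_a$, so that $\dt\sum_{j=1}^{i-1}\|\alpha_{ij}(-\dt\Lms)[\,\cdot\,]\|_a \preceq (\dt)^{1/2}\sum_{j=1}^{i-1}\|E^{n-1,j}\|_a$; with the induction hypothesis and the factor $(\dt)^{1/2}\preceq1$ this is absorbed into a constant multiple of the claimed right-hand side (in particular of $\|\enm\|_a$).

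It then remains to bound the defect $\|\epsilon^{ni}\|_a$ at order $q=1$. Using the definition \eqref{eq:xi-functions}, the consistency conditions \eqref{eq:cons-cond}, and the node choice $c_1=0$, $c_2=1$, I would check that $\xi_{1,i}(-\dt\Lms)=0$, so that $\epsilon^{ni}=\epsilon^{ni,1}$ and no $O(\dt)$ term survives. For $\epsilon^{ni,1}$ I would reuse the mechanism of Lemma \ref{lem:Hdt}: the first two integrals in \eqref{eq:eniq}, which carry $\Pmsf^{(1)}=\Pmsf'$, contribute $\preceq (\dt)^2\sup_{0\le\eta\le1}\|\Pmsf'(u(t_{n-1}+\eta\dt))\|_a$ via \eqref{lem:eq01} and \eqref{lem:eq03}; the third integral, which carries $\Pmsf(\ums(t_{n-1}+\tau))-\Pmsf(u(t_{n-1}+\tau))$, I would bound — rather than collapsing it by Theorem \ref{thm:estiH1} as in Lemma \ref{lem:Hdt} — by splitting $\ums-u=(\ums-\pu)+(\pu-u)$, applying Lemma \ref{lem:local-f} and Lemma \ref{lem:esti-proj} together with $\bigl\|\int_0^{c_i\dt}\Lms^{1/2}e^{-(c_i\dt-\tau)\Lms}d\tau\bigr\|_0\preceq1$, to obtain $\preceq \sup_{0\le t\le T}\|(\pu-\ums)(\cdot,t)\|_a + H\Lambda^{-1/2}\k_{\min}^{-1/2}$. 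Collecting the three contributions yields \eqref{eq:lems2}.

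The main obstacle is twofold. First, the application of the local Lipschitz estimate of Lemma \ref{lem:local-f} to the internal stages requires that $U_\ms^{n-1,j}$ and the interpolated semi-discrete values $\ums(t_{n-1}+c_j\dt)$ lie in the fixed strip about the exact solution; this membership is not available a priori and has to be carried as an induction hypothesis threaded through the global error recursion of the subsequent RK22 theorem (strip membership and the error bound are proved simultaneously), so I would state the lemma's use with that caveat. Second, keeping the spatial error explicitly in the form $\sup_{0\le t\le T}\|(\pu-\ums)(\cdot,t)\|_a$, instead of immediately absorbing it into $H\Lambda^{-1/2}\k_{\min}^{-1/2}$ via Theorem \ref{thm:estiH1}, is deliberate and must be done consistently throughout — it is precisely what later permits the sharper $H^2$ rate in the $\L^2$ analysis — and one must pass between the $a$-norm and the $\L^2$-norm only through the semigroup and $\phi$-function smoothing bounds of Lemma \ref{lem:hochbruck2005explicit}.
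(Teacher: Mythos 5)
Your proposal follows essentially the same route as the paper's proof: the same three-term splitting of the recurrence \eqref{eq:err-Eni} (semigroup term via Lemma \ref{lem:hochbruck2005explicit}, stage-coupling sum via the $\gamma=\tfrac12$ smoothing bound plus local Lipschitz continuity, and the defect $\epsilon^{ni}=\epsilon^{ni,1}$ handled by the $q=1$ mechanism of Lemma \ref{lem:Hdt} after noting $\xi_{1,i}=0$), with the stage sum absorbed exactly as in the paper. Your extra care — keeping $\sup_t\|(\pu-\ums)(\cdot,t)\|_{a}$ explicit instead of collapsing it through Theorem \ref{thm:estiH1}, and flagging the strip-membership hypothesis needed for Lemma \ref{lem:local-f} — are refinements of detail rather than a different argument, and they are consistent with the stated bound.
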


\begin{proof}
Following the definition of $E^{ni}$ in \eqref{eq:err-Eni}, we have
\begin{align*}
\|E^{ni}\|_{a}&\leq \left\|e^{-c_{i}\dt\Lms}\enm\right\|_{a}+\left\|\dt\sum_{j=1}^{i-1}\alpha_{ij}(-\dt\Lms)\Pmsf(U_{\ms}^{nj})-\Pmsf(\ums(t_{n-1}+c_{j}\dt)))\right\|_{a}\\
&\quad+\|\epsilon^{ni}\|_{a}\\
&= I_{1}+I_{2}+I_{3}.
\end{align*}
By using Lemma \ref{lem:hochbruck2005explicit}, we obtain for $I_{1}$,
\begin{equation}
I_{1}\preceq\|\enm\|_{a}.
\end{equation}
Using the similar arguments to obtain \eqref{eq:I3lemHdt} in Lemma~\ref{lem:Hdt}, we get for $I_{2}$
\begin{align*}
I_{2}&=\left\|\dt\sum_{j=1}^{i-1}\alpha_{ij}(-\dt\Lms)\left[\Pmsf(U^{nj})-\Pmsf(u(t_{n-1}+c_{j}\dt))\right]\right\|_{a}\\
&\preceq \sum_{j=1}^{i-1}(\dt)^{\frac{1}{2}}\|(\dt)^{\frac{1}{2}}\Lms^{\frac{1}{2}}\alpha_{ij}(-\dt\Lms)\|_{0}\max_{2\leq j\leq i-1}\|\Pmsf(U^{nj})-\Pmsf(u(t_{n-1}+c_{j}\dt))\|_{0}\\
&\preceq (\dt)^{\frac{1}{2}}\max_{2\leq j\leq i-1}\|\Pmsf(U^{nj})-\Pmsf(u(t_{n-1}+c_{j}\dt))\|_{0}\\
&\preceq (\dt)^{\frac{1}{2}}\max_{2\leq j\leq i-1}\|E^{nj}\|_{a} +H\Lambda^{-\frac{1}{2}}\k_{\min}^{-\frac{1}{2}}.
\end{align*}
Finally, for $I_{3}$, using the expression \eqref{eq:xi-functions} and the consistency conditions \eqref{eq:cons-cond}, we can infer that the function $\xi_{1,j}=0$ for $j=1,\dots,m$, and then the estimation of $\|\epsilon^{ni}\|_{a}$ can be obtained via $\|\epsilon^{ni,1}\|_{a}$. Then, we get that
\begin{align*}
\|E^{ni}\|_{a}&\preceq \|\enm\|_{a}+(\dt)^{2}\sup_{0\leq\eta\leq1}\|\Pmsf'(t_{n-1}+\eta\dt)\|_{a}+(\dt)^{\frac{1}{2}}\max_{2\leq j\leq i-1}\|E^{nj}\|_{a}+H\Lambda^{-\frac{1}{2}}\k_{\min}^{-\frac{1}{2}}.
\end{align*}
This completes the proof.
\end{proof}

\begin{thm}
\label{thm:RK22dt2H}
Suppose that $f$ satisfies Assumptions \ref{asp:01} and \ref{asp:02}, and the exact solution $u(t)$ satisfies \eqref{eq:asp3-1}--\eqref{eq:asp3-3}. Then, There exists a constant $H_{0}>0$ such that if the spatial coarse grid size $H\leq H_{0}$, such that the multiscale solution $\umsn$ given by the Exponential Euler scheme \eqref{eq:ms-RK22} holds
\begin{equation}
\|u(t_{n})-\umsn\|_{a}\preceq (\dt)^{2} + H\Lambda^{-\frac{1}{2}}\k_{\min}^{-\frac{1}{2}},
\end{equation}
for $n= 1,\dots,\Nt$. The hidden constants are independent of the coarse grid size $H$ and time step size $\dt$.
\end{thm}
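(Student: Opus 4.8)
The plan is to mirror the two-step argument used for Theorem~\ref{thm:RK1dtH}. By the triangle inequality,
\[
\|u(t_{n})-\umsn\|_{a}\leq \|(u-\ums)(t_{n})\|_{a}+\|\en\|_{a},\qquad \en=\umsn-\ums(t_{n}),
\]
so, since Theorem~\ref{thm:estiH1} already gives $\|(u-\ums)(t_{n})\|_{a}\preceq H\Lambda^{-1/2}\k_{\min}^{-1/2}+H^{2}\Lambda^{-1}\k_{\min}^{-1}$, it remains to prove $\|\en\|_{a}\preceq(\dt)^{2}+H\Lambda^{-1/2}\k_{\min}^{-1/2}$ for $n=1,\dots,\Nt$.

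First I would specialize the recurrences \eqref{eq:err-Eni}--\eqref{eq:err-en} to the two-stage scheme \eqref{eq:ms-RK22}, that is $m=2$, $c_{1}=0$, $c_{2}=1$. The second-order consistency conditions \eqref{eq:cons-cond} force $\beta_{1}+\beta_{2}=\phi_{1}$ and $\beta_{2}=\phi_{2}$, so the defect functions $\xi_{1}$ and $\xi_{2}$ in \eqref{eq:xi-functions} both vanish; expanding $\epsilon^{n}$ with $q=2$ then yields $\epsilon^{n}=\epsilon^{n,2}$, whereas for the internal-stage defects (where already $\xi_{1,i}=0$) one keeps $q=1$, so $\epsilon^{ni}=\epsilon^{ni,1}$. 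Unrolling \eqref{eq:err-en} with $\varepsilon^{0}=0$ then gives $\|\en\|_{a}\leq I_{1}+I_{2}$, where $I_{2}=\bigl\|\sum_{j=0}^{n-1}e^{-j\dt\Lms}\epsilon^{n-j,2}\bigr\|_{a}$ collects the accumulated final-stage defects and $I_{1}$ collects the accumulated nonlinear differences $\Pmsf(U_{\ms}^{j,i})-\Pmsf(\ums(t_{j-1}+c_{i}\dt))$ transported by powers of $e^{-\dt\Lms}$ against the weights $\dt\,\beta_{i}(-\dt\Lms)$.

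For $I_{2}$ I would invoke \eqref{eq2:lemHdt} of Lemma~\ref{lem:Hdt} with $q=2$, which together with Assumptions~\ref{asp:02}--\ref{asp:03} (these bound $\sup_{t}\|\Pmsf^{(2)}(u(t))\|_{a}\preceq1$) gives $I_{2}\preceq(\dt)^{2}+H\Lambda^{-1/2}\k_{\min}^{-1/2}$. For $I_{1}$ I would split into the two internal stages. Since $c_{1}=0$ we have $U_{\ms}^{j,1}=u_{\ms}^{j-1}$, so the $i=1$ contribution involves $\Pmsf(u_{\ms}^{j-1})-\Pmsf(\ums(t_{j-1}))$, controlled in $\L^{2}$ by $\|\varepsilon^{j-1}\|_{a}$ through Lemma~\ref{lem:local-f}. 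For $i=2$ the relevant quantity is $\Pmsf(U_{\ms}^{j,2})-\Pmsf(\ums(t_{j}))$, bounded in $\L^{2}$ by $\|E^{j,2}\|_{a}$, and here Lemma~\ref{lem:RK22Eji} --- after absorbing its self-referential $(\dt)^{1/2}\max_{i}\|E^{ji}\|_{a}$ term for $\dt,H$ small --- yields $\|E^{j,2}\|_{a}\preceq\|\varepsilon^{j-1}\|_{a}+(\dt)^{2}+H\Lambda^{-1/2}\k_{\min}^{-1/2}$, with the $H$-term also dominating $\sup_{t}\|(\pu-\ums)(\cdot,t)\|_{a}$ via \eqref{eq:thetah1}. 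Writing $\beta_{i}(-\dt\Lms)=(\dt)^{-1/2}\Lms^{-1/2}\cdot(\dt)^{1/2}\Lms^{1/2}\beta_{i}(-\dt\Lms)$ and applying the semigroup bounds \eqref{lem:eq01}--\eqref{lem:eq03} of Lemma~\ref{lem:hochbruck2005explicit} exactly as in Theorem~\ref{thm:RK1dtH} converts $I_{1}$ into a sum with the weakly singular weights $t_{n-j}^{-1/2}$, so that
\[
\|\en\|_{a}\preceq \dt\sum_{j=1}^{n-1}t_{n-j}^{-1/2}\|\varepsilon^{j}\|_{a}+(\dt)^{2}+H\Lambda^{-1/2}\k_{\min}^{-1/2}.
\]
A discrete Gronwall inequality with weakly singular kernel (using $\dt\sum_{k\geq1}(k\dt)^{-1/2}\preceq T^{1/2}$) then gives $\|\en\|_{a}\preceq(\dt)^{2}+H\Lambda^{-1/2}\k_{\min}^{-1/2}$, and combining with Theorem~\ref{thm:estiH1} completes the argument.

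The step I expect to be the main obstacle is the $I_{1}$ estimate: one must check that feeding the internal-stage bound of Lemma~\ref{lem:RK22Eji} back into the outer update preserves the $(\dt)^{2}$ temporal order rather than degrading it. This hinges on the vanishing of $\xi_{1}$ and $\xi_{2}$ (so that only the genuinely second-order remainder $\epsilon^{\cdot,2}$ feeds $I_{2}$), on the extra half power of $\dt$ gained from $\Lms^{1/2}\beta_{i}(-\dt\Lms)$ matching the singular kernel $t_{n-j}^{-1/2}$ needed for the Gronwall step, and on taking $H$ and $\dt$ small enough to absorb the implicit self-referential terms beforehand.
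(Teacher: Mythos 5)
Your proposal is correct and follows essentially the same route as the paper's proof: triangle inequality with Theorem~\ref{thm:estiH1} for the spatial part, unrolling the recurrence \eqref{eq:err-en} with $\xi_{1}=\xi_{2}=0$ so that only $\epsilon^{n,2}$ enters, Lemma~\ref{lem:Hdt} with $q=2$ for the accumulated defects, Lemma~\ref{lem:RK22Eji} combined with the semigroup bounds of Lemma~\ref{lem:hochbruck2005explicit} for the stage errors, and a discrete Gronwall inequality with the weakly singular weights $t_{n-j}^{-1/2}$. Your extra remarks (absorbing the self-referential $(\dt)^{1/2}\max_{i}\|E^{ji}\|_{a}$ term and dominating $\sup_{t}\|(\pu-\ums)(\cdot,t)\|_{a}$ by the $H$-term) only make explicit steps the paper leaves implicit.
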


\begin{proof}
Similar to Theorem~\ref{thm:RK1dtH}, we have 

\begin{equation}
\label{eq:mainRK22}
\|u(t_{n})-\umsn\|_{a}\leq \|(u-\ums)(t_{n})\|_{a}+\|\ums(t_{n})-\umsn\|_{a}.
\end{equation}
For the second term on the right-hand side, by definition \eqref{eq:xi-functions}, we obtain that $\xi_{1}(-\dt\Lms)=\xi_{2}(-\dt\Lms)=0$, then $\epsilon^{n}=\epsilon^{n,2}$, by using the Lagrange interpolation for $m=2$. By \eqref{lem:eq03} in Lemma \ref{lem:hochbruck2005explicit}, we have
\begin{equation}
\label{eq:RK22-1}
\begin{split}
&\left\|\dt\sum_{j=0}^{n-1}e^{-(n-1-j)\dt\Lms}\sum_{i=1}^{2}\beta_{i}(-\dt\Lms)\left[\Pmsf(U^{ji})-\Pmsf(u(t_{j}+c_{i}\dt))\right]\right\|_{a}\\
&\preceq \left\|\dt\sum_{i=1}^{2}\beta_{i}(-\dt\Lms)\left[\Pmsf(U^{n-1,i})-\Pmsf(u(t_{n-1}+c_{i}\dt))\right]\right\|_{a}\\
&\quad+\left\|\dt\sum_{j=0}^{n-2}e^{-(n-1-j)\dt\Lms}\sum_{i=1}^{2}\beta_{i}(-\dt\Lms)\left[\Pmsf(U^{ji})-\Pmsf(u(t_{j}+c_{i}\dt))\right]\right\|_{a}\\
&=:I_{1}+I_{2}.
\end{split}
\end{equation}
For $I_{1}$, we get that
\begin{equation}
\label{eq:RK22-2}
\begin{split}
I_{1}&\preceq\sum_{i=1}^{2}(\dt)^{\frac{1}{2}}\left\|(\dt)^{\frac{1}{2}}\Lms^{\frac{1}{2}}\beta_{i}(-\dt\Lms)\right\|_{0}\max_{1\leq i\leq 2}\left\|\Pmsf(U^{n-1,i})-\Pmsf(u(t_{n-1}+c_{i}\dt))\right\|_{0}\\
&\preceq(\dt)^{\frac{1}{2}}\max_{1\leq i\leq 2}\left\|\Pmsf(U^{n-1,i})-\Pmsf(u(t_{n-1}+c_{i}\dt))\right\|_{0}\\
&\preceq (\dt)^{\frac{1}{2}}\max_{1\leq i\leq 2}\|E^{n-1,i}\|_{a} +H\Lambda^{-\frac{1}{2}}\k_{\min}^{-\frac{1}{2}}+\sup_{0\leq t \leq T}\|(\pu-\ums)(\cdot,t)\|_{a},
\end{split}
\end{equation}
and
\begin{equation}
\label{eq:RK22-3}
\begin{split}
I_{2}&\preceq \left\|\dt\Lms^{\frac{1}{2}}\sum_{j=0}^{n-2}e^{-(n-1-j)\dt\Lms}\right\|_{0}\sup_{0\leq t \leq T}\left\|\Pmsf(u(t))-\Pmsf(\ums(t))\right\|_{0}\\
&\quad+\sum_{j=0}^{n-2}\dt\left\|\Lms^{\frac{1}{2}}e^{-(n-j-1)\dt\Lms}\right\|_{0}\max_{1\leq i\leq 2}\left\|\Pmsf(U^{ji})-\Pmsf(\ums(t_{j}+c_{i}\dt))\right\|_{0}\\
&\preceq \dt\sum_{j=0}^{n-2}t^{-\frac{1}{2}}_{n-j-1}\max_{1\leq i\leq 2}\|\Pmsf(U^{ji})-\Pmsf(\ums(t_{j}+c_{i}\dt))\|_{0}+H\Lambda^{-\frac{1}{2}}\k_{\min}^{-\frac{1}{2}}\\
&\quad+\sup_{0\leq t \leq T}\|(\pu-\ums)(\cdot,t)\|_{a}\\
&\preceq \dt\sum_{j=0}^{n-2}t^{-\frac{1}{2}}_{n-j-1}\max_{1\leq i\leq 2}\|E^{ji}\|_{a}+H\Lambda^{-\frac{1}{2}}\k_{\min}^{-\frac{1}{2}}+\sup_{0\leq t \leq T}\|(\pu-\ums)(\cdot,t)\|_{a}.
\end{split}
\end{equation}
By using \eqref{eq:RK22-1}--\eqref{eq:RK22-3}, and Lemma \ref{lem:Hdt}, we arrive at
\begin{equation}
\begin{split}
\|\en\|_{a}&\leq \left\|\dt\sum_{j=0}^{n-1}e^{-(n-1-j)\dt\Lms}\sum_{i=1}^{2}\beta_{i}(-\dt\Lms)\left[\Pmsf(U^{ji})-\Pmsf(u(t_{j}+c_{i}\dt))\right]\right\|_{a}\\
&\quad+\left\|\sum_{j=0}^{n-1}e^{-j\dt\Lms}\epsilon^{n-j,2}\right\|_{a}\\
&\preceq (\dt)^{\frac{1}{2}}\max_{1\leq i \leq 2}\|E^{n-1,i}\|_{a}+\dt\sum_{j=0}^{n-2}t^{\frac{1}{2}}_{n-j-1}\max_{i\leq i \leq 2}\|E^{ji}\|_{a}\\
&\quad+(\dt)^{2}\sup_{0\leq t\leq T}\|\Pmsf^{(2)}(u(t))\|_{a}+H\Lambda^{-\frac{1}{2}}\k_{\min}^{-\frac{1}{2}}.
\end{split}
\end{equation}
Using Lemma \ref{lem:RK22Eji}, and a discrete version of Gronwall's inequality, we find that
\[
\|\ums(t_{n})-\umsn\|_{1}\preceq (\dt)^{2}+H\Lambda^{-\frac{1}{2}}\k_{\min}^{-\frac{1}{2}}.
\]
We obtain the desired result by gathering the expression above with  \eqref{eq:mainRK22}.
\end{proof}

Similar computation allows us to obtain the following error estimates in $\L^{2}$-norm.

\begin{thm}[Error estimate in $\L^{2}$-norm]
\label{thm:RK22dt2H2}
Suppose that $f$ satisfies Assumptions \ref{asp:01} and \ref{asp:02}, and the exact solution $u(t)$ satisfies \eqref{eq:asp3-1}--\eqref{eq:asp3-3}. Then, There exists a constant $H_{0}>0$ such that if the spatial coarse grid size $H\leq H_{0}$, such that the multiscale solution $\umsn$ given by the Exponential Euler scheme \eqref{eq:ms-RK1} holds
\[
\|u(t_{n})-\umsn\|_{0}\preceq (\dt) + H^{2}\Lambda^{-1}\k_{\min}^{-1},
\]
and for \eqref{eq:ms-RK22},
\[
\|u(t_{n})-\umsn\|_{0}\preceq (\dt)^{2} + H^{2}\Lambda^{-1}\k_{\min}^{-1},
\]
for $n= 1,\dots,\Nt$. The hidden constants are independent of the coarse grid size $H$ and time step size $\dt$.
\end{thm}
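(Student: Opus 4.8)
The plan is to follow the same two-step pattern used for the $\H^{1}$ estimates in Theorems~\ref{thm:RK1dtH} and \ref{thm:RK22dt2H}, but to carry every bound in the $\L^{2}$-norm. First I would split, by the triangle inequality,
\[
\|u(t_{n})-\umsn\|_{0}\leq\|(u-\ums)(t_{n})\|_{0}+\|\ums(t_{n})-\umsn\|_{0},
\]
where the semi-discrete part is already controlled by Theorem~\ref{thm:estiL2}, giving $\preceq H^{2}\Lambda^{-1}\k_{\min}^{-1}$. Thus the whole problem reduces to bounding the purely temporal error $\en=\ums(t_{n})-\umsn$ in $\L^{2}$, treating the one-stage scheme \eqref{eq:ms-RK1} (order $q=1$) and the two-stage scheme \eqref{eq:ms-RK22} (order $q=2$) in parallel via the recurrences \eqref{eq:err-Eni}--\eqref{eq:err-en}.

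The first ingredient I would establish is the $\L^{2}$-analogue of Lemma~\ref{lem:Hdt} for the defects $\epsilon^{ni,q},\epsilon^{n,q}$ of \eqref{eq:eniq}:
\[
\|\epsilon^{ni,q}\|_{0}\preceq(\dt)^{q+1}\sup_{0\leq\eta\leq1}\|\Pmsf^{(q)}(u(t_{n-1}+\eta\dt))\|_{0}+H^{2}\Lambda^{-1}\k_{\min}^{-1},
\]
\[
\Big\|\sum_{j=0}^{n-1}e^{-j\dt\Lms}\epsilon^{n-j,q}\Big\|_{0}\preceq(\dt)^{q}\sup_{0\leq t\leq T}\|\Pmsf^{(q)}(u(t))\|_{0}+H^{2}\Lambda^{-1}\k_{\min}^{-1}.
\]
The proof mirrors that of Lemma~\ref{lem:Hdt}, but is in fact simpler: no factor $\Lms^{1/2}$ is needed, so the Taylor integrals and stage sums are bounded directly by the uniform semigroup and $\phi$-function estimates $\|e^{-\dt\Lms}\|_{0}\preceq1$, $\|\sum_{j=1}^{n-1}e^{-j\dt\Lms}\|_{0}\preceq t_{n}\preceq1$, $\|\phi(-\dt\Lms)\|_{0}\preceq1$ from Lemma~\ref{lem:hochbruck2005explicit}, each $\tau$- or $s$-integration simply producing a power of $\dt$. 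The only term that feeds in spatial error is the nonlinear mismatch $\int e^{-(\cdot)\Lms}\{\Pmsf(\ums)-\Pmsf(u)\}\,d\tau$, and securing the $H^{2}$ rate there (rather than $H$) is where the work lies.

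The hard part will be exactly this nonlinear term. Lemma~\ref{lem:local-f} bounds $\|f(v)-f(w)\|_{0}$ only by $\|v-w\|_{a}$, which would contaminate the $\L^{2}$ estimate with the suboptimal $\H^{1}$ rate $H\Lambda^{-1/2}\k_{\min}^{-1/2}$; this cannot be absorbed into $(\dt)^{q}+H^{2}\Lambda^{-1}\k_{\min}^{-1}$ for small $H$. To recover the $H^{2}$ rate I would instead use an $\L^{2}$-type local Lipschitz bound $\|f(v)-f(w)\|_{0}\preceq\|v-w\|_{0}$ valid for $v,w$ in a bounded $\L^{\infty}$-strip around $u(t)$: under Assumption~\ref{asp:01} with $d\leq3$ one has $|f'|\preceq1$ on bounded sets, so this holds once $\ums(t)$, $\umsn$ and the internal stages $U_{\ms}^{n,i}$ are known to remain uniformly bounded in $\L^{\infty}$. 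Obtaining that uniform $\L^{\infty}$ control — from $\|u(t)\|_{\L^{\infty}}\preceq1$ (Assumption~\ref{asp:03}) together with the already-proved $\H^{1}$ bounds of Theorems~\ref{thm:estiH1} and \ref{thm:RK1dtH}/\ref{thm:RK22dt2H}, a Sobolev embedding/inverse estimate on $\Vms$ where needed, and a short induction on $n$ — is the technical crux. Granting it, the mismatch term is $\preceq\|(\ums-u)(t)\|_{0}\preceq H^{2}\Lambda^{-1}\k_{\min}^{-1}$ by Theorem~\ref{thm:estiL2}, while the argument-error terms $\|f(\umsnm)-f(u(t_{n-1}))\|_{0}$ and $\|f(U_{\ms}^{nj})-f(\ums(t_{n-1}+c_{j}\dt))\|_{0}$ become $\preceq\|\enm\|_{0}$, $\|E^{nj}\|_{0}$ up to an additive $H^{2}\Lambda^{-1}\k_{\min}^{-1}$ (also invoking the $\L^{2}$-version of Lemma~\ref{lem:RK22Eji}, which uses \eqref{eq:thetaL2} in place of \eqref{eq:thetah1}).

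Finally I would assemble the recursion as in the proofs of Theorems~\ref{thm:RK1dtH} and \ref{thm:RK22dt2H}: inserting the $\L^{2}$ defect bounds and Lipschitz estimates into \eqref{eq:err-en} (and, for the two-stage scheme, into \eqref{eq:err-Eni}), then doing recursive substitution. Because no $\Lms^{1/2}$ appears, there is no $t^{-1/2}$-weighted kernel, and one lands on the standard form
\[
\|\en\|_{0}\preceq\dt\sum_{j=1}^{n-1}\|\varepsilon^{j}\|_{0}+(\dt)^{q}+H^{2}\Lambda^{-1}\k_{\min}^{-1}.
\]
A discrete Gronwall inequality yields $\|\en\|_{0}\preceq(\dt)^{q}+H^{2}\Lambda^{-1}\k_{\min}^{-1}$, with $q=1$ for \eqref{eq:ms-RK1} and $q=2$ for \eqref{eq:ms-RK22}; combining with the triangle split and Theorem~\ref{thm:estiL2} gives the two claimed bounds.
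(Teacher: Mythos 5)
The paper does not actually write out a proof of Theorem~\ref{thm:RK22dt2H2}; it only remarks that ``similar computation'' to Theorems~\ref{thm:RK1dtH} and \ref{thm:RK22dt2H} gives the $\L^{2}$ bounds. Your proposal is exactly that computation, with the same decomposition (triangle inequality against the semi-discrete solution, Theorem~\ref{thm:estiL2} for the spatial part, the recurrences \eqref{eq:err-Eni}--\eqref{eq:err-en} plus $\L^{2}$ analogues of Lemmas~\ref{lem:Hdt} and \ref{lem:RK22Eji} and discrete Gronwall for the temporal part), so in structure you are on the paper's intended route. Where you genuinely add something is in isolating the one step where ``similar'' is not automatic: Lemma~\ref{lem:local-f} only gives $\|f(v)-f(w)\|_{0}\preceq\|v-w\|_{a}$, and feeding that into the mismatch term $\Pmsf(\ums)-\Pmsf(u)$ would pollute the $\L^{2}$ estimate with the $\H^{1}$ rate $H\Lambda^{-1/2}\k_{\min}^{-1/2}$ instead of $H^{2}\Lambda^{-1}\k_{\min}^{-1}$. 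Your remedy --- an $\L^{2}$-to-$\L^{2}$ local Lipschitz bound obtained from uniform $\L^{\infty}$ control of $\ums(t)$, $\umsn$ and the stages --- is the right idea, and it is worth noting that the paper itself silently uses such a bound already in the proof of Theorem~\ref{thm:estiL2} (it cites Lemma~\ref{lem:local-f} but then writes $\|f(u)-f(\ums)\|_{0}\preceq\|u-\ums\|_{0}$), so your version is if anything more honest than the source. The one place your argument is still only a sketch is precisely that $\L^{\infty}$ boundedness: under Assumption~\ref{asp:01} a H\"older argument gives Lipschitz continuity only into $\H^{1}$-type norms, an inverse estimate from the known $\L^{2}$/$\H^{1}$ error bounds to $\L^{\infty}$ costs negative powers of the fine mesh size $h$ (not $H$), and maximum-norm estimates for CEM-GMsFEM spaces in high-contrast media are not off-the-shelf; so the induction you ``grant'' is the real crux and would need to be carried out (or imposed as an assumption) for a complete proof. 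Also, your stated per-defect bound $\|\epsilon^{ni,q}\|_{0}\preceq(\dt)^{q+1}+H^{2}\Lambda^{-1}\k_{\min}^{-1}$ should carry a factor $\dt$ on the spatial term (the $s$-integral of the semigroup contributes it); that sharper form is what you implicitly use, and need, to make the summed bound come out as $(\dt)^{q}+H^{2}\Lambda^{-1}\k_{\min}^{-1}$ rather than $H^{2}\Lambda^{-1}\k_{\min}^{-1}/\dt$.
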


\bibliographystyle{plainnat}
\bibliography{references_ei}
\end{document}